\title[Multi-dimensional pencil phenomenon]
    {The multi-dimensional pencil phenomenon for Laguerre heat-diffusion maximal operators $\!^{\ddag}$} 
\author[A{.} Nowak]{Adam Nowak}
\author[P{.} Sj\"ogren]{Peter Sj\"ogren}
\address{
\noindent Adam Nowak \newline
    Institute of Mathematics and Computer Science \newline
    Wroc\l{}aw University of Technology \newline
    Wyb{.} Wyspia\'nskiego 27
    PL--50--370 Wroc\l{}aw, Poland
    }
\email{\noindent anowak@pwr.wroc.pl}
\address{
\noindent Peter Sj\"ogren \newline
    Mathematical Sciences, 
    University of Gothenburg \newline
    Mathematical Sciences,
    Chalmers University of Technology \newline 
    SE-412 96 G\"oteborg, Sweden
    }
\email{\noindent peters@math.chalmers.se}
\theoremstyle{plain}
\newtheorem{thm}{Theorem}[section]
\newtheorem{lem}[thm]{Lemma}
\newtheorem{prop}[thm]{Proposition}
\theoremstyle{definition}
\theoremstyle{remark}
\newtheorem*{rem*}{Remark}
\theoremstyle{plain}
\def\R{\mathbb R}
\def\N{\mathbb N}
\def\Z{\mathbb Z}
\def\Rset{\mathbb R}
\def\M{\mathcal{H}_{*}^{\alpha}}
\begin{document}

\begin{abstract}
We investigate in detail the mapping properties of the maximal operator associated with 
the heat-diffusion semigroup corresponding to expansions with respect to multi-dimensional 
standard Laguerre functions $\mathcal{L}^{\alpha}_k$. Our interest is focused on the situation
when at least one coordinate of the type multi-index $\alpha$ is smaller than $0$. 
For such parameters $\alpha$ the Laguerre semigroup does not satisfy the general theory
of semigroups, and the behavior of the associated maximal operator on $L^p$ spaces is found
to depend strongly on both $\alpha$ and the dimension.
\end{abstract}

\maketitle

\footnotetext{ 
${\ddag}$ This paper was published as Preprint 2007:35, Department of Mathematical Sciences, 
    	Chalmers University of Technology and University of Gothenburg.

\emph{\noindent 2000 Mathematics Subject Classification:} primary 42C10; secondary 42B25\\
%42C10 - Fourier series in special orthogonal functions
%42B25 - Fourier analysis in several variables; maximal functions, Littlewood-Paley theory
\emph{Key words and phrases:} maximal operators, Laguerre semigroups, 
	standard Laguerre expansions. 
	
	The first-named author was supported in part by MNiSW Grant N201 054 32/4285.
}

%%%%%%%%%%%%%%%%%%%%%%%%%%%%%%%%%%%%%%%%%%%%%%%%%%%%%%%%%%%%%%%%%%%%%%%%%%%%%%%%%%%%%%%%%%%%%%%%%%%%%
\section{Introduction}
\label{sec:intro}
%%%%%%%%%%%%%%%%%%%%%%%%%%%%%%%%%%%%%%%%%%%%%%%%%%%%%%%%%%%%%%%%%%%%%%%%%%%%%%%%%%%%%%%%%%%%%%%%%%%%%

Maximal operators play an important role in the theory of semigroups of operators.
In particular, their mapping properties are directly connected with the boundary
behavior of the semigroups. In this article we perform an extensive study of
the multi-dimensional maximal operator associated with a semigroup existing in 
the literature, but not covered by the general theory of Stein's monograph 
\cite{St}. The description we obtain is sharp and a bit unexpected. 
Certain endpoint results depend on the dimension of the underlying space; the situation 
changes drastically when one passes to dimension $4$ and higher.

The purpose of the paper is twofold. It provides results in the particular setting
of standard Laguerre expansions. But it also suggests methods and tools, and intuition,
for similar questions in other classical settings where the general theory does not apply.
This concerns for instance expansions in Laguerre functions of Hermite type and in 
ultraspherical or Jacobi ``orthonormalized''
polynomials, as well as certain Fourier-Bessel settings.    

Let $\{T_t^{\alpha}\}_{t > 0}$ be the heat-diffusion semigroup related to expansions with
respect to the $d$-dimensional standard Laguerre functions $\mathcal{L}_k^{\alpha}$
of type $\alpha \in (-1,\infty)^d$ (see Section \ref{sec:prel} for the relevant definitions).
The main object of our study is the family of maximal operators
$$
T^{\alpha}_{*}f = \sup_{t>0}|T^{\alpha}_t f|, \qquad \alpha \in (-1,\infty)^d.
$$
It is known that for $\alpha \in [0,\infty)^d$ the behavior of $T_*^{\alpha}$ is standard.
In fact we have the following result.
\begin{thm}[\cite{Stem},\cite{NoSj}] \label{StNoSj}
Let $d\ge 1$ and $\alpha \in [0,\infty)^d$. Then $T^{\alpha}_*$ is bounded on $L^p(\R^d_+)$
for $1<p\le \infty$ and of weak type $(1,1)$.
\end{thm}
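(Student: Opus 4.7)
The plan is to exploit the product structure of the Laguerre semigroup, the positivity of its kernel for $\alpha \in [0,\infty)^d$, and standard Calder\'on--Zygmund machinery adapted to $\R^d_+$. First, I would invoke the Mehler-type formula for standard Laguerre functions to represent $T_t^{\alpha}$ as an integral operator with kernel $G_t^{\alpha}(x,y)=\prod_{i=1}^d G_t^{\alpha_i}(x_i,y_i)$, where each factor is expressed via the modified Bessel function $I_{\alpha_i}$. Since $I_{\alpha_i}\ge 0$ for $\alpha_i\ge 0$, the whole kernel is nonnegative, and a short calculation gives $T_t^{\alpha} 1\le 1$, so the operator is Markovian. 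Combined with the self-adjointness on $L^2$ that follows from the spectral decomposition in the orthonormal basis $\{\mathcal{L}_k^{\alpha}\}$, this makes $\{T_t^{\alpha}\}$ a symmetric diffusion semigroup in the sense of Stein.

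Consequently, Stein's general maximal theorem for symmetric diffusion semigroups immediately yields boundedness of $T_*^{\alpha}$ on $L^p(\R^d_+)$ for $1 < p \le \infty$, with the $L^{\infty}$ estimate being a trivial consequence of the Markov property. For the weak type $(1,1)$ endpoint, which is not covered by Stein's framework, the strategy is to decompose $G_t^{\alpha}(x,y)$ into a local part, supported where $|x-y|$ is small compared with $|x|+|y|$, and a global, off-diagonal part. On the local region the euclidean metric together with Lebesgue measure make $\R^d_+$ a space of homogeneous type, and the corresponding piece of the maximal kernel is pointwise dominated by a constant multiple of the ordinary Hardy--Littlewood maximal operator, which is of weak type $(1,1)$. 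On the global region one controls $\sup_{t>0}G_t^{\alpha}(x,y)$ by locating the critical $t=t(x,y)$ that maximizes the kernel and applying the asymptotics of $I_{\alpha_i}$ both at $0$ and at $\infty$, together with the exponential factor $e^{-t}$; the resulting envelope is then shown to be integrable in one variable uniformly in the other, which gives the required weak-type estimate.

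The main obstacle is the sharp pointwise control of the off-diagonal maximal kernel. The integral representation of $G_t^{\alpha}(x,y)$ features competing exponential factors, and one has to balance them carefully to identify the correct critical $t$ and avoid losses at the transition points where $I_{\alpha_i}$ changes regime. The tensor structure is helpful in reducing many of the one-dimensional estimates to Stempak's analysis, but in passing to $d\ge 2$ one still needs to combine the local and global components coordinate by coordinate in such a way that the constants do not compound and destroy integrability of the global envelope.
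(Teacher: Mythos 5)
The paper does not prove this theorem from scratch; it is cited from Stempak \cite{Stem} (dimension one) and Nowak--Sj\"ogren \cite{NoSj} (weak type $(1,1)$ in higher dimensions), with the remark that the multi-dimensional $L^p$ boundedness for $p>1$ follows by iterating Stempak's one-dimensional estimate coordinatewise, or equivalently by interpolating the $L^\infty$ bound of \cite[(3.7)]{Stem} against the weak $(1,1)$ bound of \cite{NoSj}. Your route is genuinely different, and partly more ambitious, but it has two soft spots worth flagging.

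First, your invocation of Stein's symmetric diffusion semigroup theorem for the range $1<p\le\infty$ rests on the claim that ``a short calculation gives $T_t^{\alpha}1\le 1$.'' This is not short. For $\alpha=0$ one can indeed carry the Laplace transform of the Bessel factor to a closed form and find $T_t^{0}1(\xi)=(\cosh(t/2))^{-1}e^{-\xi\tanh(t/2)/2}\le 1$, but for general $\alpha_i>0$ the analogous integral $\int_0^\infty u\,e^{-pu^2}I_{\alpha_i}(bu)\,du$ is a confluent hypergeometric expression, and the submarkovian property must be verified rather than asserted. The paper itself only records that $\sup_{t>0}\int\mathcal{H}_t^{\alpha_i}(\xi,\eta)\,d\eta\le C$ and explicitly warns that a constant $C$ was omitted in \cite{Stem}; this is an $L^\infty$ bound, not an $L^\infty$ contraction, and it is precisely what interpolation needs but not what Stein's theorem needs. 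If the contraction on $L^\infty$ (equivalently on $L^1$) were to fail, your appeal to Stein's monograph would collapse, whereas the paper's interpolation route is immune: once the weak type $(1,1)$ and the $L^\infty$ bound are in hand, the full range $1<p<\infty$ follows with no Markov hypothesis.

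Second, for the weak type $(1,1)$ endpoint your local--global split is the right idea and is essentially what \cite{NoSj} carries out, but what you describe is an outline of the hard part, not a proof. In particular, the claim that the global envelope is ``integrable in one variable uniformly in the other'' amounts to asserting that the global piece of the maximal kernel is $L^1$-bounded; this must be extracted from estimates of the type given in Lemma \ref{upper} here, and, as you yourself note, in $d\ge 2$ the coordinatewise combination threatens to blow up the constants. Controlling that is exactly the content of \cite{NoSj}, so the sketch as written does not close the argument.
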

In dimension one this was proved by Stempak \cite{Stem}, and
the multi-dimensional generalisation for $p>1$ is an easy consequence of Stempak's
result. The weak type $(1,1)$ in higher dimensions was obtained recently by 
the authors \cite{NoSj}. Note that $T^{\alpha}_*$ is not bounded on $L^1(\R^d_+)$.

However, when some $\alpha_i <0$, the $\mathcal{L}^{\alpha}_k$ are not in $L^p(\R^d_+)$
for all $1<p<\infty$. This suggests that $T^{\alpha}_t f$ is not
defined on $L^p(\R^d_+)$ for all $p$. 
To explain this phenomenon in greater detail, assume for clarity that $d=1$.
Let $G_t^{\alpha}(\xi,\eta)$ be the integral kernel of the semigroup $T_t^{\alpha}$
in dimension $1$.
This kernel can be expressed explicitly in terms of a modified Bessel function; see Section 
\ref{sec:prel}. In particular, $G_t^{\alpha}(\xi,\eta)$ behaves for each fixed $t>0$ like
$\xi^{\alpha\slash 2}\eta^{\alpha\slash 2}$ as $\xi,\eta \to 0^+$. The requirement that
$$
T_t^{\alpha}f(\xi) = \int G_t^{\alpha}(\xi,\eta) f(\eta)\, d\eta
$$
exists and is in $L^p(\R^d_+)$ for $f \in L^p(\R^d_+)$ implies, roughly speaking,
that $G^{\alpha}_t(\xi,\eta)$ is in $L^{p'}(d\eta)$ and in $L^p(d\xi)$, where
$1\slash p + 1\slash p' =1$. For $-1<\alpha<0$ this happens precisely when 
$|2\slash p -1| < \alpha+1$ or, equivalently,
$$
p_0(\alpha) < p < p_1(\alpha) \qquad \textrm{where} 
	\qquad p_1(\alpha) = - \frac{2}{\alpha} \qquad \textrm{and} \qquad
	p_0(\alpha) = p_1(\alpha)' = \frac{2}{2+\alpha}.
$$
Now two basic questions arise naturally: 
\begin{itemize}
\item[1)] Is $T^{\alpha}_*$ bounded on 
$L^p(\R_+)$ when $p_0 < p < p_1$? 
\item[2)]  Precisely what happens at the endpoints $p_0$ and $p_1$?
\end{itemize}
Both problems were studied recently by Mac\'{\i}as, Segovia and Torrea \cite{MST}, who proved
the following one-dimensional result (see also Figure \ref{fig:pencil} below).
\begin{thm}[\cite{MST}] \label{MaSeTo}
Let $d=1$ and $\alpha \in (-1,0)$. Then $T^{\alpha}_*$ is bounded on $L^p(\R_+)$ for
$p_0<p<p_1$, of weak type $(p_1,p_1)$ and of restricted weak type $(p_0,p_0)$.
Moreover, these results are sharp in the sense that $T^{\alpha}_*$ is neither bounded on
$L^{p_1}(\R_+)$ nor of weak type $(p_0,p_0)$ nor of restricted weak type $(p,p)$ for 
$p \notin [p_0,p_1]$.
\end{thm}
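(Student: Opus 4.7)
The plan is to analyse $T_*^\alpha$ through the explicit integral kernel of $T_t^\alpha$, which in one dimension is expressible via the modified Bessel function $I_\alpha$. Using the asymptotics $I_\alpha(z)\sim c_\alpha z^{\alpha}$ as $z\to 0^+$ and $I_\alpha(z)\sim(2\pi z)^{-1/2}e^{z}$ as $z\to\infty$, I would split $G_t^\alpha$ into two pieces according to the size of the Bessel argument $z_t(\xi,\eta) \asymp \xi\eta/(1-e^{-2t})$. On the region where this argument is bounded below, $G_t^\alpha$ is pointwise dominated by a Laguerre heat kernel with type parameter $0$, so by Theorem~\ref{StNoSj} the corresponding maximal operator is bounded on $L^p(\R_+)$ for every $1<p\le\infty$. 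On the complementary region the Bessel asymptotic combines with the normalisation prefactors to yield
\begin{equation*}
    G_t^{\alpha}(\xi,\eta)\;\lesssim\; \xi^{\alpha/2}\eta^{\alpha/2}\,\Psi_t(\xi,\eta),
\end{equation*}
with $\Psi_t$ a positive kernel having Gaussian-type localisation jointly in $\xi$ and $\eta$; this singular piece carries the entire obstruction at the endpoints.

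To handle the singular piece I would dominate the corresponding maximal operator pointwise by operators of Hardy type. Concretely, after taking the supremum in $t$, the kernel $\xi^{\alpha/2}\eta^{\alpha/2}\Psi_t$ is dominated by a sum of kernels of the form $\xi^{\alpha/2}\eta^{\alpha/2}K(\xi,\eta)$ where $K$ is a localised averaging kernel on $\R_+$. Boundedness in $L^p$ then reduces to two conditions: the output factor $\xi^{\alpha/2}$ must lie locally in $L^p$ near the origin (forcing $p<p_1$) and the input factor $\eta^{\alpha/2}$ must lie locally in $L^{p'}$ near the origin (forcing $p>p_0$). At the upper endpoint $p=p_1$ the output factor lies in $L^{p_1,\infty}$ rather than in $L^{p_1}$, yielding weak type $(p_1,p_1)$. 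At the lower endpoint $p=p_0$ the pairing $f\mapsto\int\eta^{\alpha/2}f(\eta)\,d\eta$ fails on general $L^{p_0}$ functions but remains finite on characteristic functions of sets of finite measure, with $\int_0^{|E|}\eta^{\alpha/2}d\eta = c|E|^{1/p_0}$; a Kolmogorov-type level-set computation then produces restricted weak type $(p_0,p_0)$.

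For the sharpness statements I would use test functions concentrated near the origin. With $f_\epsilon=\chi_{(0,\epsilon)}$, evaluating $T_t^\alpha f_\epsilon(\xi)$ at $t\asymp 1$ and $\xi$ small gives, from the leading behaviour of the kernel, the lower bound $T_*^\alpha f_\epsilon(\xi)\gtrsim\epsilon^{1+\alpha/2}\xi^{\alpha/2}$ on a fixed small interval near $0$. Since $\xi^{\alpha/2}\notin L^{p_1}(\R_+)$ locally, this rules out strong type $(p_1,p_1)$; a refined choice of $\epsilon$ and a direct level-set estimate rules out weak type $(p_0,p_0)$. For $p$ outside $[p_0,p_1]$, suitable rescaled versions of $f_\epsilon$, or a dual test concentrated at infinity combined with the $p\leftrightarrow p'$ symmetry between the endpoints, destroys even restricted weak type $(p,p)$.

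The main obstacle will be the precise control of the singular piece: extracting the factorisation $\xi^{\alpha/2}\eta^{\alpha/2}\Psi_t$ uniformly in $t>0$ and dominating the resulting maximal operator by manageable Hardy-type operators requires careful region-by-region estimates that correctly interpolate between the small-Bessel-argument regime and the complementary one. A more subtle point is that the asymmetric endpoint pair (weak at $p_1$ versus only restricted weak at $p_0$) reflects the nearly rank-one structure of the singular kernel at the origin and cannot be recovered by Marcinkiewicz interpolation; it must be proved by direct distribution-function computations at each endpoint, with the restricted weak-type at $p_0$ being the most delicate step.
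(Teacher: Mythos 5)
Your positive-direction strategy is essentially the one used in the paper. The decomposition of the kernel by the size of the Bessel argument is exactly Lemma \ref{upper}: the large-argument piece yields a Gaussian-type kernel dominated by a Hardy--Littlewood-type maximal function (you phrase it as domination by the $\alpha=0$ Laguerre kernel, which is equivalent in spirit), and the small-argument piece factorizes as $(\xi\eta)^{\alpha/2}$ times a localising kernel. Proposition \ref{estimate} then extracts precisely the two endpoint estimates you describe. One caution: to get a clean restricted weak-type $(p_0,p_0)$ bound from the singular term, you should not stop at the factorization $\xi^{\alpha/2}\eta^{\alpha/2}\Psi_t$ with $\xi^{\alpha/2}=\xi^{-1/p_1}$ on the output. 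The paper instead regroups the powers of $t$ so that the output variable carries $\xi^{-1/p_0}$, absorbing $(\xi/t)^{\alpha+1}$ into the exponential; this makes the level-set estimate $\lambda^{p_0}|\{\cdot>\lambda\}|\lesssim\|f\|_{p_0,1}^{p_0}$ immediate. Your version can be pushed through too, using the $e^{-c\xi}$ localisation to control the level set for $\lambda<|E|^{1/p_0}$, but it is then no longer the one-line Kolmogorov computation you suggest.

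There is a genuine gap in the sharpness argument for weak type $(p_0,p_0)$. You propose to disprove weak type $(p_0,p_0)$ with $f_\epsilon=\chi_{(0,\epsilon)}$ via ``a refined choice of $\epsilon$ and a direct level-set estimate.'' This cannot work: $f_\epsilon$ is an indicator function, and the theorem you are proving asserts that $T^\alpha_*$ \emph{is} of restricted weak type $(p_0,p_0)$, so every indicator satisfies $|\{T_*^\alpha\chi_E>\lambda\}|\le C|E|/\lambda^{p_0}$, which is precisely the weak-type $(p_0,p_0)$ inequality for $\chi_E$. No indicator, for any choice of $\epsilon$, can violate it. You must use a function in $L^{p_0}\setminus L^{p_0,1}$. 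The relevant idea (cf.\ Section \ref{sharp_P0} of the paper) is to use Lemma \ref{lower}(b), giving $T_1^\alpha f(\xi)\gtrsim\xi^{-1/p_1}\int_0^1\eta^{-1/p_1}f(\eta)\,d\eta$ for $\xi\in(0,1)$, and then pick a nonnegative decreasing $f\in L^{p_0}(0,1)$ with $\int_0^1\eta^{-1/p_1}f(\eta)\,d\eta=\infty$ but $f\notin L^{p_0,1}$, e.g.\ $f(\eta)=\eta^{-1/p_0}\big[\log(e/\eta)\big]^{-1}$; then $T_1^\alpha f\equiv\infty$ on $(0,1)$, ruling out weak type $(p_0,p_0)$ (and more generally any bound on $L^{p_0}$). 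The test function $\chi_{(0,\epsilon)}$, or the paper's $\chi_{(1/2,1)}$, is adequate only for showing failure of strong type $(p_1,p_1)$ and, after rescaling, of (restricted) weak type $(p,p)$ for $p>p_1$; for $p<p_0$ you indeed need a dual construction, and for the $p_0$-endpoint itself you need a non-indicator.
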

Recall that restricted weak type $(p_0,p_0)$ means that the inequality
$$
|\{x\in \R_+ : T^{\alpha}_*\chi_{E}(x)>\lambda\}| \le C \frac{|E|}{\lambda^{p_0}}, 
\qquad \lambda >0,
$$
holds (with a fixed $C$) for all sets $E \subset \R_+$ of finite measure. It is well known 
that this is equivalent to saying that $T_*^{\alpha}$ is bounded from $L^{p_0,1}(\R_+)$ to
the weak-type space $L^{p_0,\infty}(\R_+)$. Here the Lorentz space $L^{p_0,1}$ is equipped with the norm
$$
\|f\|_{p_0,1} = \int_0^{\infty} f^*(s) s^{1\slash p_0} \, \frac{ds}{s},
$$
and $f^*$ is the decreasing rearrangement of $f$ on $\R_+$.
The fact that this is indeed a norm is verified in \cite[Theorem 4.3 p.\,218]{BeSh}.

Thus, in one dimension, the study of the $L^p$ mapping properties of $T_*^{\alpha}$ is
complete for the full range $\alpha \in (-1,\infty)$. The results can be summarized
graphically, see Figure \ref{fig:pencil}.
% ********************* FIGURE (PENCIL)
\begin{figure} [ht]
\includegraphics[width=0.6\textwidth]{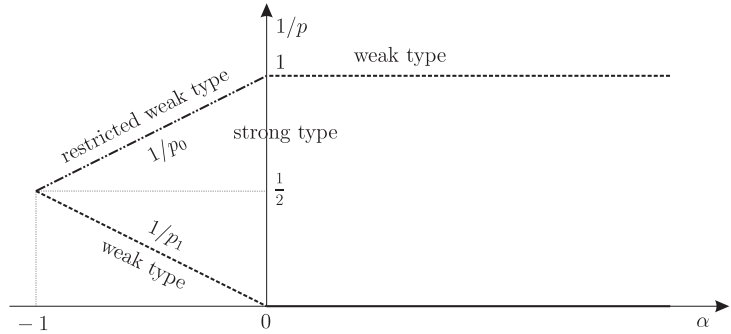}
\caption{The pencil phenomenon in one dimension.}\label{fig:pencil}
\end{figure}
% ********************* FIGURE
In particular, the shape of a pencil appears, and this justifies the phrase
\emph{pencil phenomenon} sometimes used to describe the $L^p$ behavior of $T^{\alpha}_*$. 

The purpose of our present research is to study the mapping properties 
of $T_*^{\alpha}$ in arbitrary finite dimension $d$ and for any $\alpha \in (-1,\infty)^d$. 
The methods used in \cite{MST} are inadequate in higher dimensions.
The comprehensive, sharp result we establish, Theorem \ref{main_thm} below, turns out to be
rather intricate and unexpected.

Denote  
$$
\widetilde{\alpha} = \min_{1\le j \le d} \alpha_j, \qquad
\widetilde{d}(\alpha)=\#\big\{1\le i \le d : \alpha_i = \widetilde{\alpha}\big\},
\qquad p_1 = p_1(\widetilde{\alpha}), \quad p_0=p_0(\widetilde{\alpha}).
$$

\begin{thm} \label{main_thm}
Let $d \ge 1$, $\alpha \in (-1,\infty)^d$ and assume that $-1 < \widetilde{\alpha} < 0$.
\begin{itemize}
\item[(a)]
	If $\widetilde{d}(\alpha)=1$, then the results for $d=1$ remain valid for any $d$:
		\begin{itemize}
			\item[(a1)]
				If $p_0 < p < p_1$, then $T_*^{\alpha}$ is bounded on $L^p(\R^d_+)$.
			\item[(a2)]
				$T_*^{\alpha}$ is of weak type $(p_1,p_1)$.
			\item[(a3)]
				$T_*^{\alpha}$ is of restricted weak type $(p_0,p_0)$.
		\end{itemize}
\item[(b)]
	Assume now that $\widetilde{d}(\alpha) \ge 2$.
		\begin{itemize}
			\item[(b1)] 
				If $p_0 < p < p_1$, then $T_*^{\alpha}$ is bounded on $L^p(\R^d_+)$.
			\item[(b2)]
				For $d=2$ and $d=3$, $T^{\alpha}_*$ satisfies the logarithmic
				weak-type $(p_1,p_1)$ inequality
						$$
							\big|\{T_*^{\alpha}f > \lambda\}\big| \le C \frac{\|f\|_{p_1}^{p_1}}{\lambda^{p_1}}
								\bigg[ \log \bigg( 2 + \frac{\lambda}{\|f\|_{p_1}} 
									\bigg) \bigg]^{\widetilde{d}(\alpha)-1}, \qquad \lambda >0,
						$$ 
						for $f \in L^{p_1}(\R^d_+)$.
				But for $d \ge 4$, there exists an $f \in L^{p_1}(\R^d_+)$ such that
						$$
							\big|\{T_*^{\alpha}f > \lambda\}\big| = \infty
						$$
						for all $\lambda>0$. This $f$ can actually be taken in the
						smaller space $L^{p_1,1}(\R^d_+)$.
			\item[(b3)]
				For $d=2$ and $d=3$, $T_*^{\alpha}$ satisfies the logarithmic restricted 
				weak-type $(p_0,p_0)$ estimate
						$$
							\big|\{T^{\alpha}_* \chi_E > \lambda \}\big| \le C \frac{|E|}{\lambda^{p_0}}
								\bigg[ \log\bigg(2+ \frac{1}{|E|}\bigg) 
									\bigg]^{\frac{p_0}{p_1}(\widetilde{d}(\alpha)-1)}, \qquad \lambda > 0,
						$$
					for all measurable sets $E \subset \R^d_+$ of finite measure.
				But for $d \ge 4$
				this estimate does not hold, even if the exponent of the logarithmic factor is
				arbitrarily increased.
		\end{itemize}	
\end{itemize}
\end{thm}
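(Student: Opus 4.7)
\emph{Reduction to one-dimensional operators.}
The heat kernel factorizes as $G_t^{\alpha}(\xi,\eta)=\prod_{i=1}^d G_t^{\alpha_i}(\xi_i,\eta_i)$, and the semigroup parameter $t$ is shared across coordinates, so positivity yields the pointwise bound
$$
T_*^{\alpha}f(\xi) \le \widetilde{T}_*^{\alpha_1}\widetilde{T}_*^{\alpha_2}\cdots \widetilde{T}_*^{\alpha_d}|f|(\xi),
$$
where $\widetilde{T}_*^{\alpha_i}$ is the one-dimensional maximal operator from Theorems \ref{StNoSj} and \ref{MaSeTo} acting only in the $i$th variable. By Fubini, the one-dimensional strong-type and weak-type $L^p$ bounds transfer to $L^p(\R^d_+)$ in each coordinate. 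Since $\alpha_i \ge \widetilde{\alpha}$ forces $(p_0,p_1)\subset (p_0(\alpha_i),p_1(\alpha_i))$, parts (a1) and (b1) follow at once by composing the $L^p$ estimates.

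\emph{Endpoints with a unique worst coordinate (a2), (a3).}
When $\widetilde{d}(\alpha)=1$, exactly one factor $\widetilde{T}_*^{\widetilde{\alpha}}$ is merely of weak (or restricted weak) type at the endpoint; the remaining factors are strong-type $(p_1,p_1)$, respectively strong-type on an open interval around $p_0$ and hence bounded on the Lorentz space $L^{p_0,1}$ by real interpolation. For (a2), applying the strong factors to $f\in L^{p_1}$ first, and then the endpoint weak factor, directly yields weak type $(p_1,p_1)$. For (a3) we argue in $L^{p_0,1}$, using the equivalence of restricted weak type with $L^{p_0,1}\to L^{p_0,\infty}$ boundedness.

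\emph{Logarithmic endpoint estimates for $d=2,3$ in (b2), (b3).}
When $\widetilde{d}(\alpha)\ge 2$ several factors are only of weak type at the endpoint, and the naive composition argument collapses since the output of one weak-type operator is not in the $L^{p_1}$ domain of the next. We therefore analyze $T_*^{\alpha}$ more directly, combining the interior strong-type $L^p$ bounds on $(p_0,p_1)$ with a Calder\'on--Zygmund layer decomposition of $f$. Each of the $\widetilde{d}(\alpha)$ critical coordinates contributes one logarithmic loss at the passage to the endpoint, producing the exponent $\widetilde{d}(\alpha)-1$, in the spirit of the Jessen--Marcinkiewicz--Zygmund inequality for the strong maximal function. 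The restricted weak-type version (b3) proceeds analogously by testing on $\chi_E$ and tracking level sets.

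\emph{Failure in $d \ge 4$: the main obstacle.}
The counterexamples must be produced directly for $T_*^{\alpha}$, since the iterated upper bound is not tight at this level of precision. The plan is to construct $f$ as a discrete sum $\sum_k c_k \chi_{R_k}$ of characteristic functions of small anisotropic rectangles clustering near the origin in the critical coordinates, with coefficients $c_k$ tuned to keep $f$ in $L^{p_1,1}(\R^d_+)$ while exploiting the near-origin behavior $G_t^{\alpha_i}(\xi_i,\eta_i)\sim (\xi_i\eta_i)^{\alpha_i/2}$ of each one-dimensional kernel for small $t$. For a suitable choice $t=t_k(\xi)$, each term $c_k\chi_{R_k}$ forces $T_{t}^{\alpha}f(\xi)$ above $\lambda$ on a set of positive measure, and the extra room available when $d\ge 4$ allows these sets to be arranged so that their union has infinite measure. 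The restricted weak-type counterexample in (b3) is built by the dual construction, selecting the sets $E$ together with time parameters so that $T_*^{\alpha}\chi_E$ exceeds $\lambda$ on arbitrarily large sets relative to $|E|$, defeating any logarithmic factor. The sharpness of the dichotomy between $d=3$ and $d=4$---a delicate competition between the weighted mass near the origin and the volume of the permissible supports---is the hard part.
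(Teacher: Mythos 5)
Your reduction to the one-dimensional kernels and the treatment of (a1), (b1), (a2), (a3) by composing coordinate maximal operators matches the paper's approach and is correct. The gaps lie in (b2) and (b3), both in the positive results and in the counterexamples, and they are serious.

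For the positive endpoint estimates in $d=2,3$, your plan (Calder\'on--Zygmund layers plus ``one log per critical coordinate'' by analogy with Jessen--Marcinkiewicz--Zygmund) is not a proof; crucially, it does not explain why the argument works for $d\le 3$ and stops working at $d=4$. The paper's mechanism is entirely different and makes this dichotomy transparent: one integrates the one-dimensional two-term bound \eqref{main} coordinate by coordinate, indexing the resulting $2^d$ terms by the subset $D'\subseteq\{1,\dots,d\}$ of coordinates where the ``bad'' (near-origin) term is taken. The terms with $d'=|D'|\le 1$ are of genuine weak type; the terms with $d'>d''=d-d'$ can be controlled by a level-set estimate for $\prod_{j\in D'} x_j^{-1}$ (the content of Lemma \ref{level}), which is precisely where the power $[\log(\cdot)]^{d'-1}$ arises; and the term $D'=\emptyset$ is trivial. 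For $d\le 3$ these three cases exhaust all $D'$, but starting at $d=4$ there exist subsets with $2\le d'\le d''$, which cannot be controlled, and these are exactly the configurations used to build the counterexamples. Without this (or an equivalent) combinatorial bookkeeping of which coordinates carry the near-origin singularity, your sketch neither produces the stated exponent $\widetilde{d}(\alpha)-1$ nor isolates the $d=3$ vs.\ $d=4$ transition; it is not clear a CZ layer decomposition of $f$ would even get started, since the enemy is the kernel's singularity at the spatial boundary, not the size distribution of $f$.

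For the counterexamples in $d\ge 4$, your heuristic (anisotropic rectangles near the origin, exploiting $G_t^{\alpha_i}(\xi_i,\eta_i)\sim(\xi_i\eta_i)^{\alpha_i/2}$) is on the right track, but the proposal omits the two ingredients that make the construction actually close. First, the supports must be chosen so that some of the coordinates (say $d''$ of them) sit far from the origin, at scale $t^{-1}$, and the complementary $d'$ coordinates sit at scale $t$; one needs $d'\ge 2$ to invoke the sharpness of the level-set lemma (the factor $[\log]^{d'-1}$ with $d'\ge 2$), and one needs $d''\ge d'$ so that the contributions from different scales $t$ do not overlap destructively — hence exactly $d\ge 4$. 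Second, the critical case $d'=d''$ (so in particular $d=4$) does not follow from a single $t$: the paper uses a union of such rectangles over a geometric range of scales (the set $E_R$, or the family $F_N$ for (b3)), and normalizes against $\log R$, which is precisely why $d=4$ is delicate. Your sketch acknowledges that this is ``the hard part'' but does not provide the scale decomposition or the normalization; as written it would establish the failure only for $d\ge 5$, not for $d=4$, and therefore would not complete the theorem.
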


The boundedness properties in (b2) and (b3) can be stated in a compact way in
terms of function spaces. More precisely, the estimate in (b2) is equivalent to
the boundedness of $T_*^{\alpha}$ from $L^{p_1}(\R^d_+)$ to the weak-type Orlicz space
$L^{p_1,\infty}\log^{-N\slash p_1} L$, with $N=\widetilde{d}(\alpha)-1$, 
defined to consist of those measurable functions $f$ for which the quasinorm
$$
\|f\|_{L^{p_1,\infty}\log^{-N\slash p_1} L} = \inf\Big\{ \eta > 0 :
\sup_{\lambda>0}  \lambda \log^{-N\slash p_1}(2+\lambda) \,
\big|\{x \in \R^d_+ : |f(x)| \slash \eta > \lambda\}\big|^{1\slash p_1} \le 1 \Big\}
$$
is finite. The estimate in (b3) is equivalent to the boundedness of $T^{\alpha}_*$ from
$L^{p_0,1}\log^{N\slash p_1} L$ to $L^{p_0,\infty}(\R^d_+)$, where the Lorentz-Zygmund
space $L^{p_0,1}\log^{N\slash p_1} L$ is defined by means of the quasinorm
$$
\|f\|_{L^{p_0,1}\log^{N\slash p_1} L} =
\int_0^{\infty} f^*(s) s^{1\slash p_0} \log^{N\slash p_1}\Big(2+\frac{1}{s}\Big)\,
	 \frac{ds}{s}.
$$
When $\widetilde{d}(\alpha)=1$, the logarithmic factors disappear and we are reduced to
the classic Lorentz spaces which appear implicitly in (a2) and (a3). 
Thus parts (a) and (b) are consistent.

The sharpness of items (b2) and (b3) for $d=2,3$ is discussed in
Sections \ref{sharp_p1} and \ref{sharp_P0} below. The Orlicz and Lorentz-Zygmund
spaces just defined are found to be best possible here, for large $\lambda$ and small
$s$, respectively. Also (a2) and (a3) are sharp in a similar way. 
No boundedness holds for $p \notin [p_0,p_1]$.

Summarizing the picture, we see that the behavior of $T_*^{\alpha}$ depends
in an essential way on $\alpha$, via the quantities $\widetilde{\alpha}$ and
$\widetilde{d}(\alpha)$. When $\widetilde{\alpha} \ge 0$, the results are standard
(Theorem \ref{StNoSj}). In the opposite case, everything depends on $\widetilde{d}(\alpha)$,
the number of minimal values $\alpha_i$  in the type multi-index 
$\alpha = (\alpha_1,\ldots,\alpha_d)$. 
If there is only one minimal value (notice that this always happens when $d=1$), 
the results are analogous to those obtained earlier in dimension one; see Theorem \ref{MaSeTo}.
Otherwise, the dimension $d$ of the underlying space comes into play.
For $d=2,3$ there are sharp boundary results expressed by means of appropriate function spaces.
But for dimension $4$ and higher, there are no similar endpoint results.
Roughly speaking, it turns out that for $d \ge 4$, but not for smaller $d$, 
there is enough room in the space for counterexamples.
The joint effect of at least two minimal values $\alpha_i$ is then needed.

Finally, we point out that our proof of Theorem \ref{main_thm} contains an argument proving
Theorem \ref{MaSeTo}, shorter than the original proof in \cite{MST}. On the other hand,
we mention that recently Mac\'{\i}as, Segovia and Torrea \cite{MST2}, still working in 
dimension one, obtained sharp power-weighted results in the spirit of Theorem \ref{MaSeTo},
by extending the methods used in \cite{MST}.

This paper is organized as follows. Section \ref{sec:prel} describes the setup and gathers 
basic lemmas providing, in particular, fundamental kernel estimates.
The remaining part of the paper constitutes the proof of Theorem \ref{main_thm}.
Thus Section \ref{sec:strong} treats the strong-type range $p_0<p<p_1$
(items (a1) and (b1) of Theorem \ref{main_thm}), Section \ref{sec:p1} deals with
the endpoint $p_1$ (items (a2) and (b2)) and Section \ref{sec:p0} with the endpoint
$p_0$ (items (a3) and (b3)). Comments on the sharpness of items (a2), (a3), (b2), (b3)
are located in the final parts of Sections \ref{sec:p1} and \ref{sec:p0}.

We shall use the following conventions. By $c>0$ and $C< \infty$ we will always denote
constants whose values may change from one occurrence to another; these constants will
usually depend on the dimension $d$ and the type multi-index $\alpha.$ Any other
dependence will usually be indicated. If $c \le f\slash g \le C$ for some $c$ and $C,$ we will
write shortly $f \simeq g.$ Similarly, we abbreviate $f \le C g$ to $f \lesssim g$ and
write $\gtrsim$ analogously. Given a $d$-tuple $m = (m_1,\ldots,m_d) \in \Rset^d$, 
its length $m_1+\ldots + m_d$ will be denoted by $|m|$ (notice that this quantity may be
negative). For $p \in [1,\infty]$ the norm in the Lebesgue space $L^p=L^p(\R^d_+)$ is denoted
by $\|\cdot\|_p$. We write $p'$ for the conjugate exponent of $p$.

%%%%%%%%%%%%%%%%%%%%%%%%%%%%%%%%%%%%%%%%%%%%%%%%%%%%%%%%%%%%%%%%%%%%%%%%%%%%%%%%%%%%%%%%%%%%%%%%%%%%%
\section{Preliminaries}
\label{sec:prel}
%%%%%%%%%%%%%%%%%%%%%%%%%%%%%%%%%%%%%%%%%%%%%%%%%%%%%%%%%%%%%%%%%%%%%%%%%%%%%%%%%%%%%%%%%%%%%%%%%%%%%

Let $L_k^{\alpha}$ denote the one-dimensional Laguerre polynomials with parameter
$\alpha>-1$, defined on $\R_+ = (0,\infty)$ for $k=0,1,2,\ldots$; cf. \cite[Chapter 4]{Leb}.
We consider the system $\{\mathcal{L}_k^{\alpha}\}$ of \emph{standard Laguerre functions},
given in dimension one by
$$
\mathcal{L}_k^{\alpha}(x) = \bigg(\frac{k!}{\Gamma(k+\alpha+1)}\bigg)^{1\slash 2} L^{\alpha}_k(x)
	 x^{\alpha\slash 2}e^{-x\slash 2}, \qquad x>0, \qquad \alpha >-1, \qquad k \in \N.
$$
The corresponding multi-dimensional systems are formed by taking tensor products.
It is well known that
$\{\mathcal{L}_k^{\alpha} : k \in \N^d\}$ is an orthonormal basis in $L^2(\R^d_+)$,
for any $\alpha \in (-1,\infty)^d$.

Each $\mathcal{L}^{\alpha}_k$ is an eigenfunction of the differential operator
$$
\mathbb{L}^{\alpha} = - \sum_{i=1}^d \bigg(x_i \frac{\partial^2}{\partial x_i^2} +
    \frac{\partial}{\partial x_i} - \frac{x_i}{4} - \frac{\alpha_i^2}{4x_i} \bigg),
$$
which is formally symmetric and positive, and the corresponding eigenvalue is
$|k|+(|\alpha| + d)\slash 2$. Moreover, $\mathbb{L}^{\alpha}$ has a self-adjoint extension
in $L^2(\R^d_+)$ for which the spectral decomposition is given by the $\mathcal{L}^{\alpha}_k$.
Hence, the associated heat semigroup $T_t^{\alpha} = \exp(-t\mathbb{L}^{\alpha})$ 
is defined for $f \in L^2(\R^d_+)$ by
$$
	T^{\alpha}_t f(x) = \sum_{n=0}^{\infty} 
		\exp\Big(-t\frac{2n+|\alpha|+d}{2}\Big) \sum_{|k|=n}
    \langle f, \mathcal{L}^{\alpha}_k \rangle \mathcal{L}^{\alpha}_k(x),
    \qquad t>0, \quad x \in \Rset^d_{+}.
$$
It follows from the Hille-Hardy formula \cite[(4.17.6)]{Leb} that the integral representation is
\begin{equation} \label{hi}
	T^{\alpha}_t f(x) = \exp\Big({-t\frac{|\alpha|+d}{2}}\Big)
	\int {\mathcal{H}}^{\alpha}_{t\slash 2}(x,y)f(y) \, dy,
\end{equation}
where the kernel is given by
${\mathcal{H}}^{\alpha}_t(x,y) =  \prod_{i=1}^d {\mathcal{H}}^{\alpha_i}_{t}(x_i,y_i)$,
with the component kernels
$$
\mathcal{H}_t^{a}(\xi,\eta) = \frac{\exp({(1+a)t})}{2\sinh t}
	\exp\Big({-\frac{\coth t}{2} (\xi+\eta)}\Big) I_{a}\Big(\frac{\sqrt{\xi \eta}}{\sinh t}\Big)
$$
for $\xi, \eta >0$, $t>0$ and $a>-1$. Here $I_a$ is the modified Bessel function of the first
kind and order $a$. Note that $\mathcal{H}_t^{\alpha}(x,y)$
is strictly positive for $x,y \in \R^d_{+}$, $t>0$.

We now deduce some useful upper and lower estimates for the one-dimensional kernel
$\mathcal{H}_t^a(\xi,\eta)$. 
By using the standard asymptotics (cf. \cite[(5.16.4), (5.16.5)]{Leb})
\begin{equation} \label{asymp}
I_{a}(x) \simeq x^{a}, \quad x \to 0^{+}, \qquad
I_{a}(x) \simeq x^{-1\slash 2}e^x, \quad x \to \infty,
\end{equation}
and the fact that $I_{a}(\cdot)$ is continuous on $(0,\infty),$ we see that
$$
\mathcal{H}^{a}_t(\xi,\eta) \simeq 
\left\{ \begin{array}{ll} D^{a}_t(\xi,\eta) &  \textrm{if} \;\; \sqrt{\xi \eta} \le \sinh t \\
													E^a_t(\xi,\eta) & \textrm{if} \;\; \sqrt{\xi \eta} > \sinh t 
				\end{array}
\right.,
\qquad \xi,\eta>0, \quad t>0, \quad a > -1,
$$
where
\begin{align*}
	D_t^a(\xi,\eta) & = \frac{\exp((1+a)t)}{(\sinh t)^{a+1}} (\xi \eta)^{a\slash 2} 
		\exp\Big({-\frac{\coth t}{2} (\xi+\eta)}\Big),\\
	E_t^a(\xi,\eta) & = \frac{\exp((1+a)t)}{(\sinh t)^{1\slash 2}} (\xi \eta)^{-1\slash 4} 
		\exp\Big({-\frac{\coth t}{2} (\xi+\eta)} + \frac{\sqrt{\xi \eta}}{\sinh t} \Big) \\
			& = \frac{\exp((1+a)t)}{(\sinh t)^{1\slash 2}} (\xi \eta)^{-1\slash 4} 
		\exp\Big(\frac{-1}{2\sinh t} \frac{(\xi-\eta)^2}{(\sqrt{\xi}+\sqrt{\eta})^2}\Big) 
			\exp\Big( \frac{1-\cosh t}{2\sinh t} (\xi + \eta) \Big).
\end{align*}

\begin{lem} \label{upper}
For all $0 < t \leq 1$ and $\xi, \eta > 0$ we have
\begin{align} \label{main}
	\mathcal{H}_t^a(\xi,\eta) \lesssim \frac1{\sqrt{t\xi}} 
	\exp\Big(-c\,\frac{(\xi-\eta)^2}{t\xi}\Big) \exp\big(-ct(\xi+\eta)\big)
	+ \frac{(\xi \eta)^{a/2}}{t^{a+1}} \exp\Big(-c\,\frac{\xi+\eta}{t}\Big).
\end{align}
Similarly, for $t > 1$ and $\xi, \eta > 0$,
\begin{align} \label{large_t}
	\mathcal{H}_t^a(\xi,\eta) \lesssim (\xi \eta)^{a/2} \exp\big(-c(\xi+\eta)\big).
\end{align}
\end{lem}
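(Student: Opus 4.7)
The plan is to exploit the two-regime description of $\mathcal{H}_t^a$ already extracted from the Bessel asymptotics just before the lemma, namely $\mathcal{H}_t^a\simeq D_t^a$ when $\sqrt{\xi\eta}\le \sinh t$ and $\mathcal{H}_t^a\simeq E_t^a$ when $\sqrt{\xi\eta}>\sinh t$. I would match the $D$-term with the second summand on the right-hand side of \eqref{main} and of \eqref{large_t}, and handle the $E$-term by further splitting into a near-diagonal and an off-diagonal sub-case. The essential algebraic ingredient, already displayed in the second form of $E_t^a$ before the lemma, is the identity
\[
    -\frac{\coth t}{2}(\xi+\eta)+\frac{\sqrt{\xi\eta}}{\sinh t}=-\frac{(\sqrt{\xi}-\sqrt{\eta})^2}{2\sinh t}-\frac{\cosh t-1}{2\sinh t}(\xi+\eta),
\]
which separates the Gaussian-type decay from the polynomial-in-$t$ decay.

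For \eqref{main} I plug in $\sinh t\simeq t$, $\coth t\simeq 1/t$, $(\cosh t-1)/(2\sinh t)\simeq t/4$ and $e^{(1+a)t}\simeq 1$, all valid on $0<t\le 1$. In the $D$-region this collapses $D_t^a$ to $t^{-(a+1)}(\xi\eta)^{a/2}\exp(-c(\xi+\eta)/t)$, which is precisely the second summand of \eqref{main}. In the $E$-region the same substitutions give
\[
    E_t^a\simeq t^{-1/2}(\xi\eta)^{-1/4}\exp\Big(-c\,\frac{(\sqrt{\xi}-\sqrt{\eta})^2}{t}\Big)\exp\big(-c\,t(\xi+\eta)\big).
\]
If $\xi/4\le\eta\le 4\xi$, then $(\xi\eta)^{-1/4}\simeq\xi^{-1/2}$ and $(\sqrt{\xi}-\sqrt{\eta})^2=(\xi-\eta)^2/(\sqrt{\xi}+\sqrt{\eta})^2\simeq(\xi-\eta)^2/\xi$, and the right-hand side is exactly the first summand of \eqref{main}. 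Otherwise $(\sqrt{\xi}-\sqrt{\eta})^2\gtrsim\xi+\eta$, so the Gaussian factor itself yields $\exp(-c(\xi+\eta)/t)$, matching the exponential of the second summand. The remaining polynomial ratio $t^{a+1/2}(\xi\eta)^{-1/4-a/2}$ is then controlled by combining the $E$-region inequality $\xi\eta\ge(\sinh t)^2\simeq t^2$ with a small fraction of the exponential decay via $\xi+\eta\ge 2\sqrt{\xi\eta}$: writing $s=\sqrt{\xi\eta}/t\ge 1$, it reduces to the uniform estimate $s^{-1/2-a}e^{-cs}\lesssim 1$ on $[1,\infty)$, valid for every $a>-1$.

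For \eqref{large_t} I would run the same dichotomy with the large-$t$ asymptotics $\sinh t,\cosh t\simeq e^t/2$, $\coth t\to 1$, $(\cosh t-1)/(2\sinh t)\to 1/2$. The $D$-region directly produces $D_t^a\simeq(\xi\eta)^{a/2}\exp(-c(\xi+\eta))$, and in the $E$-region one has $E_t^a\lesssim e^{(a+1/2)t}(\xi\eta)^{-1/4}\exp(-c(\xi+\eta))$; the extra factor $e^{(a+1/2)t}(\xi\eta)^{-1/4-a/2}$ is absorbed via $\sqrt{\xi\eta}\ge\sinh t\simeq e^t/2$ and AM-GM, by precisely the same trick. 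The main obstacle throughout is the asymmetry of the first summand of \eqref{main}: whenever $\xi$ and $\eta$ differ wildly, that summand is not a pointwise upper bound for $E_t^a$, and one has to recognise that in such a regime the Gaussian decay $(\sqrt{\xi}-\sqrt{\eta})^2/t\gtrsim (\xi+\eta)/t$ is itself strong enough for the second summand alone to capture the kernel.
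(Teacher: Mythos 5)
Your proof is correct and follows essentially the same route as the paper's: the two-regime description from the Bessel asymptotics, with a near-/off-diagonal split in the $E$-regime. The only cosmetic difference is in how the off-diagonal polynomial factor is absorbed — you reduce it directly to $s^{-1/2-a}e^{-cs}\lesssim 1$ on $[1,\infty)$ (and the analogous $v^{-1/2-a}e^{-cv}$ for $t>1$), while the paper inserts the factor $(\sqrt{\xi\eta}/\sinh t)^{a+1}$ and then absorbs the leftover $(\sqrt{\xi\eta}/\sinh t)^{1/2}$ into the exponential; these are the same estimate written differently.
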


Observe that the right-hand side in \eqref{large_t} coincides with the last term in 
\eqref{main} taken with $t=1$. So for the maximal operator defined in \eqref{max_H} below
by taking the supremum over $t > 0$, it is enough to consider the right-hand side of
(\ref{main}) with  $0 < t \leq 1$.

\begin{proof}[Proof of Lemma \ref{upper}]
We consider the two cases obtained from the asymptotics of $I_a$. \\
\noindent \textit{Case 1}: $\sqrt{\xi \eta} \le \sinh t$. 
Then $\mathcal{H}_t^a(\xi,\eta) \simeq D_t^a(\xi,\eta)$ and it is immediately
seen that for $t\le 1$
$$
\mathcal{H}_t^a(\xi,\eta) \lesssim \frac{1}{t^{a+1}} (\xi \eta)^{a\slash 2} 
	\exp\Big( -c\frac{\xi + \eta}{t}\Big).
$$ 
On the other hand, when $t>1$ we easily get
$$
\mathcal{H}_t^a(\xi,\eta) \lesssim (\xi \eta)^{a\slash 2} \exp\big( -c(\xi+\eta) \big).
$$
\noindent \textit{Case 2}: $\sqrt{\xi \eta} > \sinh t$. 
Now $\mathcal{H}_t^a(\xi,\eta) \simeq E_t^a(\xi,\eta)$. Assume first that $t \le 1$. 
Then we have for $\xi \simeq \eta$
$$
\mathcal{H}_t^a(\xi,\eta) \lesssim \frac{1}{\sqrt{t\xi}}
	\exp\Big(-c\,\frac{(\xi-\eta)^2}{t\xi}\Big) \exp\big(-ct(\xi+\eta)\big).
$$
If $\xi \simeq \eta$ does not hold, then 
$(\xi-\eta)^2\slash (\sqrt{\xi}+\sqrt{\eta})^2 \simeq \xi + \eta$ and we can write
\begin{align*}
\mathcal{H}_t^a(\xi,\eta) & \lesssim \frac{1}{t^{1\slash 2}(\xi \eta)^{1\slash 4}} 
	\exp\Big( -c\,\frac{\xi+\eta}{t}\Big) \\
& < \Big( \frac{\sqrt{\xi\eta}}{\sinh t} \Big)^{a+1}
	\frac{1}{t^{1\slash 2}(\xi \eta)^{1\slash 4}} \exp\Big( -c\,\frac{\xi+\eta}{t}\Big)\\
& \simeq \frac{1}{t^{a+1}} (\xi \eta)^{a\slash 2} \exp\Big( -c\,\frac{\xi+\eta}{t}\Big)
	\Big(\frac{\sqrt{\xi \eta}}{t}\Big)^{1\slash 2} \\
& \lesssim \frac{1}{t^{a+1}} (\xi \eta)^{a\slash 2} \exp\Big( -{c}\,\frac{\xi+\eta}{t}\Big),
\end{align*}
with a new value of $c$ in the last expression. Letting $t>1$, we get
\begin{align*}
\mathcal{H}_t^a(\xi,\eta) & \lesssim \frac{\exp((1+a)t)}{(\sinh t)^{1\slash 2} 
(\xi \eta)^{1\slash 4}}
	\exp\big( -c(\xi+\eta) \big) \\
& \simeq (\xi \eta)^{a\slash 2} \exp\big( -c(\xi+\eta) \big) 
	\Big( \frac{\sinh t}{\sqrt{\xi \eta}}\Big)^{a+1} 
		\Big(\frac{\sqrt{\xi \eta}}{\sinh t}\Big)^{1\slash 2} \\
& \lesssim (\xi \eta)^{a\slash 2} \exp\big( -{c}(\xi+\eta) \big),
\end{align*}
since $\sinh t \slash \sqrt{\xi \eta}<1$, $\sqrt{\xi \eta} < \xi + \eta$ and $\sinh t >1$. 
Altogether, this proves the lemma.
\end{proof}
  
The next result shows that Lemma \ref{upper} is sharp in certain cases.

\begin{lem} \label{lower}
The following lower estimates hold.
\begin{itemize}
\item[(a)]  
For $0 < t \leq 1\slash 4$ and $(2t)^{-1} < \xi < 2t^{-1}$ and $|\xi-\eta| < 1$,
$$
\mathcal{H}_t^a(\xi,\eta) \gtrsim 1.
$$
\item[(b)] 
For $0 < t \leq 1$ and $0 < \xi < 2t$ and $0 < \eta < 2t$,
$$
\mathcal{H}_t^a(\xi,\eta) \gtrsim \frac{(\xi \eta)^{a\slash 2}}{t^{a+1}}.
$$
\end{itemize}
\end{lem}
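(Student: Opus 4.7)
The plan is to use the two-regime decomposition established just before Lemma \ref{upper}: $\mathcal{H}^a_t \simeq D^a_t$ on $\{\sqrt{\xi\eta}\le\sinh t\}$ and $\mathcal{H}^a_t \simeq E^a_t$ on $\{\sqrt{\xi\eta}>\sinh t\}$. I would then bound the relevant explicit expression from below by showing that every exponential and every power factor collapses to the size claimed. This is essentially the upper-bound argument of Lemma \ref{upper} run in reverse, under the more restrictive hypotheses on $\xi$, $\eta$, and $t$.

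For part (a), the hypotheses force us into the second regime. Since $t\le 1/4$, we have $\xi \simeq t^{-1}$ and $|\xi-\eta|<1\le t^{-1}$, so $\eta \simeq t^{-1}$ as well, giving $\sqrt{\xi\eta}\simeq t^{-1}$, while $\sinh t \simeq t$. Thus $\sqrt{\xi\eta}/\sinh t \simeq t^{-2}\ge 16$, and indeed $\mathcal{H}^a_t\simeq E^a_t$. Working with the second displayed form of $E^a_t$, I would check: the prefactor $(\sinh t)^{-1/2}(\xi\eta)^{-1/4}\simeq t^{-1/2}\cdot t^{1/2}=1$; the Gaussian-type exponent is bounded because $(\xi-\eta)^2\le 1$ and $(\sqrt{\xi}+\sqrt{\eta})^2\simeq \xi+\eta\simeq t^{-1}$, so $\frac{1}{2\sinh t}\cdot\frac{(\xi-\eta)^2}{(\sqrt{\xi}+\sqrt{\eta})^2}\lesssim 1$; and the last exponent $\frac{1-\cosh t}{2\sinh t}(\xi+\eta)\simeq -t\cdot t^{-1}$ is likewise bounded. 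Collecting everything yields $\mathcal{H}^a_t\gtrsim 1$.

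For part (b), the hypothesis $\xi,\eta<2t$ does not decide which regime applies, so I would split. In Case~1, $\sqrt{\xi\eta}\le\sinh t$, I work directly with $D^a_t$: for $t\le 1$, $\sinh t\simeq t$ and $\coth t\,(\xi+\eta)/2\lesssim t^{-1}\cdot t\simeq 1$, so all non-polynomial factors in $D^a_t$ are $\simeq 1$ and $D^a_t\gtrsim (\xi\eta)^{a/2}/t^{a+1}$. In Case~2, $\sqrt{\xi\eta}>\sinh t$, the elementary bound $\sinh t\ge t$ combined with $\sqrt{\xi\eta}<2t$ forces $\sqrt{\xi\eta}/\sinh t$ into the bounded interval $(1,2]$. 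Hence, by continuity and positivity of $I_a$ on $(0,\infty)$, $I_a(\sqrt{\xi\eta}/\sinh t)\simeq 1$. Returning to the raw definition of $\mathcal{H}^a_t$ rather than to $E^a_t$, the remaining factors contribute $\simeq 1/\sinh t\simeq 1/t$, so $\mathcal{H}^a_t\simeq 1/t$. Finally, since $\sqrt{\xi\eta}\simeq t$ in this subcase, $(\xi\eta)^{a/2}/t^{a+1}\simeq 1/t$, which matches the required lower bound.

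No serious obstacle is expected; the work is bookkeeping on the exponentials and powers appearing in the definition of $\mathcal{H}^a_t$. The one mild subtlety is Case~2 of part~(b): rather than invoke the asymptotic $E^a_t$, which is tightest when $\sqrt{\xi\eta}/\sinh t\to\infty$, it is cleaner to use the definition of $\mathcal{H}^a_t$ directly, because here the Bessel argument remains in a compact sub-interval of $(0,\infty)$ and the naive pointwise comparison $I_a(\cdot)\simeq 1$ is already enough.
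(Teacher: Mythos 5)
Your proposal is correct and takes essentially the same route as the paper's own proof: part (a) via the second form of $E^a_t$, and part (b) via $D^a_t$. The case split you add in (b) is unnecessary, since under the hypotheses the Bessel argument $\sqrt{\xi\eta}/\sinh t$ always lies in $(0,2]$, so the small-argument comparison $I_a(x)\simeq x^a$ (hence $\mathcal H^a_t\simeq D^a_t$) holds throughout by continuity of $I_a$ on compact subintervals of $(0,\infty)$ — which is exactly what the paper uses without comment.
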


\begin{proof}
Under the assumptions of (a), we also have $\xi \simeq \eta$ and 
$\sqrt{\xi \eta}> t \simeq \sinh t$, hence
$$
\mathcal{H}_t^a(\xi,\eta) \simeq E_t^a(\xi,\eta) \simeq
		\frac{1}{t^{1\slash 2}(\xi \eta)^{1\slash 4}} 
		\exp\Big(\frac{-1}{2\sinh t} \frac{(\xi-\eta)^2}{(\sqrt{\xi}+\sqrt{\eta})^2}\Big) 
			\exp\Big( \frac{1-\cosh t}{2\sinh t} (\xi + \eta) \Big) \gtrsim 1.
$$
Considering (b), we have $\xi + \eta \lesssim t \simeq 1\slash \coth t$ and therefore
$$
\mathcal{H}_t^a(\xi,\eta) \simeq D_t^a(\xi,\eta) \simeq \frac{(\xi\eta)^{a\slash 2}}{t^{a+1}}
	\exp\Big( - \frac{\coth t}{2} (\xi+\eta) \Big) \gtrsim 
	\frac{(\xi\eta)^{a\slash 2}}{t^{a + 1}}.
$$
\end{proof}

The following technical result provides estimates of certain level sets 
that will be crucial in further developments.

\begin{lem} \label{level}
\begin{itemize}
\item[(a)] For $t,\nu>0$, one has
$$
\bigg|\bigg\{x \in (0,t)^d: \prod_{j=1}^d x^{-1}_j > \nu  \bigg\}\bigg| \le 
	\frac{C}{\nu} \big[\log\big(2 + t^{d}\nu\big)\big]^{d-1},
$$
where $C$ depends only on the dimension.
\item[(b)] 
For $t^{d} \nu \ge 1$, the estimate in {\rm (a)} is sharp in the sense 
that the level set has measure at least $c \nu^{-1} [\log(2 + t^{d} \nu)]^{d-1}$,
with $c=c(d)>0$.
\item[(c)] 
Given $\gamma > 0$, one has for $\sigma,\nu>0$ 
$$
\bigg|\bigg\{x \in \mathbb{R}_+^d: \prod_{j=1}^d x^{-1}_j \; 
\exp\bigg(- \Big(\sigma \sum_{j=1}^d x_j\Big)^\gamma\bigg) > \nu  \bigg\}\bigg| 
	\le \frac{C}{\nu} \big[\log\big(2 + \sigma^{-d} \nu\big)\big]^{d-1},
$$
with $C=C(\gamma,d)$.
\end{itemize}
\end{lem}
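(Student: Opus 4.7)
My plan is to handle (a) and (b) simultaneously by reducing to a closed-form integral, and then to derive (c) from (a) via a dyadic decomposition of $\R^d_+$ according to $S := \sum_j x_j$.

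For (a), the logarithmic substitution $u_j = \log(t/x_j)$ maps $(0,t)^d$ bijectively onto $\R^d_+$, satisfies $dx = t^d e^{-|u|}\, du$, and transforms the constraint $\prod_j x_j^{-1} > \nu$ into $|u| := u_1+\cdots+u_d > \log(t^d \nu)$. Writing $L := \max(\log(t^d\nu),\,0)$, the measure in question therefore equals
$$
t^d \int_{\{u\in\R^d_+\,:\,|u|>L\}} e^{-|u|}\, du \;=\; t^d\, e^{-L} \sum_{k=0}^{d-1}\frac{L^k}{k!},
$$
by the standard expression for the tail of a $\Gamma(d,1)$ variable (equivalently, by induction on $d$ via iterated integration). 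Since $t^d e^{-L} = \min(t^d,\nu^{-1}) \le \nu^{-1}$ and $\sum_{k=0}^{d-1} L^k/k! \lesssim (1+L)^{d-1} \simeq [\log(2+t^d\nu)]^{d-1}$, assertion (a) follows. The identity is in fact an equality, so (b) comes for free: when $L \ge 1$ the single term $L^{d-1}/(d-1)!$ yields $c L^{d-1}/\nu \simeq c[\log(2+t^d\nu)]^{d-1}/\nu$; when $0 \le L \le 1$ the $k=0$ term alone gives at least $e^{-1}/\nu$, while the logarithmic factor is bounded by a constant depending only on $d$.

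For (c), decompose $\R^d_+$ into the regions $E_0 = \{S \le 1/\sigma\}$ and $E_k = \{2^{k-1}/\sigma < S \le 2^k/\sigma\}$ for $k \ge 1$. Every $x \in E_k$ satisfies $x_j < 2^k/\sigma$, and for $k \ge 1$ also $(\sigma S)^\gamma \ge 2^{\gamma(k-1)}$, so any point of the level set $A$ inside $E_k$ obeys $\prod_j x_j^{-1} > \nu \exp(2^{\gamma(k-1)})$. Applying (a) with $t = 2^k/\sigma$ and with $\nu$ replaced by $\nu\,e^{2^{\gamma(k-1)}}$, and using the elementary bound $\log(2 + XYZ) \le \log(2+X)+\log(2+Y)+\log(2+Z)$ for $X,Y,Z\ge 0$ (immediate from $(2+X)(2+Y)(2+Z) \ge 2+XYZ$), one obtains
$$
|A\cap E_k|\,\lesssim\, \frac{e^{-2^{\gamma(k-1)}}}{\nu}\Bigl(k^{d-1} + 2^{\gamma(d-1)(k-1)} + [\log(2+\sigma^{-d}\nu)]^{d-1}\Bigr);
$$
the shell $E_0$ is handled directly by (a) with $t = 1/\sigma$ and the trivial bound $\exp(-(\sigma S)^\gamma) \le 1$. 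Summing over $k \ge 0$, the super-exponential decay $e^{-2^{\gamma(k-1)}}$ makes each of the three series absolutely convergent with constants depending only on $\gamma$ and $d$, which gives (c).

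The main technical obstacle is the bookkeeping in (c): the logarithm of the product inside must be cleanly separated into its $k$-dependent and $\nu$-dependent pieces, and one must verify that the double-exponential factor $\exp(-2^{\gamma(k-1)})$ dominates each of the competing polynomial-in-$k$ and $2^{\gamma(d-1)(k-1)}$ contributions uniformly, so that the emerging constants depend only on $\gamma$ and $d$. Once the log-substitution is identified, parts (a) and (b) reduce to essentially a single line.
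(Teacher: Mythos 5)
Your proof is correct, and for parts (a) and (b) it takes a genuinely different route. The paper first scales to $t=1$ and then proves (a) by induction on the dimension $d$: the set where the product of the first $d-1$ coordinate reciprocals already exceeds $\nu$ is disposed of by the induction hypothesis, and the remaining mass is an explicit integral over $(\nu^{-1},1)^{d-1}$ giving $\nu^{-1}(\log\nu)^{d-1}$; the lower bound (b) comes from the inner cube $(\nu^{-1/(d-1)},1)^{d-1}$. Your substitution $u_j=\log(t/x_j)$ instead converts the level set into a tail event for a $\Gamma(d,1)$ variable and yields the \emph{exact} identity $t^d e^{-L}\sum_{k=0}^{d-1}L^k/k!$ for the measure; this simultaneously gives the upper bound (a) and the matching lower bound (b) without induction, and has the pleasant side effect of making the sharpness completely transparent. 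For (c), both you and the paper reduce to (a) by slicing $\R_+^d$ along level sets of $S=\sum x_j$ and exploiting that the exponential factor forces $\prod x_j^{-1}$ to be large in each slice. The paper normalizes $\sigma=1$ and uses unit-width bands $k<S<k+1$, producing a decay factor $e^{-k^\gamma}$; you keep $\sigma$ explicit and use dyadic bands $2^{k-1}/\sigma<S\le 2^k/\sigma$, producing $e^{-2^{\gamma(k-1)}}$, and you make the separation of the resulting logarithm into $k$-dependent and $\nu$-dependent pieces fully explicit. Both versions work; yours is a bit more verbose in the bookkeeping but also more self-contained, and the $\Gamma$-tail argument for (a)/(b) is arguably the cleaner of the two.
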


\begin{proof}
In (a) and (b) one may assume that $t = 1$, since the general case then
follows by a simple scaling argument. To prove (a), we shall use induction in $d$,
observing that the case $d = 1$ is obvious. The induction assumption implies that the set 
$$
\bigg\{x \in (0,1)^d: \prod_{j=1}^{d-1} x^{-1}_j > \nu  \bigg\}
$$
has measure at most $C \nu^{-1} [\log(2 + \nu)]^{d-2}$, so that this set 
can be neglected. The remaining set to be considered is then 
$$
\bigg\{ x \in (0, 1)^d:  x_d < \nu^{-1}\prod_{j=1}^{d-1} x^{-1}_j < 1\bigg\},
$$
where $\nu \ge 1$; otherwise the set is empty.
Clearly, its measure is
\[
\int\!\! \cdots \!\!\int \nu^{-1}{\prod_{j=1}^{d-1} x^{-1}_j} \, dx_1 \cdots dx_{d-1},
\]
the integral taken over the set $(0, 1)^{d-1} \cap \{\prod_{j=1}^{d-1} x^{-1}_j < \nu\}$. 
This set is contained in the square $(\nu^{-1}, 1)^{d-1}$, and so the integral is not
larger than $\nu^{-1} (\log \nu)^{d-1}$. This proves (a). 
Moreover, the same set contains the square $(\nu^{-1/(d-1)}, 1)^{d-1}$ and
thus the integral is not less than $c \nu^{-1}(\log \nu)^{d-1}.$ Now (b) follows.

In (c) we can assume that $\sigma = 1$, because of a scaling argument.
Then for each $k \in \{0, 1,\ldots\}$, the intersection of the level set in
(c) with the band $k < \sum_{j=1}^d x_j < k+1$ is contained in the set
$$
\bigg\{x \in (0,k+1)^{d}: \prod_{j=1}^d x^{-1}_j > \nu \exp\big(k^\gamma\big)\bigg\}.
$$
In view of (a), the measure of this set is no larger than
$$
C \nu^{-1} \exp(- k^\gamma) \log \big(2 + (k+1)^d\nu \exp(k^\gamma)\big)^{d-1}.
$$
Summing these quantities in $k$, we get at most $C\nu^{-1} [\log(2 + \nu)]^{d-1}$, and (c) follows.
\end{proof}

From now on, we shall work with the maximal operator
\begin{equation} \label{max_H}
\M f(x) = \sup_{t>0} \int \mathcal{H}_t^{\alpha}(x,y) |f(y)|\,dy
\end{equation}
rather than with $T^{\alpha}_*$. Since $\M f$ dominates $T_*^{\alpha} f$,
we shall obtain even slightly stronger positive results than stated in Theorem 
\ref{main_thm}. Also counterexamples will be constructed for $\M$. In those only small
values of $t$ will be used, and since the two maximal operators are comparable if $t$ is
restricted to, say, $(0,1)$, the counterexamples will be valid for $T^{\alpha}_*$ as well.

Let $M_j$ denote the standard centered one-dimensional maximal operator in 
$\mathbb{R}_+^d$, taken with respect to the $j$th variable.

%%%%%%%%%%%%%%%%%%%%%%%%%%%%%%%%%%%%%%%%%%%%%%%%%%%%%%%%%%%%%%%%%%%%%%%%%%%%%%%%%%%%%%%%%%%%%%%%%%%%
\section{The range $p_0 < p < p_1$} \label{sec:strong}
%%%%%%%%%%%%%%%%%%%%%%%%%%%%%%%%%%%%%%%%%%%%%%%%%%%%%%%%%%%%%%%%%%%%%%%%%%%%%%%%%%%%%%%%%%%%%%%%%%%%

We first consider the one-dimensional case, assuming that $\alpha = a \in (-1,0)$.
The critical exponents are $p_1 = -2\slash a$ and $p_0 = p_1' = 2\slash (a+2)$.

\begin{prop} \label{estimate} 
Let $d=1$ and $-1<a<0$. There exists a constant $c$ such that, for $0 < t \leq 1$ 
and any suitable function $f$ defined in $\mathbb{R}_+$,
\begin{align} \label{est}
\int \mathcal{H}_t^a(\xi,\eta) |f(\eta)|\,d\eta & \lesssim  
	e^{-ct\xi} M_1f(\xi) + e^{-c\xi/t} \xi^{-1/p_1} \|f\|_{p_1},\\
\nonumber
\int \mathcal{H}_t^a(\xi,\eta) |f(\eta)|\,d\eta & \lesssim  
		e^{-ct\xi} M_1f(\xi) + e^{-c\xi\slash t}\xi^{-1/p_0} \|f\|_{p_0,1}.
\end{align}
For $t > 1$, the same inequalities hold with $t$ replaced by 1 in the right-hand sides.
\end{prop}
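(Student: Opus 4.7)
The plan is to insert the pointwise upper bound on $\mathcal{H}_t^a$ from Lemma \ref{upper} into the integral $\int \mathcal{H}_t^a(\xi,\eta)|f(\eta)|\,d\eta$ and split the result into two contributions matching the two terms on the right-hand side of \eqref{main}. Each contribution will be controlled by a separate device, producing exactly the two summands on the right-hand side of \eqref{est}.

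For the Gaussian contribution
\[
\frac{1}{\sqrt{t\xi}}\int \exp\Big(-c\,\frac{(\xi-\eta)^2}{t\xi}\Big)\,e^{-ct(\xi+\eta)}|f(\eta)|\,d\eta,
\]
I would factor $e^{-ct\xi}$ out of $e^{-ct(\xi+\eta)}$ and use $e^{-ct\eta}\le 1$. What remains is $|f|$ integrated against a Gaussian kernel of width $\sqrt{t\xi}$ centered at $\xi$, correctly $L^1$-normalized in $\eta$, and the standard approximation-of-identity argument on $\mathbb{R}_+$ dominates it by $M_1f(\xi)$ up to a constant. This produces the first term $e^{-ct\xi}M_1f(\xi)$ in both inequalities.

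For the second contribution, pull the $\xi$-dependent factor outside to reduce matters to the $\eta$-integral
\[
\frac{\xi^{a/2}}{t^{a+1}}\,e^{-c\xi/t}\int \eta^{a/2}e^{-c\eta/t}|f(\eta)|\,d\eta.
\]
In the first inequality, Hölder with exponents $p_1$ and $p_1'=p_0$ bounds the $\eta$-integral by $\|f\|_{p_1}$ times $\|\eta^{a/2}e^{-c\eta/t}\|_{p_0}$, and a direct Gamma-function computation (using $a p_0/2+1=2(a+1)/(a+2)$) gives the latter $L^{p_0}$-norm as $\simeq t^{a+1}$, exactly cancelling the prefactor $t^{-(a+1)}$. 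Since $a/2=-1/p_1$, the result simplifies to $\xi^{-1/p_1}e^{-c\xi/t}\|f\|_{p_1}$. In the second inequality, I would instead invoke the Hölder inequality in Lorentz spaces, $\int |fg|\lesssim \|f\|_{p_0,1}\|g\|_{p_1,\infty}$ (valid because $(p_0)'=p_1$), applied to $g(\eta)=\eta^{a/2}e^{-c\eta/t}$. The key observation is that the dilation $\eta=t\tau$ shows $\|g\|_{p_1,\infty}$ is independent of $t$: indeed $\eta^{a/2}=\eta^{-1/p_1}$ is the canonical weak-$L^{p_1}$ profile, and the exponential factor only sharpens the tail. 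Writing the leftover prefactor as $\xi^{a/2}t^{-(a+1)}=\xi^{-1/p_0}(\xi/t)^{a+1}$ and absorbing $(\xi/t)^{a+1}$ into the exponential via the elementary bound $x^{a+1}e^{-cx}\lesssim e^{-c'x}$ (available because $a+1>0$), I arrive at $\xi^{-1/p_0}e^{-c\xi/t}\|f\|_{p_0,1}$.

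The case $t>1$ is immediate from \eqref{large_t}: the kernel is then bounded by $(\xi\eta)^{a/2}e^{-c(\xi+\eta)}$, which is precisely the second term of \eqref{main} taken at $t=1$, so the same $\eta$-integral arguments apply verbatim with $t$ replaced by $1$. The main technical delicacy lies in the Lorentz-pairing step for the second estimate: everything rests on the coincidence $a/2=-1/p_1$, which places $\eta^{a/2}$ exactly in $L^{p_1,\infty}\setminus L^{p_1}$ and explains both why $L^{p_0,1}$ is the correct refinement of $L^{p_0}$ and why the critical exponents $p_0,p_1$ emerge intrinsically from the kernel itself.
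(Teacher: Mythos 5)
Your proof is correct and follows essentially the same route as the paper's: integrate the kernel bound \eqref{main} against $|f|$, control the local Gaussian piece by the centered maximal function, and bound the tail piece by Hölder for $L^{p_1}$ and a Lorentz pairing for $L^{p_0,1}$, in both cases exploiting $a/2=-1/p_1$ and $a/2+1/p_0=a+1$. The only cosmetic difference is in the $p_0$ step: you invoke the Lorentz-space Hölder inequality $\int|fg|\lesssim\|f\|_{p_0,1}\|g\|_{p_1,\infty}$ with $g(\eta)=\eta^{a/2}e^{-c\eta/t}$ and a dilation argument, whereas the paper simply discards the exponential in $\eta$, factors out $(\xi/t)^{a+1}$, and applies the Hardy--Littlewood rearrangement inequality to $\eta^{-1/p_1}$ — the two devices are equivalent and yield the same conclusion.
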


Suppressing the exponentials, we immediately deduce from these estimates 
the weak type $(p_1, p_1)$ and the restricted weak type $(p_0, p_0)$ of the one-dimensional
maximal operator $\M$. Then, by interpolation, it follows that $\M$ is bounded on $L^p(\R_+)$
for $p_0<p<p_1$. This implies the known results for 
$T^{\alpha}_*$ stated in Theorem \ref{MaSeTo}.

\begin{proof}[Proof of Proposition \ref{estimate}]
We integrate \eqref{main} against $|f(\eta)|$. The first term of the right-hand side in
\eqref{main} leads to an integral which can be estimated by $C e^{-ct\xi} M_1f(\xi)$.
The second term of \eqref{main} gives an integral which can be dominated by
\begin{align*}
e^{-c\xi\slash t} \frac{\xi^{a/2}}{t^{a+1}} \int_0^\infty \eta^{a/2} 
	e^{-c {\eta}\slash t}|f(\eta)|\,d\eta
& \le  e^{-c\xi\slash t} \frac{\xi^{-1\slash p_1}}{t^{a+1}} \bigg(\int_0^\infty 
	\eta^{a p_0\slash 2} e^{-c p_0 {\eta}\slash t}\,d\eta \bigg)^{1\slash p_0} \|f\|_{p_1}\\
& \simeq  e^{-c\xi\slash t} \xi^{-1/p_1} \|f\|_{p_1},
\end{align*}
since $a\slash 2 + 1\slash p_0 = a+1$. But the same integral is also dominated by
\begin{align*}
e^{-c\xi\slash t} \frac{\xi^{a/2}}{t^{a+1}} \int_0^\infty \eta^{a/2} 
	|f(\eta)|\,d\eta
& =  e^{-c\xi\slash t} \Big(\frac{\xi}{t}\Big)^{a+1} \xi^{-1\slash p_0} 
	\int_0^\infty \eta^{-1\slash p_1} |f(\eta)| \,d\eta \\
& \lesssim  e^{-{c}\xi\slash t} \xi^{-1/p_0} 
	\int_0^\infty |f(\eta)| \eta^{1\slash p_0} \,\frac{d\eta}{\eta}.
\end{align*}
Since $\int f g \le \int f^* g^*$ (this is a well-known inequality due to Hardy and Littlewood,
see \cite[Theorem 2.2]{BeSh}), the last integral is controlled by $\|f\|_{p_0,1}$.
The conclusion follows in the case $t \le 1$, and for $t>1$ it is a consequence of \eqref{large_t}.
\end{proof}

Let now $d\ge 1$ and $-1 < \widetilde{\alpha} < 0$.
The critical endpoints are $p_1=p_1(\widetilde{\alpha})$ and $p_0=p_0(\widetilde{\alpha})$.
The next result justifies items (a1) and (b1) in Theorem \ref{main_thm}.
\begin{thm} \label{t_strong}
Let $d\ge 1$, $\alpha \in (-1,\infty)^d$ and assume that $-1 < \widetilde{\alpha} < 0$.
Then $\M$ is bounded on $L^p(\R^d_+)$ for $p_0<p<p_1$.
\end{thm}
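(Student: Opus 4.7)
The strategy is to iterate a slightly strengthened version of the one-dimensional pointwise estimate from Proposition \ref{estimate} in each coordinate, exploiting the tensor-product structure $\mathcal{H}_t^\alpha(x,y)=\prod_{i=1}^d\mathcal{H}_t^{\alpha_i}(x_i,y_i)$ together with the standard $L^p$-boundedness of iterated one-dimensional Hardy--Littlewood maximal operators.

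First, I revisit the proof of Proposition \ref{estimate} with Hölder's inequality applied at an arbitrary exponent $p\in(p_0(a),p_1(a))$ instead of $p_1$. The same computation (for $0<t\le 1$ and $-1<a<0$) produces
\[
\int \mathcal{H}_t^a(\xi,\eta)|f(\eta)|\,d\eta \lesssim e^{-ct\xi}Mf(\xi)+e^{-c\xi/t}\,\xi^{a/2}\,t^{-a/2-1/p}\,\|f\|_{L^p(d\eta)},
\]
the constraint $p>p_0(a)$ being precisely what makes the Hölder integral $\int_0^\infty \eta^{ap'/2}e^{-cp'\eta/t}d\eta$ converge. Since $\alpha_i\ge\widetilde{\alpha}$, every $p\in(p_0,p_1)$ lies in $(p_0(\alpha_i),p_1(\alpha_i))$ for each coordinate.

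Fix such a $p$. Splitting each factor of the product kernel as $\mathcal{H}_t^{\alpha_i}\lesssim A_i+B_i$ via Lemma \ref{upper} and expanding, $\int \mathcal{H}_t^\alpha(x,y)|f(y)|\,dy$ is bounded by a sum of $2^d$ terms indexed by subsets $S\subseteq\{1,\ldots,d\}$. Handling the $A_i$-factors ($i\in S$) by $M_i$ in variable $y_i$ and the $B_i$-factors ($i\notin S$) by Hölder at exponent $p$ in variable $y_i$, the $S$-term is dominated by
\[
\Phi_S(x,t)\cdot M_S F_S(x_S), \qquad F_S(x_S):=\|f(x_S,\cdot)\|_{L^p(dy_{S^c})},
\]
where $M_S=\prod_{i\in S}M_i$ and
\[
\Phi_S(x,t)=\prod_{i\notin S}x_i^{\alpha_i/2}\,t^{-\alpha_i/2-1/p}\,e^{-cx_i/t}\cdot\prod_{i\in S}e^{-ctx_i}.
\]
Taking $\sup_{0<t\le 1}$, the $t$-dependence of $\Phi_S$ is a balance between $e^{-c(\sum_{S^c}x_i)/t}$ and $e^{-ct\sum_S x_i}$, with optimum near $t^*\simeq \bigl(\sum_{S^c}x_i\big/\sum_S x_i\bigr)^{1/2}$; elementary calculus then yields a pointwise envelope $\widetilde{\Phi}_S(x)$. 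The central technical verification is that
\[
\int_{\mathbb{R}_+^{|S^c|}}\widetilde{\Phi}_S(x)^p\,dx_{S^c}\;\lesssim\; 1 \quad\text{uniformly in }x_S.
\]
Once this is granted, Fubini and the $L^p$-boundedness of the iterated one-dimensional maximal operator $M_S$ give $\|\Phi_S\cdot M_S F_S\|_{L^p(\mathbb{R}^d_+)}\lesssim\|F_S\|_{L^p(dx_S)}=\|f\|_p$. Summing the $2^d$ contributions and treating the $t>1$ regime via \eqref{large_t} (which merely adds one analogous term with $t$ replaced by $1$) completes the proof.

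\textbf{Main obstacle.} The hard step is the uniform-in-$x_S$ integrability of $\widetilde{\Phi}_S^p$ over $x_{S^c}$, which requires tracking how the power $t^{-a/2-1/p}$ and the two competing exponentials interact after the $t$-balance. Both strict inequalities $p_0<p<p_1$ enter here essentially: $p>p_0$ is what keeps $x_i^{p\alpha_i/2}$ integrable near $0$ for $i\notin S$, and $p<p_1$ is what forces the exponential remnant from the $t^*$-balance to retain enough decay to make the integral finite at infinity. Any push to an endpoint breaks this balance, which is the structural reason why the boundary cases require the separate, more delicate treatment of Sections \ref{sec:p1} and \ref{sec:p0}.
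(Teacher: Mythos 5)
Your overall strategy — split the kernel via Lemma~\ref{upper} into $2^d$ product terms indexed by $S\subseteq\{1,\dots,d\}$, handle the local piece with Hardy--Littlewood maximal operators and the tail piece with H\"older at exponent $p$ — is close in spirit to what the paper does in Sections~\ref{sec:p1} and~\ref{sec:p0}, but it is not what the paper does for this theorem, and more importantly, the step you flag as ``the central technical verification'' is false as stated.

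Consider the term $S=\emptyset$, where every coordinate is treated by H\"older. Already for $d=1$ your envelope is
$\widetilde{\Phi}_\emptyset(\xi)=\sup_{0<t\le 1}\xi^{a/2}\,t^{-a/2-1/p}\,e^{-c\xi/t}$.
Since $a/2+1/p>0$ precisely when $p<p_1(a)$, the supremum over $t$ is attained at $t^*\simeq\xi$ for small $\xi$, giving $\widetilde{\Phi}_\emptyset(\xi)\simeq\xi^{-1/p}$; hence $\int_0^1\widetilde{\Phi}_\emptyset(\xi)^p\,d\xi=\int_0^1\xi^{-1}\,d\xi=\infty$. The same collapse occurs for general $d$: with $\tau=\sum x_i$ and $\beta=\sum(\alpha_i/2+1/p)$, the $t$-optimization gives $\widetilde{\Phi}_\emptyset(x)\simeq\prod x_i^{\alpha_i/2}\,\tau^{-\beta}$ near $0$, and since $\beta p=\tfrac{p}{2}|\alpha|+d$ the radial change of variables yields $\int\widetilde{\Phi}_\emptyset^p\,dx\gtrsim\int_0^1\tau^{-1}\,d\tau=\infty$. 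So $\widetilde{\Phi}_\emptyset\in L^{p,\infty}\setminus L^p$, and your bound for each $p$ is only weak type $(p,p)$, not the strong-type bound you claim. (Your remark that ``$p>p_0$ is what keeps $x_i^{p\alpha_i/2}$ integrable near $0$'' is also misattributed: integrability of $x_i^{p\alpha_i/2}$ near $0$ requires $p<p_1(\alpha_i)$, whereas $p>p_0(\alpha_i)$ is what makes the H\"older dual integral $\int\eta^{\alpha_i p'/2}e^{-cp'\eta/t}\,d\eta$ converge.) The argument is salvageable: accept the weak-type $(p,p)$ conclusion for each $p\in(p_0,p_1)$ and then apply Marcinkiewicz interpolation between two interior exponents $p-\epsilon$ and $p+\epsilon$ to recover strong type at $p$. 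But that final step is missing from your write-up, and without it the direct ``uniform integrability'' claim fails.

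The paper's own proof sidesteps all of this with a much shorter observation. Because $\mathcal{H}_t^{\alpha}(x,y)=\prod_i\mathcal{H}_t^{\alpha_i}(x_i,y_i)$, Fubini and taking suprema coordinate by coordinate give the pointwise domination $\M f(x)\le(\mathcal{H}_*^{\alpha_1}\circ\cdots\circ\mathcal{H}_*^{\alpha_d})f(x)$, a composition of one-dimensional maximal operators acting in separate variables. It then suffices that each $\mathcal{H}_*^{\alpha_i}$ be bounded on $L^p(\R_+)$ for $p_0(\alpha_i)<p<p_1(\alpha_i)$, a range that contains $(p_0,p_1)$; that one-dimensional boundedness comes from Proposition~\ref{estimate} (weak type at $p_1(\alpha_i)$, restricted weak type at $p_0(\alpha_i)$) together with Marcinkiewicz interpolation, plus the references to \cite{Stem} and \cite{NoSj} when $\alpha_i\ge 0$. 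The interpolation thus happens once, at the one-dimensional level, and the $2^d$-term expansion and $t$-balance are avoided entirely.
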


\begin{proof}
We shall use the tensor product structure of $\mathcal{H}_t^{\alpha}(x,y)$
and the one-dimensional results. Observe that by Fubini's theorem
$$
\M f(x) \le \big( \mathcal{H}_*^{\alpha_1} \circ \cdots \circ \mathcal{H}_*^{\alpha_d} \big) f(x),
	\qquad x \in \R_+^d,
$$
where $\mathcal{H}_*^{\alpha_i}$ is the one-dimensional maximal operator acting on the $i$th
coordinate. Moreover, for each $i=1,\ldots,d$, we have 
$p_0(\alpha_i) \le p_0 < p_1 \le p_1(\alpha_i)$, where for $\alpha_i \ge 0$ we let 
$p_0(\alpha_i)=1$ and $p_1(\alpha_i)=\infty$. Thus it suffices to show that each
$\mathcal{H}_*^{\alpha_i}$ is bounded on $L^p$ provided that $p_0(\alpha_i)<p<p_1(\alpha_i)$.

In the case when $-1<\alpha_i<0$ this follows from Proposition \ref{estimate}, as commented
above. When $\alpha_i \ge 0$ it is enough to justify boundedness of $\mathcal{H}_*^{\alpha_i}$
on $L^{\infty}$ and from $L^1$ to $L^{1,\infty}$, since then the $L^p$ boundedness 
will follow by interpolation. The relevant $L^{\infty}$ result, however, is readily derived
from equation (3.7) in \cite{Stem} (we remark that $C$ is missing there), which implies
$$
\mathcal{H}_*^{\alpha_i} \boldsymbol{1}(x_i) = 
	\sup_{t>0} \int \mathcal{H}_t^{\alpha_i}(x_i,y_i)\, dy_i \le C, \qquad x_i \in \R_+.
$$
On the other hand, the weak type $(1,1)$ of $\mathcal{H}_*^{\alpha_i}$ was proved recently
by the authors, see \cite[Section 3.2]{NoSj}.
\end{proof}

Note that if $\widetilde{\alpha}\ge 0$ then $\M$ is bounded on $L^p(\R_+^d)$, $1<p\le \infty$,
and from $L^1(\R_+^d)$ to $L^{1,\infty}(\R_+^d)$, which is slightly stronger than
the statement of Theorem \ref{StNoSj}, see \eqref{hi}. This follows from the above estimate
of $\mathcal{H}_*^{\alpha_i} \boldsymbol{1}$, the weak type $(1,1)$ results in 
\cite[Section 3.2]{NoSj} and interpolation.

%%%%%%%%%%%%%%%%%%%%%%%%%%%%%%%%%%%%%%%%%%%%%%%%%%%%%%%%%%%%%%%%%%%%%%%%%%%%%%%%%%%%%%%%%%%%%%%%%%%%
\section{The endpoint $p_1$} \label{sec:p1}
%%%%%%%%%%%%%%%%%%%%%%%%%%%%%%%%%%%%%%%%%%%%%%%%%%%%%%%%%%%%%%%%%%%%%%%%%%%%%%%%%%%%%%%%%%%%%%%%%%%%

We work in dimension $d$ and assume that $-1<\widetilde{\alpha}<0$. 
The maximal operator under consideration is $\M$. We first observe that the results already 
obtained in Section \ref{sec:strong} can be used, in a straightforward
manner, to deal with the situation when there is only one minimal value $\alpha_i$. 
Indeed, assume that $\alpha_1$ is the only minimal $\alpha_i$. Due to the product structure
of $\mathcal{H}_t^{\alpha}(x,y)$, it suffices to use first the strong type $(p_1,p_1)$
estimate in the variables $x_2,\ldots, x_d$, and then apply the one-dimensional weak-type
result in $x_1$. This gives item (a2) of Theorem \ref{main_thm}.

Proving the remaining positive results is more complicated. For the sake of clarity,
we consider two main cases: when all $\alpha_i$ are equal (and so minimal) and when there
are precisely two minimal $\alpha_i$ in dimension $d=3$. This will be enough to prove the
positive part of Theorem \ref{main_thm} (b2). Counterexamples
in dimension $d=4$ and higher will be given at the end of this section.

\subsection{The case when all $\alpha_i$ are minimal} \label{ssec:amp1} \qquad \\
We assume that $d \ge 2$ and let $\widetilde{\alpha}=a$, with $-1<a<0$. The critical
exponents are $p_1 = -2/a$ and $p_0 = p'_1 = 2/(a+2)$.

\begin{thm} \label{pos}
Assume that $\alpha_i = a \in (-1,0)$ for all $i$. Then for $d = 2, 3$, the operator $\M$ maps 
$L^{p_1}$ boundedly into the space weak $L^{p_1}\log^{-(d-1)\slash p_1} L$, in the sense 
that for $f \in L^{p_1}$ the distribution function of $\M f$ satisfies
\begin{equation} \label{eq:positive}
	\big|\{\M f > \lambda\}\big| \le C \frac{\|f\|_{p_1}^{p_1}}{\lambda^{p_1}} 
	\bigg[\log\bigg(2 + \frac{\lambda}{\|f\|_{p_1}}\bigg)\bigg]^{d-1}, \qquad \lambda > 0.
\end{equation}
\end{thm}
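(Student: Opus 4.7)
The plan is to decompose the kernel via Lemma~\ref{upper}: each one-dimensional factor splits as $\mathcal{H}_t^a \lesssim L_t^a + B_t^a$, with $L_t^a$ the Gaussian-like local part and $B_t^a(\xi,\eta) = (\xi\eta)^{a/2} t^{-a-1} e^{-c(\xi+\eta)/t}$ the boundary part. Expanding the product over the $d$ coordinates yields a pointwise bound $\M f \lesssim \sum_{S \subseteq \{1,\ldots,d\}} T_S f$, where $T_S$ is the maximal operator whose kernel uses $L_t^a$ on $i \in S$ and $B_t^a$ on $i \notin S$. Each of the $2^d$ pieces is to be controlled separately, and the entire logarithmic factor will come from the boundary coordinates.

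The two extreme cases are immediate. For $S = \{1,\ldots,d\}$, the Gaussian kernel is dominated pointwise by the iterated one-dimensional Hardy--Littlewood maximal function, so $T_S f \lesssim \prod_i M_i |f|$ is strongly bounded on $L^{p_1}$. For $S = \emptyset$, applying H\"older's inequality in $y$ with the conjugate pair $(p_1,p_0)$, combined with the identity $\|y^{a/2} e^{-cy/t}\|_{p_0} \simeq t^{a+1}$ which absorbs the $t^{-a-1}$ factors, yields after $\sup_t$ the pointwise bound
\[
T_\emptyset f(x) \lesssim \|f\|_{p_1}\, e^{-c|x|} \prod_j x_j^{-1/p_1}.
\]
Its distribution function then reduces to the level set $\{\prod_j x_j^{-1} > C (\lambda/\|f\|_{p_1})^{p_1} e^{cp_1|x|}\}$, whose measure is estimated by Lemma~\ref{level}(c) with $\gamma = 1$, $\sigma \simeq p_1$, $\nu = (\lambda/\|f\|_{p_1})^{p_1}$, giving precisely $\lambda^{-p_1} \|f\|_{p_1}^{p_1} [\log(2 + \lambda/\|f\|_{p_1})]^{d-1}$. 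Thus the pure boundary piece alone already saturates the target inequality~\eqref{eq:positive}.

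For an intermediate $S$ with $\emptyset \subsetneq S \subsetneq \{1,\ldots,d\}$, I would apply H\"older $(p_1,p_0)$ only in the boundary coordinates $y_{S^c}$ and the Gaussian/maximal-function bound in the local coordinates $y_S$, then optimize $\sup_t e^{-ct|x_S| - c|x_{S^c}|/t}$ by AM--GM to produce the cross-decay $e^{-2c\sqrt{|x_S|\,|x_{S^c}|}}$. Setting $h_S(x_S) := \|\prod_{i\in S} M_i |f|(x_S, \cdot)\|_{L^{p_1}_{y_{S^c}}}$, which by Minkowski's inequality and the iterated Hardy--Littlewood bound satisfies $\|h_S\|_{L^{p_1}_{x_S}} \le \|f\|_{p_1}$, the resulting pointwise estimate is
\[
T_S f(x) \lesssim h_S(x_S)\, e^{-2c\sqrt{|x_S|\,|x_{S^c}|}} \prod_{j \notin S} x_j^{-1/p_1}.
\]
For each fixed $x_S$, Lemma~\ref{level}(c) with $\gamma = \tfrac12$ and $\sigma \propto |x_S|$ bounds the inner $x_{S^c}$-measure of $\{T_S f > \lambda\}$ by $(h_S(x_S)/\lambda)^{p_1} [\log(2 + (\lambda/h_S(x_S))^{p_1} |x_S|^{-|S^c|})]^{|S^c|-1}$, and integrating this over $x_S$ produces the final bound on $|\{T_S f > \lambda\}|$.

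For $d = 2$ every intermediate case has $|S^c| = 1$, the exponent $|S^c|-1$ vanishes, the integral collapses to $\|h_S\|_{p_1}^{p_1} \le \|f\|_{p_1}^{p_1}$, and one obtains strong $(p_1,p_1)$. The main obstacle is the case $d = 3$ with $|S| = 1$, $|S^c| = 2$, where one log factor is present and must be promoted, through the integration against $h_S^{p_1}$ over $x_1 \in \R_+$, into the full $[\log]^2$ required by the theorem. I expect this to proceed via the splitting $\log(2 + AB) \le \log(2+A) + \log(2+B)$, separating a term $\int h_S^{p_1} \log(2 + \lambda/h_S)\,dx_1$---handled by a layer-cake argument using only $\|h_S\|_{p_1} \le \|f\|_{p_1}$---from an $x_1$-weighted term $\int h_S^{p_1} \log(2 + 1/x_1)\,dx_1$, whose control requires exploiting that $h_S$ is itself a Hardy--Littlewood maximal function of a specific $L^{p_1}$ object, combined with a distributional splitting of $x_1$ according to whether $h_S(x_1) \lessgtr \lambda$. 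Balancing these two logarithms against the combined size of $h_S$, $x_1$, and $\lambda$ so that the total budget of $d-1=2$ logs is respected is the technical heart of the argument and the step where the dimensional threshold $d \le 3$ is essential.
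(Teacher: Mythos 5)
Your decomposition is essentially the same as the paper's: split each one-dimensional kernel into a local Gaussian part and a boundary part via Lemma~\ref{upper}, expand into $2^d$ terms indexed by the set of boundary coordinates, treat the two extreme terms directly, and reduce the intermediate terms via AM--GM and Lemma~\ref{level}(c). Your handling of the extreme cases $S=\{1,\dots,d\}$ (iterated Hardy--Littlewood) and $S=\emptyset$ (H\"older plus Lemma~\ref{level}(c) with $\gamma=1$) coincides with the paper, and you correctly identify the hard intermediate case as $d=3$, $|S|=1$, $|S^c|=2$.

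There is, however, a real gap in the intermediate case, and it is exactly where you say the work lies. You use AM--GM to bound $\sup_{t}e^{-ct|x_S|-c|x_{S^c}|/t}$ by $e^{-2c\sqrt{|x_S|\,|x_{S^c}|}}$ and then apply Lemma~\ref{level}(c) with $\sigma\propto|x_S|$. Because $\sigma$ can be arbitrarily small, the resulting log factor carries a term $\log(2+|x_S|^{-|S^c|}\cdot\ldots)$ that blows up as $|x_S|\to0$, and the final $x_S$-integral contains a piece of the form $\int h_S^{p_1}\log(2+1/|x_S|)\,dx_S$. This is \emph{not} controlled by $\|h_S\|_{p_1}^{p_1}$ alone; an $L^{p_1}$ function $h_S$ concentrated near the origin can make it diverge, and you cannot fix this by invoking the maximal-function structure of $h_S$ without a concrete argument. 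The paper avoids this entirely with a small but crucial observation: since the decomposition only applies for $t\le1$, one has $e^{-ct|x_S|}\lesssim e^{-ct(1+|x_S|)}$, i.e.\ one may insert a $+1$ in the local sum at the cost of a harmless constant. AM--GM then yields $e^{-c\sqrt{(1+|x_S|)\,|x_{S^c}|}}$, so in Lemma~\ref{level}(c) one takes $\sigma\simeq1+|x_S|\ge1$. Now $\sigma^{-|S^c|}\le1$ is bounded, the logarithm never blows up, and the $x_S$-integration closes via the monotonicity of $s^{p_1}[\log(2+s^{-p_1})]^{|S^c|-1}$ together with the convergence of $\int(1+|x_S|)^{-|S^c|}dx_S$, which holds precisely because $|S^c|>|S|$ (equivalently $d'>d''$), hence only for $d\le3$. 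Without this $+1$ step, your sketch does not close; with it, it reduces to the paper's Lemma~\ref{two}.

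One small terminology slip: for $|S^c|=1$ you obtain weak type $(p_1,p_1)$ from the bound $|\{T_Sf>\lambda\}|\lesssim\|f\|_{p_1}^{p_1}/\lambda^{p_1}$, not strong type; this matches the paper's Lemma~\ref{one} and is enough for the theorem.
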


To estimate $\int \mathcal{H}_t^{\alpha}(x,y) |f(y)|\,dy$,
we shall integrate one variable $y_j$ at a time, and apply Proposition \ref{estimate}
in each variable. This will produce a sum of $2^d$ terms, since for each variable we consider 
separately the two terms in the right-hand side of \eqref{est}.

To describe these $2^d$ terms, it is convenient to introduce the following notation. 
Let $D'$ be a subset of $\{1,\ldots, d\}$, and write $D''$ for its complement. 
By $d'$ and $d'' = d-d'$ we denote the number of elements of $D'$ and $D''$, respectively.
Given $x \in \R_+^d$, we let $x' \in \R_+^{d'}$ consist of those coordinates $x_j$ with 
$j\in D'$, and similarly for $x'' \in \R_+^{d''}$. Thus we can write $x = (x', x'')$.
We also denote
$$
M'' = \prod_{j\in D''}M_j,
$$
and observe that this product of one-dimensional maximal operators is bounded on 
$L^p(\R_+^{d''})$ for $1<p<\infty$.

When integrating $\mathcal{H}_t^{\alpha}(x,y) |f(y)|$ with respect to $y_j$, we consider the second term 
in the right-hand side of \eqref{est} if $j\in D'$, and the first term if $j\in D''$. 
Integrating in $y''$ before $y'$, we are led to expressions
\begin{equation} \label{eq:term}
T_t^{D'} f(x) = \exp\bigg(-\frac{c}{t}\sum_{j\in D'}x_j - ct\sum_{j\in D''}x_j\bigg) 
\prod_{j\in D'} x_j^{-1/p_1}\, \|M'' f(\cdot,x'')\|_{L^{p_1}(\R_+^{d'})}.
\end{equation}
Altogether we conclude
$$
\int \mathcal{H}_t^{\alpha}(x,y) |f(y)|\,dy \lesssim \sum_{D'} T_t^{D'} f(x);
$$
here the sum is taken over all possible choices of $D'$. 
We shall find estimates for each operator 
$$
T_{*}^{D'} f(x) = \sup_{0<t\leq 1} T_t^{D'} f(x); 
$$
clearly, $\M f \lesssim \sum_{D'} T_*^{D'}f$. Observe first that in the simple case when 
$D' = \emptyset$, the operator $T_{*}^{D'}$ is bounded on $L^{p_1}$. At the opposite 
extreme when  $D' = \{1,\ldots, d\}$, we can estimate the exponential in \eqref{eq:term}
by $\exp(-c \sum x_j)$. Then from  Lemma \ref{level} (c) with $\gamma = 1$ and
$\sigma \simeq 1$, we see that $T_{*}^{D'}$ satisfies an estimate similar to (\ref{eq:positive}). 

The next two results say, roughly speaking, that  $T_{*}^{D'}$ can be
controlled also when we are close to these two extreme cases.

\begin{lem} \label{one}
If $d' = 1$ then $T_{*}^{D'}$ is of weak type $(p_1, p_1)$.
\end{lem}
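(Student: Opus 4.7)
The plan is to exploit the tensor-product form of $T_t^{D'} f$ when $d'=1$, which decouples the single variable indexed by $D'$ from the remaining ones. Setting $D' = \{i\}$ and writing
$$G(x'') := \big\|M''f(\cdot, x'')\big\|_{L^{p_1}(\R_+)},$$
the formula \eqref{eq:term} becomes
$$T_t^{D'} f(x) = \exp\Big(-\frac{c x_i}{t} - ct \sum_{j \neq i} x_j\Big)\, x_i^{-1/p_1}\, G(x'').$$
Since the exponential factor never exceeds $1$, the supremum over $0<t\le 1$ satisfies the crude pointwise bound
$$T_{*}^{D'} f(x) \le x_i^{-1/p_1}\, G(x'').$$

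Next I would compute the distribution function by Fubini, integrating first in $x_i$. For each fixed $x''$,
$$\big|\{x_i > 0 : x_i^{-1/p_1} G(x'') > \lambda\}\big| = \lambda^{-p_1}\, G(x'')^{p_1},$$
and integrating in $x''$, together with the very definition of $G$, gives
$$\big|\{T_{*}^{D'} f > \lambda\}\big| \le \lambda^{-p_1} \int_{\R_+^{d-1}} G(x'')^{p_1}\, dx'' = \lambda^{-p_1}\, \|M''f\|_{L^{p_1}(\R_+^d)}^{p_1}.$$
Since $M''$ is a composition of one-dimensional Hardy--Littlewood maximal operators and $p_1 > 2$, the $L^{p_1}$ boundedness of $M''$ noted just before \eqref{eq:term} yields $\|M''f\|_{p_1} \lesssim \|f\|_{p_1}$, which gives the desired weak-type $(p_1,p_1)$ inequality.

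There is essentially no obstacle here: the exponential decay in $t$ plays no role, and everything reduces to the elementary observation that $\{x_i : x_i^{-1/p_1} > \mu\}$ has measure exactly $\mu^{-p_1}$. The reason the argument works only for $d'=1$ is that a single factor $x_i^{-1/p_1}$ produces a clean weak-type bound after integration in the remaining variables, whereas for $d'\ge 2$ the product $\prod_{j\in D'} x_j^{-1/p_1}$ forces one into the iterated level-set analysis of Lemma \ref{level}, which is where the logarithmic factor in Theorem \ref{pos} originates.
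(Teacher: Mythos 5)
Your proof is correct and follows essentially the same route as the paper's: drop the exponentials, compute the exact measure of the one-dimensional level set $\{x_i : x_i^{-1/p_1}G(x'')>\lambda\}$ for fixed $x''$, integrate in $x''$, and invoke the $L^{p_1}$ boundedness of $M''$. The only cosmetic difference is that you note the level-set measure is exactly $\lambda^{-p_1}G(x'')^{p_1}$ rather than merely bounding it, which is fine.
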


\begin{lem}\label{two}
If $d' > d''$ then $T_{*}^{D'}$ maps $L^{p_1}$ boundedly into
weak $L^{p_1}\log^{-(d'-1)\slash p_1} L$, i.e.,
$$
|\{T_{*}^{D'}f > \lambda\}| \le C \frac{ \|f\|_{p_1}^{p_1}}{\lambda^{p_1}} 
	\bigg[\log\bigg(2 + \frac{\lambda}{\|f\|_{p_1}}\bigg)\bigg]^{d'-1},
	 \qquad \lambda > 0.
$$
\end{lem}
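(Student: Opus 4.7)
My plan is to exploit the explicit form of $T_*^{D'}f$ obtained from \eqref{eq:term}. Optimizing $c|x'|/t + ct|x''|$ over $t \in (0,1]$, the supremum regime splits according to $|x'|$ versus $|x''|$: when $|x'| \le |x''|$ the interior critical point gives $\phi^* = 2c\sqrt{|x'|\,|x''|}$, whereas when $|x'| > |x''|$ the boundary $t = 1$ is optimal and $\phi^* = c(|x'|+|x''|)$. Writing $g(x'') := \|M''f(\cdot,x'')\|_{L^{p_1}(\R_+^{d'})}$ and noting $\int g^{p_1}\,dx'' \le C\|f\|_{p_1}^{p_1}$ by the $L^{p_1}$-boundedness of $M''$, we thus have
\[
T_*^{D'}f(x) \simeq e^{-\phi^*(x)}\prod_{j \in D'} x_j^{-1/p_1}\,g(x''),
\]
and I would split $\{T_*^{D'}f > \lambda\}$ into its Case A ($|x'| \le |x''|$) and Case B ($|x'| > |x''|$) pieces, estimating each separately; the different nature of the decay in the two regimes means different instances of Lemma \ref{level} are appropriate.

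For fixed $x''$ with $s = |x''|$ and $\Lambda = \lambda^{p_1}$, the Case A slice is controlled using two complementary bounds: Lemma \ref{level}(a) applied on the cube $(0,s)^{d'}$ (which uses the restriction $|x'| \le s$ but forgets the exponential), and Lemma \ref{level}(c) with $\gamma = 1/2$, $\sigma \simeq s$ (which uses the sub-Gaussian decay but forgets the restriction). Taking their minimum, since the logarithm is monotone, yields
\[
|A_\lambda(x'')| \,\le\, C\,\frac{g(x'')^{p_1}}{\Lambda}\,\Big[\log\Big(2 + \rho(s)\,\frac{\Lambda}{g(x'')^{p_1}}\Big)\Big]^{d'-1},
\qquad \rho(s) := \min(s^{d'}, s^{-d'}) \le 1.
\]
The Case B slice, benefiting from the full exponential decay $\phi^* = c(|x'|+s)$, is handled by a single application of Lemma \ref{level}(c) with $\gamma = 1$, $\sigma \simeq 1$, giving $|B_\lambda(x'')| \le C\Lambda^{-1}g(x'')^{p_1}\,e^{-cs}\,[\log(2+\Lambda e^{cs}/g(x'')^{p_1})]^{d'-1}$.

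The remaining work is integration in $x''$. I would apply the elementary splits $\log(2+XY) \le \log(2+X) + \log(2+Y)$ and $\log(2+Ae^B) \le \log(2+A)+B$, together with the pointwise bound $u\,[\log(2+1/u)]^{d'-1} \le C(u+1)$ valid on $(0,\infty)$. For Case A with $u = g^{p_1}/\rho$ the integrand reduces to a constant times $g^{p_1} + \rho(|x''|)$, and for Case B the factor $e^{-cs}$ absorbs any polynomial growth in $s$, leaving integrals of type $\int g^{p_1}\,dx''$ and $\int e^{-cs}\,dx''$. The main obstacle --- and the only place where the hypothesis $d' > d''$ genuinely enters --- is the finiteness of $\int_{\R_+^{d''}}\rho(|x''|)\,dx''$: in generalized polar coordinates this equals a constant times $\int_0^\infty \min(s^{d'}, s^{-d'})\,s^{d''-1}\,ds$, whose tail $\int_1^\infty s^{d''-1-d'}\,ds$ converges precisely when $d' > d''$. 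This dimensional threshold furnishes the "room" that yields the sharp weak-type b
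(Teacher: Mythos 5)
Your argument is correct but takes a genuinely different route from the paper's. The paper first replaces $\sum'' x_j$ by $1 + \sum'' x_j$ (legitimate since $t\le 1$, at the price of a multiplicative constant), applies AM--GM once to eliminate $t$, and then uses a \emph{single} instance of Lemma~\ref{level}(c) with $\gamma=1/2$ and $\sigma\simeq 1+\sum''x_j$; the hypothesis $d'>d''$ enters through finiteness of $\int (1+\sum''x_j)^{-d'}\,dx''$. You instead optimize $t$ exactly and split the level set according to whether $|x'|\le|x''|$ or not, applying the minimum of Lemma~\ref{level}(a) and \ref{level}(c) (with $\gamma=1/2$, $\sigma\simeq|x''|$) in the first regime and Lemma~\ref{level}(c) with $\gamma=1$ and fixed $\sigma$ in the second. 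Forgoing the $+1$ regularization costs you a case split and an extra lemma, but the net effect is equivalent: your $\rho(s)=\min(s^{d'},s^{-d'})$ plays exactly the role of the paper's $(1+\sum''x_j)^{-d'}$, and both integrability conditions pinpoint $d'>d''$ identically. Your $x''$-integration (elementary $\log$ inequalities plus $u[\log(2+1/u)]^{d'-1}\lesssim u+1$) likewise differs from but parallels the paper's device (splitting on the size of $\|M''f(\cdot,x'')\|_{p_1,d'}(1+\sum''x_j)^{d'/p_1}/\lambda$ and using monotonicity of $s^{p_1}[\log(2+s^{-p_1})]^{d'-1}$). Two small remarks: you should normalize $\|f\|_{p_1}=1$ at the outset, since your additive constants (e.g.\ $\int\rho\,dx''$) do not scale with $\|f\|_{p_1}$ and homogeneity of $T_*^{D'}$ must be invoked to recover the general form; and your Case~B bound should carry $e^{\pm c p_1 s}$ rather than $e^{\pm cs}$, which is harmless but worth noting when tracking the Lemma~\ref{level}(c) parameters.
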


These observations and lemmas together cover all possible choices of $D'$ for $d \leq 3$, 
so Theorem \ref{pos} follows once we prove Lemmas \ref{one} and \ref{two}. 

The possibilities for $D'$ not covered by the above are described by the inequalities 
$2\leq d' \leq d''$. From the proof of Theorem \ref{neg} given later,
it can be seen that $T^{D'}_{*}$ cannot be controlled on $L^{p_1}$ in these cases. 

In the sequel, we assume, without loss of generality, that
$D' = \{1,\ldots, d'\}$. Further, we write ${\sum}', \, {\prod}', \, {\sum}'', \, {\prod}''$ 
for sums and products taken over $1 \leq j \leq d'$ and $d' < j \leq d$, respectively.

\begin{proof}[Proof of Lemma \ref{one}]
Splitting points in $\R_+^d$ as $x = (x_1, x'')$ and suppressing the exponential 
factors in \eqref{eq:term}, we get
$$
T_{*}^{D'} f(x) \le  x_1^{-1\slash p_1}\, \|M'' f(\cdot,x'')\|_{L^{p_1}(\R_+)}.
$$
For any fixed $x''$, it is clear that the set of points $x_1$
where this expression exceeds a level $\lambda>0$ has one-dimensional measure at most 
$$
\frac{C}{\lambda^{p_1}} \|M'' f(\cdot,x'')\|_{L^{p_1}(\mathbb{R}_+)}^{p_1}.
$$
Integrating in $x''$, we conclude that 
$$
|\{T_{*}^{D'} f > \lambda\}| \le
\frac{C}{\lambda^{p_1}} \int \big[ M''f(x)\big]^{p_1}\,dx 
	\leq \frac{C}{\lambda^{p_1}} \int |f(x)|^{p_1}\,dx,
$$
which finishes the proof.
\end{proof}

\begin{proof}[Proof of Lemma \ref{two}]
Without loss of generality, we assume $0\leq f \in L^{p_1}$, with norm 1. 
Since $t \le 1$, we can replace the sum over $D''$ in \eqref{eq:term} by
$1+{\sum}'' x_j$, to get
$$
T_t^{D'} f(x) \lesssim
\exp\bigg(-\frac{c}{t} \, {\sum}' x_j - c t \, \Big(1 + {\sum}'' x_j\Big)\bigg)
\: {\prod}' x_j^{-1\slash p_1} \: \|M''f(\cdot,x'')\|_{p_1,d'},
$$
where  $\|\cdot\|_{p_1,d'}$ denotes the norm in $L^{p_1}(\mathbb{R}_+^{d'})$.
In order to eliminate $t$, we then use the inequality between the geometric and arithmetic 
means to estimate the exponential. The conclusion is
$$
T_*^{D'} f(x) \lesssim \exp\bigg(-c \sqrt{ \Big(1+{\sum}'' x_j\Big)\,{\sum}' x_j}\;\bigg)
	\: {\prod}' x_j^{-1\slash p_1} \: \|M''f(\cdot,x'')\|_{p_1,d'}.
$$

We now fix $x''$ and apply Lemma \ref{level} (c) in the $x'$ variables,
with $\gamma = 1/2$ and  $\sigma \simeq 1+{\sum}'' x_j$. Thus for $\lambda > 0$ the set of
points $x'$ where $T_{*}^{D'} f(x', x'') > \lambda$ has $d'$-dimensional measure at most
$$
\frac{C}{\lambda^{p_1}} \|M''f(\cdot,x'')\|_{p_1,d'}^{p_1} 
\bigg[\log\bigg(2 + \Big(1+{\sum}'' x_j\Big)^{-d'} \lambda^{p_1}\big\slash 
\|M''f(\cdot,x'')\|_{p_1,d'}^{p_1}\bigg)\bigg]^{d'-1}.
$$
In order to estimate the $d$-dimensional measure of the level set, 
we must integrate this quantity in $x''$. The integral over those $x''$ for which 
$$
\Big(1+{\sum}'' x_j\Big)^{-d'\slash p_1} \, 
\lambda \big\slash \|M''f(\cdot,x'')\|_{p_1,d'} < 1+\lambda
$$
is easy to handle. Indeed, here the logarithm is at most 
$\log(2 + (1+\lambda)^{p_1}) \lesssim \log (2 + \lambda)$, 
and in view of Fubini's theorem, this integral is dominated by 
$$
\frac{C}{\lambda^{p_1}} \|M''f\|_{p_1}^{p_1} 
\big[\log (2 + \lambda)\big]^{d'-1} \le \frac{C}{\lambda^{p_1}} \big[\log (2 + \lambda)\big]^{d'-1}, 
$$
since $\|f\|_{p_1} = 1.$
This agrees with the right-hand side of the inequality in Lemma \ref{two}. 

What remains is the integral where
$$
\|M''f(\cdot,x'')\|_{p_1,d'} \Big(1+{\sum}'' x_j\Big)^{d'\slash p_1} \big\slash \lambda 
	< \frac{1}{1+\lambda}.
$$
There we write the integrand as
\begin{align*}
& C \Big(1+{\sum}'' x_j \Big)^{-d'} \bigg(\|M''f(\cdot,x'')\|_{p_1,d'} 
\Big(1+{\sum}'' x_j\Big)^{d'\slash p_1}\big \slash \lambda \bigg)^{p_1} \\ 
& \times \bigg[\log\bigg(2 +\|M''f(\cdot,x'')\|_{p_1,d'}^{-p_1} \Big(1+{\sum}'' x_j\Big)^{-d'} \,
	\lambda^{p_1}\bigg)\bigg]^{d'-1}.
\end{align*}
The function $s^{p_1}[\log(2+s^{-p_1})]^{d'-1}$ is increasing for $s>0$, up to 
a constant factor. Hence, in the region considered, we can estimate the last expression by
$$
C \Big(1+{\sum}'' x_j\Big)^{-d'} (1+\lambda)^{-p_1} 
\big[\log\big(2 + (1+\lambda)^{p_1}\big)\big]^{d'-1}.
$$
Since $d' > d''$, this quantity is integrable with respect to $x''$, and the integral will be at most
$C(1+\lambda)^{-p_1}[\log(2+\lambda)]^{d'-1}$. This completes the proof.
\end{proof}

\subsection{The case of two minimal $\alpha_i$ in dimension $3$} \label{ssec:2m3} 
	\qquad \\
Now $d=3$. Without any loss of generality we may assume that $\alpha=(a,a,b)$
with $-1<a<0$ and $a<b$. The critical exponents are as in the preceding subsection.
\begin{thm} \label{pos_s}
Let $d=3$ and $\alpha$ be as above. 
Then for $f \in L^{p_1}$ the distribution function of $\M f$ satisfies
\begin{equation*}
\big|\{\M f > \lambda\}\big| \le C \frac{\|f\|_{p_1}^{p_1}}{\lambda^{p_1}} 
\log\bigg(2 + \frac{\lambda}{\|f\|_{p_1}}\bigg), \qquad \lambda > 0.
\end{equation*}
\end{thm}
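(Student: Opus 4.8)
The strategy is to run the same $2^d = 8$-term decomposition as in Subsection \ref{ssec:amp1}, but to exploit the fact that the third coordinate has parameter $b > a$, so that the $x_3$-integration can always be handled by a genuine $L^{p_1}$ estimate rather than a weak-type one. Concretely, integrating $\mathcal{H}_t^\alpha(x,y)|f(y)|$ one variable at a time and using Proposition \ref{estimate} in the first two variables (where $\alpha_i = a$) gives for each variable a choice between the two terms of \eqref{est}; in the third variable, since $p_0(b) < p_0 < p_1 < p_1(b)$, the one-dimensional operator $\mathcal{H}_*^b$ is bounded on $L^{p_1}$, so we may simply write $\mathcal{H}_*^b$ acting on $x_3$ and never pay a logarithm there. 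Thus $\M f \lesssim \mathcal{H}_*^b \big( \sum_{D'} T_*^{D'} g \big)$ where the sum is over subsets $D' \subseteq \{1,2\}$, the operators $T_*^{D'}$ act only in $(x_1,x_2)$ as in \eqref{eq:term}, and $g$ involves a maximal operator in $x_3$; boundedness of $\mathcal{H}_*^b$ on $L^{p_1}$ lets us reduce to estimating each $T_*^{D'}$ on $L^{p_1}(\R_+^2)$ (with values in the appropriate Lorentz–Zygmund space), uniformly with respect to the frozen variable.

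The four cases for $D' \subseteq \{1,2\}$ are then: (i) $D' = \emptyset$, where $T_*^{D'}$ is bounded on $L^{p_1}$; (ii) and (iii) $d' = 1$, where Lemma \ref{one} applies and gives weak type $(p_1,p_1)$, i.e.\ no logarithm; (iv) $D' = \{1,2\}$, where $d' = 2 > d'' = 0$, so Lemma \ref{two} applies and yields boundedness into weak $L^{p_1}\log^{-1/p_1}L$, that is, the single logarithmic factor appearing in the statement. Combining the four contributions, the worst one is case (iv) with exponent $d'-1 = 1$ on the logarithm, which matches the claimed bound; the remaining three contribute terms with no logarithm, hence are absorbed. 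The composition with $\mathcal{H}_*^b$ preserves these estimates because $\mathcal{H}_*^b$ is bounded on $L^{p_1}$ and on the relevant weak-type Orlicz space (or, more cleanly, one applies $\mathcal{H}_*^b$ on $L^{p_1}$ \emph{before} the weak-type step in $(x_1,x_2)$, exactly as in the proof of item (a2) in Section \ref{sec:p1}).

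The one point requiring care — and the main obstacle — is the bookkeeping of the order in which the variables are integrated, so that the $L^{p_1}$-boundedness in $x_3$ can legitimately be pulled outside before the weak-type arguments in $x_1, x_2$ are invoked. One must check that the exponential factors $\exp(-\frac{c}{t}\sum_{j\in D'} x_j - ct\sum_{j\in D''} x_j)$ and the maximal operators $M''$ in \eqref{eq:term} interact correctly with the $x_3$-integration, and in particular that freezing $x_3$ and applying Lemmas \ref{one} and \ref{two} fibrewise, followed by Fubini and the $L^{p_1}$ bound for $\mathcal{H}_*^b$, does not lose the sharp logarithmic exponent. Once this ordering is set up, the estimate is a direct consequence of the lemmas already established, with no new computation needed; the gain of $b > a$ over the third variable is precisely what reduces the exponent from $\widetilde d(\alpha)-1 = 2$ (which would be the naive count) down to $1$.
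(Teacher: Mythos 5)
Your proposal has a genuine gap at exactly the point you flag as ``requiring care.'' The estimate $\M f \le \mathcal{H}_*^{(a,a)} \circ \mathcal{H}_*^b f$, obtained by decoupling the supremum over $t$ in the third variable from the supremum in the first two, is a real loss, and the decoupled operator \emph{does not} satisfy the bound of Theorem~\ref{pos_s}. To see this, take
$f = \mathbf{1}_{(1/2,1)}\otimes\mathbf{1}_{(1/2,1)}\otimes N^{-1/p_1}\mathbf{1}_{(1, N+1)}$,
so $\|f\|_{p_1}=1$. By Lemma~\ref{lower}~(a), choosing $t=x_3^{-1}$ in the third variable gives $\mathcal{H}_*^b f \gtrsim N^{-1/p_1}$ on $(1/2,1)^2 \times (1,N)$. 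Applying $\mathcal{H}_*^{(a,a)}$ with a \emph{different} $t'\simeq 1/2$ and using Lemma~\ref{lower}~(b) in the first two variables then gives
$\mathcal{H}_*^{(a,a)}\big(\mathcal{H}_*^b f\big)(x) \gtrsim N^{-1/p_1}(x_1x_2)^{-1/p_1}$
for $(x_1,x_2)\in(0,1)^2$, $x_3\in(1,N)$. By Lemma~\ref{level}~(b), the $\lambda$-level set of the composed operator then has measure $\gtrsim \lambda^{-p_1}\log(2+N\lambda^{p_1})$, which is unbounded in $N$ for fixed $\lambda$, whereas the claimed bound is $\lambda^{-p_1}\log(2+\lambda)$. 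Equivalently, the fibrewise estimate you propose after freezing $x_3$ involves $\log\big(2+\lambda/\|g(\cdot,\cdot,x_3)\|_{p_1}\big)$, and $\int h(x_3)\log(2+\lambda^{p_1}/h(x_3))\,dx_3$ with $h(x_3)=\|g(\cdot,\cdot,x_3)\|_{p_1}^{p_1}$ is simply not controlled by $\|h\|_1\log(2+\lambda^{p_1}/\|h\|_1)$ when $h$ is spread thin, which the example realizes. So the ``bookkeeping'' point cannot be made to work by any ordering: the decoupled operator genuinely fails the endpoint estimate.

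What the paper actually does for $\alpha=(a,a,b)$ is keep a single $t$ across all three variables. For the $d'<3$ pieces it uses the pointwise kernel bound $\mathcal{H}_t^b\lesssim\mathcal{H}_t^a$ for $t\le1$ (eq.~\eqref{pqw}) and invokes Lemmas~\ref{one} and~\ref{two} from the all-minimal case, which already give at most $\log^1$. The crux is the piece $d'=3$: the naive all-$a$ estimate (Lemma~\ref{two} with $d'=3$, $d''=0$) would give $\log^2$, so the paper proves Lemma~\ref{lpos_s} instead, decomposing dyadically in $t=2^{-k}$ and in the scales $\nu,\beta$ of $x_3,y_3$ relative to $t$; the gap $\varepsilon=(b-a)/2>0$ produces a decaying factor $2^{-\varepsilon(|\nu|+|\beta|)}$ that is summable over $\nu,\beta$ and keeps the log exponent at $1$. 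That cross-variable coupling in $t$ is exactly what your decomposition discards.

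One small point: in your final sentence the naive count is $d-1=2$, not $\widetilde d(\alpha)-1$; here $\widetilde d(\alpha)-1=1$ already, and what the gap $b>a$ is needed for is to beat $d-1$, not $\widetilde d(\alpha)-1$.
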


To prove Theorem \ref{pos_s}, we estimate the kernel $\mathcal{H}_t^{(a,a,b)}$ by applying
the inequality \eqref{main} in each variable. 
Then a sum of $8$ terms emerges; as before, we index these terms 
by subsets $D' \subset \{1,2,3\}$ and let primed variables correspond to the second term 
in \eqref{main}.

Next, we observe that only the term corresponding to $D'=\{1,2,3\}$ requires further analysis.
Indeed, from the asymptotics \eqref{asymp} it follows immediately that
\begin{equation} \label{pqw}
\mathcal{H}_t^{b}(\xi,\eta) \lesssim \mathcal{H}_t^{a}(\xi,\eta), \qquad \xi,\eta \in \R_+, \quad 
0 < t \le 1,
\end{equation}
and therefore the cases when $d'<3$ are covered by the results of Section \ref{ssec:amp1},
see Lemmas \ref{one} and \ref{two}.

Assume then that $d'=3$. The kernel under consideration is
$$
H_t(x,y) = \frac{(x_1 x_2 y_1 y_2)^{a\slash 2}}{t^{2(a+1)}} \frac{(x_3 y_3)^{b\slash 2}}{t^{b+1}}
	\exp\Big( - c \sum_{j=1}^3 \frac{x_j + y_j}{t} \Big),
$$
and we are interested in the maximal operator 
$H_*f(x) = \sup_{0 < t \le 1} \int H_t(x,y) |f(y)|\, dy$. 
Theorem \ref{pos_s} will be proved once we verify the following.
\begin{lem} \label{lpos_s}
For $f \in L^{p_1}$ the distribution function of $H_* f$ satisfies
\begin{equation*}
\big|\{H_* f > \lambda\}\big| \le C \frac{ \|f\|_{p_1}^{p_1}}{\lambda^{p_1}} 
\log\bigg(2 + \frac{\lambda}{\|f\|_{p_1}}\bigg), \qquad \lambda > 0.
\end{equation*}
\end{lem} 

\begin{proof}
Observe that
$$
H_* f(x) \simeq \sup_{k\ge 0} \int H_{2^{-k}}(x,y) |f(y)|\, dy 
	\le \sum_{k\ge 0} \int H_{2^{-k}}(x,y) |f(y)| \, dy.
$$
Splitting now the kernel in the third variable according to the dyadic intervals
$2^{-k-\nu} < x_3 \le 2^{-k-\nu+1}$ and $2^{-k-\beta} < y_3 \le 2^{-k-\beta+1}$,
$\nu, \beta \in \Z$ (written shortly $x_3 \sim 2^{-k-\nu}$ and $y_3 \sim 2^{-k-\beta}$),
we get
$$
H_*f(x) \lesssim \sum_{\nu,\beta \in \Z} \sum_{k\ge 0} H_{k,\nu,\beta}f(x), \qquad x \in \R^3_+,
$$
where
$$
H_{k,\nu,\beta}f(x) = \int_{\R^3_+} H_{2^{-k}}(x,y) 
	\chi_{\{x_3 \sim 2^{-k-\nu},\; y_3 \sim 2^{-k-\beta}\}} |f(y)|\, dy.
$$
Since
\begin{align} \nonumber
H_{k,\nu,\beta}f(x) \lesssim & \,2^{(2a+3)k}2^{-(\nu+\beta)b\slash 2} 
	\exp\big( -c(2^{-\nu}+2^{-\beta})\big) \chi_{\{x_3 \sim 2^{-k-\nu}\}} (x_1 x_2)^{a\slash 2}
		\\ & \times \exp\big( -c 2^k (x_1+x_2) \big) \int_{y_3 \sim 2^{-k-\beta}} (y_1 y_2)^{a\slash 2}
			\exp\big( -c 2^k (y_1+y_2) \big) |f(y)| \, dy, \label{bs}
\end{align}
an application of H\"older's inequality leads to the estimate (recall that $a = -2\slash p_1$)
\begin{align*}
H_{k,\nu,\beta}f(x) \lesssim & \, 2^{3k} 2^{-4k \slash p_1} 2^{-(\nu+\beta)b \slash 2}
	\exp\big( -c(2^{-\nu}+2^{-\beta}) \big) 2^{-(k+\beta)\slash p_0} 2^{2k\slash p_1 - 2k \slash p_0}
	(x_1 x_2)^{-1\slash p_1} \\ & \times 
	\exp\big( - c 2^k (x_1+x_2) \big) \chi_{\{x_3 \sim 2^{-k-\nu}\}}
		\bigg( \int_{y_3 \sim 2^{-k-\beta}} |f(y)|^{p_1} dy \bigg)^{1\slash p_1}.
\end{align*}
A short computation shows that the constant factor in the last expression equals
$$
2^{k \slash p_1} 2^{\nu\slash p_1} 2^{-\delta \beta} 2^{-(\nu+\beta)\varepsilon}
	\exp\big( -c(2^{-\nu}+2^{-\beta})\big),
$$
where $\delta = 1\slash p_0-1\slash p_1 >0$ and $\varepsilon = (b-a)\slash 2 >0$. Therefore
\begin{align*}
H_{k,\nu,\beta}f(x) \lesssim & \, 2^{k\slash p_1} 2^{\nu \slash p_1} 2^{-\varepsilon(|\nu|+|\beta|)}
	(x_1 x_2)^{-1\slash p_1} \exp\big( - c 2^k (x_1+x_2) \big) \chi_{\{x_3 \sim 2^{-k-\nu}\}}
		\\ & \times \bigg( \int_{y_3 \sim 2^{-k-\beta}} |f(y)|^{p_1} dy \bigg)^{1\slash p_1}.
\end{align*}

Consequently, we see that the condition
$H_{k,\nu,\beta} f(x) > c \lambda 2^{-\varepsilon (|\nu|+|\beta|)\slash 2}$ implies
$$
\chi_{\{x_3 \sim 2^{-k-\nu}\}} (x_1 x_2)^{-1} \exp\big( - c 2^k (x_1+x_2) \big)
	\gtrsim 2^{-k-\nu} 2^{\varepsilon (|\nu|+|\beta|)p_1\slash 2}
		\bigg( \int_{y_3 \sim 2^{-k-\beta}} \!\!|f(y)|^{p_1} dy \bigg)^{-1} \lambda^{p_1}.
$$
Applying now Lemma \ref{level} (c) with $d=2$ in the first two variables, we get
\begin{align*}
& \big|\big\{x \in \R^3_+ : H_{k,\nu,\beta}f(x) > c \lambda 2^{-\varepsilon (|\nu|+|\beta|)\slash 2} \big\}\big|
\lesssim 2^{-\varepsilon (|\nu|+|\beta|)p_1\slash 2} 
\frac{1}{\lambda^{p_1}} \int_{y_3 \sim 2^{-k-\beta}}
	|f(y)|^{p_1} dy  \\ &
 \quad \times \log\bigg[ 2+ 2^{-2k} 2^{-k-\nu} 2^{\varepsilon (|\nu|+|\beta|)p_1\slash 2}
	\lambda^{p_1} \bigg( \int_{y_3 \sim 2^{-k-\beta}} |f(y)|^{p_1} dy \bigg)^{-1} \bigg];
\end{align*}
notice that the logarithm here is at most a constant times
$$
\Phi_{k,\nu,\beta}(\lambda) = |\nu| + |\beta| + \log\bigg[ 2+ 2^{-3k} 
	\lambda^{p_1} \bigg( \int_{y_3 \sim 2^{-k-\beta}} |f(y)|^{p_1} dy \bigg)^{-1} \bigg].
$$
Since the above level sets are disjoint for different $k$, it follows that
\begin{align}
& \Big|\Big\{x \in \R^3_+ : \sum_{k\ge 0} H_{k,\nu,\beta}f(x) > c 
	\lambda 2^{-\varepsilon (|\nu|+|\beta|)\slash 2} \Big\}\Big|  = 
	\sum_{k\ge 0} \big|\{H_{k,\nu,\beta}f(x) > c 
	\lambda 2^{-\varepsilon (|\nu|+|\beta|)\slash 2} \}\big| \nonumber \\
& \lesssim 2^{-\varepsilon (|\nu|+|\beta|)p_1\slash 2} 
	\sum_{k\ge 0} \frac{1}{\lambda^{p_1}}
	\int_{y_3 \sim 2^{-k-\beta}} |f(y)|^{p_1} dy \; \Phi_{k,\nu,\beta}(\lambda). \label{cre}
\end{align}

To estimate the right-hand side here, we start by observing that 
$$
 2^{-\varepsilon (|\nu|+|\beta|)p_1\slash 2} \sum_{k\ge 0}
	\frac{1}{\lambda^{p_1}}
	\int_{y_3 \sim 2^{-k-\beta}} |f(y)|^{p_1} dy  \; \big(|\nu|+|\beta|\big) \lesssim 
	 2^{-\varepsilon (|\nu|+|\beta|) \slash 2} \frac{\|f\|_{p_1}^{p_1}}{\lambda^{p_1}}.
$$
The remaining part of the right-hand side in \eqref{cre} is
$$
2^{-\varepsilon (|\nu|+|\beta|)p_1\slash 2} \sum_{k\ge 0} \frac{1}{\lambda^{p_1}}
	\int_{y_3 \sim 2^{-k-\beta}} |f(y)|^{p_1} dy \,
	\log\bigg[ 2+ 2^{-3k} 
	\lambda^{p_1} \bigg( \int_{y_3 \sim 2^{-k-\beta}} |f(y)|^{p_1} dy \bigg)^{-1} \bigg].
$$
To estimate the sum here, we consider two cases. If
\begin{equation} \label{crst}
\int_{y_3 \sim 2^{-k-\beta}} |f(y)|^{p_1} dy \ge 2^{-3k} \|f\|_{p_1}^{p_1},
\end{equation}
the argument of the last logarithm is at most $2 + (\lambda \slash \|f\|_{p_1})^{p_1}$.
So summing the terms with this property in the above sum, we get at most
$$
\sum_{k\ge 0} \frac{1}{\lambda^{p_1}} \int_{y_3 \sim 2^{-k-\beta}} |f(y)|^{p_1} dy \,
	\log\bigg( 2 + \frac{\lambda^{p_1}}{\|f\|_{p_1}^{p_1}} \bigg) 
		\le \frac{\|f\|_{p_1}^{p_1}}{\lambda^{p_1}} 
			\log\bigg( 2 + \frac{\lambda^{p_1}}{\|f\|_{p_1}^{p_1}} \bigg).
$$
For the terms not satisfying \eqref{crst}, we use the essential
monotonicity of the function $s \log(2+s^{-1})$ to estimate that part of the sum by
$$
\frac{C}{\lambda^{p_1}} \sum_{k\ge 0} 2^{-3k}
	\|f\|_{p_1}^{p_1}
	\log\bigg( 2 + \frac{\lambda^{p_1}}{\|f\|_{p_1}^{p_1}} \bigg) \simeq
		\frac{\|f\|_{p_1}^{p_1}}{\lambda^{p_1}} 
			\log\bigg( 2 + \frac{\lambda^{p_1}}{\|f\|_{p_1}^{p_1}} \bigg).
$$

Altogether, this gives
$$
\Big|\Big\{x \in \R^3_+ : \sum_{k\ge 0} H_{k,\nu,\beta}f(x) > c 
	\lambda 2^{-\varepsilon (|\nu|+|\beta|)\slash 2} \Big\}\Big| \lesssim
		2^{-\varepsilon (|\nu|+|\beta|)\slash 2} \frac{\|f\|_{p_1}^{p_1}}{\lambda^{p_1}} 
			\log\bigg( 2 + \frac{\lambda}{\|f\|_{p_1}} \bigg).
$$
Now the exponentially decreasing factor allows us to sum these estimates in $\nu$ and $\beta$,
and we finally conclude that
\begin{align*}
|\{x \in \R^3_+ : H_{*}f(x) > \lambda\}| & \le \sum_{\nu,\beta \in \Z}
\Big|\Big\{x \in \R^3_+ : \sum_{k\ge 0} H_{k,\nu,\beta}f(x) > c 
	\lambda 2^{-\varepsilon (|\nu|+|\beta|)\slash 2} \Big\}\Big| \\
& \lesssim \frac{\|f\|_{p_1}^{p_1}}{\lambda^{p_1}} 
			\log\bigg( 2 + \frac{\lambda}{\|f\|_{p_1}} \bigg).
\end{align*}
This finishes the proof of Lemma \ref{lpos_s}.
\end{proof}

\subsection{Counterexamples} \label{cex_p1} \qquad \\
Assume now that $d\ge 4$ and $\alpha$ is such that $\widetilde{\alpha}<0$ and 
$\widetilde{d}(\alpha)\ge 2$.
We shall construct functions proving the negative part of Theorem
\ref{main_thm} (b2). Here we may replace $T_*^{\alpha}$ by $\M$, since only 
$t \le 1$ will be considered. For the sake of clarity, we state the result separately.

\begin{thm} \label{neg}
For $d \geq 4$ and $\alpha$ as above, there exists a function 
$f \in L^{p_1,1}$ such that
$$
\big|\{\M f > \lambda\}\big| = \infty, \qquad  \lambda > 0.
$$
\end{thm}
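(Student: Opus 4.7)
The plan is to construct $f\in L^{p_1,1}(\R^d_+)$ as a superposition of disjoint ``bumps'' indexed by a scale parameter $l\geq 1$, exploiting the failure of the integral $\int_{\R^{d''}_+}(1+\sum''x_j)^{-d'}\,dx''$ to converge when $d'\leq d''$---the precise obstruction that the proof of Lemma \ref{two} relies on. Without loss of generality I assume $\alpha_1=\alpha_2=a=\widetilde{\alpha}$ and $\alpha_i>a$ for $i\geq 3$, so $d'=2$; the hypothesis $d\geq 4$ gives $d''=d-2\geq 2=d'$, which is the failure regime. The general cases $\widetilde{d}(\alpha)>2$ or $d>4$ reduce to this one by selecting only two minimal coordinates and tensoring the construction with fixed positive bumps in the remaining coordinates.

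Concretely, let $B_l=(2^l,2^{l+1})^{d-2}$ for $l\geq 1$, a sequence of pairwise disjoint boxes in $\R_+^{d-2}$ with $|B_l|\simeq 2^{l(d-2)}$, and set
$$
f(y)\;=\;\sum_{l\geq 1}c_l\,g(y_1,y_2)\,\chi_{B_l}(y_3,\ldots,y_d),
$$
where $g(y_1,y_2)=(y_1y_2)^{-1/p_1}\log^{-\beta}(e+1/(y_1y_2))\,\chi_{[0,1/2]^2}$ with $\beta>1+1/p_1$ (so $g\in L^{p_1,1}(\R_+^2)$), and the weights $c_l>0$ are to be tuned. The core pointwise bound comes from choosing the dilation $t_l\simeq 2^{-l}$: by Lemma \ref{lower}(a) each non-minimal kernel $\mathcal{H}_{t_l}^{\alpha_i}(x_i,y_i)$ is bounded below by a positive constant on a $y_i$-set of measure $\simeq 1$ when $x_i\simeq y_i\simeq 2^l$, while Lemma \ref{lower}(b) gives the singular lower bound $\mathcal{H}_{t_l}^{a}(x_i,y_i)\gtrsim (x_iy_i)^{a/2}/t_l^{a+1}$ in the minimal directions, valid on $x_i,y_i<2t_l$. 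Carrying out the $y$-integration over the subregion where these lower bounds apply yields an estimate of the form
$$
\M f_l(x)\;\gtrsim\;c_l\,\Psi_l\,(x_1x_2)^{-1/p_1}, \qquad x\in(0,2t_l)^2\times \widetilde{B}_l,
$$
where $\widetilde{B}_l\subset B_l$ has measure $\simeq |B_l|$ and $\Psi_l$ is an explicit factor that is at worst polylogarithmic in $l$.

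The set $\{\M f>\lambda\}$ then contains, for every $l$, a subset of $(0,2t_l)^2\times \widetilde{B}_l$ whose $(x_1,x_2)$-slice has area of order $(c_l\Psi_l/\lambda)^{p_1}\log(\cdots)$; summing over $l$ using the pairwise disjointness of the $B_l$'s produces the desired infinite total measure, provided the $c_l$ are chosen appropriately. The main obstacle is the simultaneous satisfaction of (i) $\|f\|_{p_1,1}\leq \|g\|_{p_1,1}\sum_l c_l |B_l|^{1/p_1}<\infty$ and (ii) the series $\lambda^{-p_1}\sum_l c_l^{p_1}|B_l|\log(\cdots)$ diverging for every $\lambda>0$; this delicate balancing, which crucially combines the unboundedness of the singularity $(x_1x_2)^{-1/p_1}$ near the origin with the geometric growth $|B_l|\simeq 2^{l(d-2)}$, becomes feasible precisely when $d\geq 4$ and $\widetilde{d}(\alpha)\geq 2$. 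Finally, since only values $t\leq 1$ are used throughout the construction, the same counterexample works for $T_*^\alpha$ in place of $\M$.
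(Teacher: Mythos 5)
Your general framework is on the right track and borrows the correct tools from the paper (reduction to two minimal coordinates, dyadic bumps in the $y''$-variables at scales $2^l$ with matching time $t_l\simeq 2^{-l}$, the lower kernel estimates of Lemma~\ref{lower} in the two regimes, and Lemma~\ref{level} for the level-set size). However, the construction as written does \emph{not} yield an infinite level set, and the claim that the balancing of (i) and (ii) ``becomes feasible when $d\geq 4$'' is false for this family of functions.

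The problem lies in the choice of a \emph{fixed} profile $g\in L^{p_1,1}(\R_+^2)$ in the minimal coordinates. For $g\in L^{p_1,1}$ the logarithmic correction is forced with exponent $\beta>1+1/p_1$, and when you compute $\Psi_l$ from the $y'$-integration against Lemma~\ref{lower}(b) you get
$$
\Psi_l \simeq t_l^{-2(a+1)}\int_{(0,2t_l)^2}(y_1y_2)^{a}\log^{-\beta}\bigl(e+(y_1y_2)^{-1}\bigr)\,dy_1dy_2
\;\simeq\; \bigl(\log(1/t_l)\bigr)^{-\beta}\simeq l^{-\beta},
$$
which is a genuine power of $l$, not a mild (``polylogarithmic'') correction. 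Writing $\rho_l=c_l|B_l|^{1/p_1}$ so that (i) reads $\sum_l\rho_l<\infty$, the level set contribution of the $l$-th bump is
$\simeq \rho_l^{p_1}\,l^{-\beta p_1}L_l(\lambda)$ with $L_l(\lambda)\lesssim l+\log(1/\rho_l)+O(1)$. One then checks that
$\sum_l\rho_l^{p_1}l^{1-\beta p_1}<\infty$ because $1-\beta p_1<-p_1<-2$, and
$\sum_l\rho_l^{p_1}l^{-\beta p_1}\log(1/\rho_l)<\infty$ because $x^{p_1}\log(1/x)$ is bounded on $(0,1]$.
So $|\{\M f>\lambda\}|<\infty$ for every $\lambda>0$: the construction fails for \emph{all} $d$, not just $d\leq 3$. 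The underlying reason is that none of your normalized bumps $g\otimes\chi_{B_l}/|B_l|^{1/p_1}$ has a large level set --- its measure at level $1$ is $\simeq l^{-\beta p_1}L_l\to 0$ --- whereas the counterexample must have the property that some single bump already achieves an arbitrarily large level set at a \emph{fixed} height.

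The paper's construction fixes exactly this by making the profile in the minimal variables depend on the scale: $f_t\propto\chi_{E_t}$ with $E_t=(t,2t)^{d'}\times(t^{-1},2t^{-1})^{d''}$, so the $y'$-bump is a box $(t,2t)^{d'}$ concentrated at the active scale and $\Psi_l\simeq 1$ with no loss. Normalized in $L^{p_1,1}$, such an $f_t$ has $|\{\M f_t>\lambda\}|\gtrsim\lambda^{-p_1}\bigl[\log(t^{d'-d''}\lambda^{p_1})\bigr]^{d'-1}$, which is unbounded as $t\to 0$ when $d'<d''$; the infinite level set then follows from a superposition $\sum_j\epsilon_jf_{t_j}$ with $t_j$ chosen small enough relative to $\epsilon_j$. (Note that the scale parameter $m_j=\log_2(1/t_j)$ is an independent free parameter; in your construction it is locked to the bump index $l$.) Moreover, when $d=4$ one has $d'=d''$ and even this does not work, so the paper switches to the set $E_R$ that superposes a continuum of scales; your proposal does not address this case at all. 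Replacing your fixed $g$ by the scale-adapted indicator $t_l^{-d'/p_1}\chi_{(t_l,2t_l)^{d'}}$ would recover the paper's proof, but then there is no longer an independent contribution from the student's construction.
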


We will prove the theorem in the case when all $\alpha_i$ are minimal.
The same reasoning works in the general case, as seen by including the variables 
corresponding to non-minimal $\alpha_i$ among the double-primed variables below.

\begin{proof}[Proof of Theorem \ref{neg}]
We continue to use the splitting $\R_+^d = \R_+^{d'} \times \R_+^{d''}$ and the related 
notation. Assuming to begin with that $d \geq 5$, we can choose $d'$ so that $2 \leq d' < d''$.
We shall then construct an $L^{p_1,1}$ function as in the statement of Theorem \ref{neg}. 
The same function will actually show that the corresponding operator $T_{*}^{D'}$ cannot be
controlled on $L^{p_1,1}$.

Let for small $t>0$ the set $E_t \subset \R^d_+$ be defined by $t < y_j < 2t$ for $j \leq d'$ 
and $t^{-1}<y_j <2t^{-1}$ for $d' <j \leq d$. Let $f_t = t^{(d''-d')\slash p_1} \chi_{E_t}$, 
which has $L^{p_1,1}$ norm essentially $1$. Clearly,
$$
\M f_t(x) \geq t^{(d''-d')\slash p_1} \int_{E_t}\mathcal{H}_t^{\alpha}(x,y)\,dy,
$$
and here we take points $x$ with $x_j < t$ for $j \leq d'$ and 
$t^{-1}< x_j <2 t^{-1}$ for $d' <j \leq d$. With such an $x$, we further restrict the 
integration above by the condition $|y_j -x_j| < 1$ for $d' <j \leq d$. For such $x$ and $y$, 
Lemma \ref{lower}, part (b) for the first $d'$ variables and part (a) for the remaining ones, 
implies that the kernel $\mathcal{H}_t^{\alpha}(x,y)$ 
is at least $c t^{d'\slash p_1-d'} {\prod}' x_j^{-1\slash p_1}$. We get
$$
\M f_t(x) \gtrsim t^{d''\slash p_1} {\prod}' x_j^{-1\slash p_1}.
$$

We now apply Lemma \ref{level} (b) in dimension $d'$, with 
$\nu = t^{-d''}\lambda^{p_1}$ for some $\lambda>0$ and $t$ so small that 
$t^{d'-d''} \lambda^{p_1} \ge 1$.
For each fixed $x''$, we conclude that the $d'$-dimensional measure
of the level set $\{x' : \M f_t(x',x'') > \lambda\}$ is at least
$ct^{d''}\lambda^{-p_1} [\log(t^{d'-d''}\lambda^{p_1})]^{d'-1}$. Integrating
with respect to $x''$ in the set where $t^{-1}<x_j < 2t^{-1}$ for $j>d'$, 
we see that the $d$-dimensional measure of $\{\M f_t > \lambda\}$ is at least
$c\lambda^{-p_1} [\log (t^{d'-d''}\lambda^{p_1})]^{d'-1}$.

But we can make the last quantity arbitrarily large by taking
$t$ small, for any fixed $\lambda>0$. This shows that the condition $f\in L^{p_1,1}$
gives no control of the level set. By taking linear combinations
of such $f_t$, it is easy to construct an $L^{p_1,1}$ function such that
all the level sets of $\M f$ have infinite measure.

To cover also the case $d=4$, we now consider $d'$ with $2 \le d' = d''=d\slash 2$.
Here the construction is a bit more subtle. We shall take essentially
the characteristic function of a union of sets like $E_t$ with $1\slash R < t < 1\slash 4$, 
for large values of $R$. More precisely, for $R>6$ we define the set
\begin{align*}
E_R = \big\{y \in \mathbb{R}_+^d : \;& 1 < y_d < R, \;\; y_d^{-1}/4 < y_j < 2y_d^{-1} 
\;\; \mathrm{for} \;\; 1 \leq j \leq d' \\ 
& \mathrm{and}\;\; y_d/8 < y_j < 8y_d \;\; \mathrm{for} \;\; d' < j < d\big\}.
\end{align*}
Then
$$
|E_R| \simeq \int_1^R y_d^{-d'+d''-1}\,dy_d = \log R,
$$
and we define the function $f_R = |E_R|^{-1\slash p_1} \chi_{E_R}$, whose norm in 
$L^{p_1,1}$ is essentially $1$. We shall estimate $\M f_R(x)$ at points $x$ with 
$4 < x_d < R-1$ and $0 < x_j < x_d^{-1}$ 
for  $1 \leq j \leq d'$ and $x_d/2 < x_j < 2x_d$ for $d' < j < d$. Then
$$
\M f_R(x) \gtrsim (\log R)^{-1\slash p_1} \int_{E_R}\mathcal{H}_t^{\alpha}(x, y)\,dy, 
$$ 
where we choose $t = x_d^{-1}$. Further, we restrict this integral to the set
\begin{align*}
F_x = \big\{ y \in \R^d_+ : 
	x_d^{-1}/2 < y_j < x_d^{-1}\;\; \mathrm{for} \;\; 1 \leq j \leq d' \;\; 
 \mathrm{and}  \;\;   |y_j - x_j| < 1 \;\; \mathrm{for} \;\; d' < j \leq d \big\};
\end{align*}
some simple computations show that $F_x \subset E_R$, if $x$ is as described above. 
For $y \in F_x$, items (a) and (b) of Lemma \ref{lower} then imply
$$
\mathcal{H}_{x_d^{-1}}^{\alpha}(x, y) \gtrsim x_d^{d'-d'\slash p_1} \, {\prod}' x_j^{-1\slash p_1}.
$$
Integrating in $y$ over $F_x$, we conclude that
\begin{equation} \label{eq:11}
	\M f_R(x) \gtrsim (\log R)^{-1\slash p_1}\, x_d^{-d'\slash p_1} \, {\prod}' x_j^{-1\slash p_1}.
\end{equation}
Now fix a point $x'' \in \R^{d''}_{+}$ with $4 < x_d <  R-1$ and
$x_d/2 < x_j < 2x_d$ for $d' < j <d$. Then if $x' \in (0, x_d^{-1})^{d'}$ satisfies 
${\prod}' x_j^{-1\slash p_1} > (\log R)^{1\slash p_1} x_d^{d'\slash p_1}\lambda$ for 
some $\lambda > 0$, \eqref{eq:11} implies that $\M f_R(x', x'') \gtrsim \lambda$. 
In view of Lemma \ref{level} (b), under the assumption $\lambda > (\log R)^{-1\slash p_1}$
the set of such $x'$ has $d'$-dimensional measure at least 
$$
c (\log R)^{-1} x_d^{-d'} \lambda^{-p_1} \big[\log\big(2 + x_d^{-d'}(\log R)
x_d^{d'}\lambda^{p_1}\big)\big]^{d'-1}.
$$
Now we integrate in $x''$, over the set specified above. We conclude that
the $d$-dimensional measure of the set where $\M f_R(x) > c\lambda$ is at least
$$
c (\log R)^{-1} \lambda^{-p_1} \big[\log(2 +\lambda^{p_1}\log R)\big]^{d'-1} \int_4
^{R-1} x_d^{-1}\,dx_d
\simeq \lambda^{-p_1} \big[\log(2 +\lambda^{p_1} \log R)\big]^{d'-1}.
$$
Since $f_R$ is normalized in $L^{p_1,1}$ and $R$ can be chosen arbitrarily
large, we get the same conclusions as in the case $2 \leq d' <d''$.
\end{proof}

\subsection{Comment on sharpness} \label{sharp_p1}

In Theorem \ref{main_thm} (a2) and (b2) the weak-type space
$L^{p_1,\infty}\log^{-(\widetilde{d}(\alpha)-1)\slash p_1} L$
is sharp in the following sense. There exists a function $f$, not only in $L^{p_1}$ 
but bounded and of compact support, such that for large $\lambda$,
$$
|\{ T^{\alpha}_* f > \lambda \}| \simeq 
	\lambda^{-p_1} \big[ \log(2+\lambda) \big]^{\widetilde{d}(\alpha)-1}.
$$
This $f$ can simply be chosen as the characteristic function of the cube $(1\slash 2,1)^d$.

Indeed, in the case $\widetilde{d}(\alpha)=d$, that is when all $\alpha_i$ are minimal,
Lemma \ref{lower} (b) implies
$$
T^{\alpha}_1 f(x) \simeq \int \mathcal{H}^{\alpha}_{1\slash 2}(x,y) f(y)\, dy
\gtrsim \prod_{j=1}^d x_j^{-1\slash p_1}, \qquad x \in (0,1)^d.
$$
Since $T^{\alpha}_*f \ge T^{\alpha}_1 f$, we see from Lemma \ref{level} (b) that
the level sets of $T^{\alpha}_*f$ are as claimed. In the general case we again use 
Lemma \ref{lower} (b) to estimate $T^{\alpha}_1 f(x)$ from below by a suitable product. 
Then an application of Lemma \ref{level} (b) in the variables corresponding to the minimal
$\alpha_i$ and integration in the remaining variables  lead to the conclusion.

This observation shows, in particular, that $T^{\alpha}_*$
is not of strong type $(p_1,p_1)$, even if there is only one minimal $\alpha_i$.

%%%%%%%%%%%%%%%%%%%%%%%%%%%%%%%%%%%%%%%%%%%%%%%%%%%%%%%%%%%%%%%%%%%%%%%%%%%%%%%%%%%%%%%%%%%%%%%%%%%%
\section{The endpoint $p_0$} \label{sec:p0}
%%%%%%%%%%%%%%%%%%%%%%%%%%%%%%%%%%%%%%%%%%%%%%%%%%%%%%%%%%%%%%%%%%%%%%%%%%%%%%%%%%%%%%%%%%%%%%%%%%%%

We keep the notation introduced in the previous sections. The operator under consideration 
is still $\M$ rather than $T_*^{\alpha}$. We first deal with the situation when there is only one 
minimal value $\alpha_i$. In this case the restricted weak type $(p_0,p_0)$ follows quickly 
from the results of Section \ref{sec:strong}. Indeed, assume that $d \ge 2$ and $\alpha_1$ 
is the only minimal $\alpha_i$. Then the maximal operator
$$
\mathcal{H}_*^{(\alpha_2,\ldots,\alpha_d)} f(x) = \sup_{t>0} \int
	\mathcal{H}_t^{(\alpha_2,\ldots,\alpha_d)}
		\big( (x_2,\ldots,x_d),(y_2,\ldots,y_d)\big) |f(x_1,y_2,\ldots,y_d)| \, 
			dy_2\cdots dy_d
$$
is bounded on $L^p(\R^d_+)$ for $p$ in an interval strictly containing the point
$p_0 = p_0(\widetilde{\alpha})=p_0(\alpha_1)$. By interpolation, see for instance 
\cite[Theorem 4.13]{BeSh}, it is then also bounded on the Lorentz space $L^{p_0,1}(\R^d_+)$.
Moreover, the one-dimensional maximal operator $\mathcal{H}_*^{\alpha_1}$ satisfies
the restricted weak-type $(p_0,p_0)$ estimate (this was already proved in Section 
\ref{sec:strong}) and the same is true for its $d$-dimensional extension
$$
\mathcal{H}_*^{\alpha_1} f(x) = \sup_{t>0} \int \mathcal{H}_*^{\alpha_1}(x_1,y_1)
	|f(y_1,x_2,\ldots,x_d)| \, dy_1,
$$
as easily verified. Since restricted weak type $(p_0,p_0)$ means boundedness from 
$L^{p_0,1}$ to weak $L^{p_0}$ and
$$
\M f(x) \le \mathcal{H}_*^{\alpha_1} \circ
	\mathcal{H}_*^{(\alpha_2,\ldots,\alpha_d)} f(x), \qquad x \in \R^d_+,
$$
item (a3) in Theorem \ref{main_thm} follows.

Proving the remaining results is less straightforward. As in Section 
\ref{sec:p1}, we consider two main cases: when all $\alpha_i$ are minimal and when
there are precisely two minimal $\alpha_i$ in dimension $3$. These two cases will
justify the estimate of Theorem \ref{main_thm} (b3). Later we will construct
counterexamples disproving similar estimates in dimensions $d\ge 4$.

\subsection{The case when all $\alpha_i$ are minimal} \label{ssec:allmin} \qquad \\ 
We work in dimension $d \ge 2$. All the $\alpha_i$ are assumed to be $a$, with $-1<a<0$.
The critical exponents are $p_1=-2\slash a$ and $p_0 = p'_1 = 2\slash (a+2)$.

\begin{thm} \label{pos_p0}
For $d=2,3$ and $\alpha$ as above, the operator $\M$ maps $L^{p_0,1}\log^{(d-1)\slash p_1}L$ 
into $L^{p_0,\infty}$, in the sense that for all $E \subset \R^d_+$ of finite measure 
$$
\big|\{\M \chi_E > \lambda \}\big| \le C \frac{|E|}{\lambda^{p_0}}
								\bigg[ \log\bigg(2+ \frac{1}{|E|}\bigg) 
									\bigg]^{\frac{p_0}{p_1}({d}-1)}, \qquad \lambda > 0.
$$
\end{thm}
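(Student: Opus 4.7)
The approach parallels the analysis of the $p_1$ endpoint in Section~\ref{ssec:amp1}, with the second inequality of Proposition~\ref{estimate} (involving the Lorentz norm $\|\cdot\|_{p_0,1}$) playing the role of the first inequality there. Taking $f=\chi_E$ and applying the second inequality of Proposition~\ref{estimate} one variable at a time to $\int \mathcal{H}_t^{\alpha}(x,y)\chi_E(y)\,dy$, I obtain, for each subset $D' \subset \{1,\ldots,d\}$, a term of the form
$$
V_t^{D'}\chi_E(x) = \exp\Bigl(-\tfrac{c}{t}{\sum}' x_j - c t\, {\sum}'' x_j\Bigr) \,{\prod}' x_j^{-1/p_0}\, A_E^{D'}(x''),
$$
where $A_E^{D'}(x'')$ denotes the iterated $L^{p_0,1}$-norm in the $y'$ variables of $M''\chi_E(\cdot,x'')$, with $M''=\prod_{j\in D''}M_j$ acting on the $y''$ variables. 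The resulting bound $\int \mathcal{H}_t^{\alpha}(x,y)\chi_E(y)\,dy \lesssim \sum_{D'} V_t^{D'}\chi_E(x)$ reduces the problem to analyzing each maximal operator $V_*^{D'}\chi_E(x):=\sup_{0<t\le 1}V_t^{D'}\chi_E(x)$ separately.

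The extreme cases would be treated as follows. For $D'=\emptyset$ one has $V_*^{\emptyset}\chi_E\lesssim M''\chi_E$, which is bounded on $L^{p_0}(\R_+^d)$ since $p_0>1$; this already supplies the desired restricted weak type bound without any logarithmic loss. For $D'=\{1,\ldots,d\}$ I would use the arithmetic--geometric mean inequality (as in the proof of Lemma~\ref{two}) to eliminate $t$, arriving at
$$
V_*^{\{1,\ldots,d\}}\chi_E(x)\lesssim \prod_{j=1}^d x_j^{-1/p_0}\,\exp\Bigl({-c\sqrt{\textstyle\sum_{j=1}^d x_j}}\Bigr)\,|E|^{1/p_0},
$$
and then estimate the level set by applying Lemma~\ref{level}(c) with $\gamma=1/2$ to the $p_0$-th power. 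For the intermediate $D'$ with $1\le d'\le d-1$, I would adapt the proofs of Lemmas~\ref{one} and~\ref{two}: the first handles $d'=1$, while an analog of the second handles $d'>d''$ by fixing $x''$, applying Lemma~\ref{level}(c) in the $x'$ variables, and then integrating over $x''$. Since $d\le 3$, only a small number of subsets $D'$ needs to be inspected.

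The main obstacle is twofold. The first difficulty is to bound the iterated Lorentz norm $A_E^{D'}$ for a characteristic function: this is more delicate than its classical $L^{p_0,1}$-norm counterpart, and controlling it in terms of $|E|^{1/p_0}$ (possibly with a mild logarithmic correction) is what drives the entire argument. The second, more essential, difficulty is to match the prescribed logarithmic exponent $p_0(d-1)/p_1$, which is strictly smaller than the $d-1$ that emerges naively from Lemma~\ref{level}(c); achieving the correct exponent will require careful bookkeeping across the cases. An alternative route would be to deduce Theorem~\ref{pos_p0} from Theorem~\ref{pos} by a duality argument exploiting the symmetry of $\mathcal{H}_t^{\alpha}$ in $(x,y)$ together with a suitable measurable linearization of $\M$; this would yield the correct exponent automatically, at the price of handling the sublinearity of $\M$ with care.
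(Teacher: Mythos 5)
Your proposal parallels the paper's treatment of the $p_1$ endpoint (iterating Proposition~\ref{estimate} one variable at a time), but the paper's proof of Theorem~\ref{pos_p0} does \emph{not} do this, and for a structural reason that your own ``obstacles'' paragraph nearly uncovers. Applying the second inequality of Proposition~\ref{estimate} iteratively produces the factor $\prod' x_j^{-1/p_0}$ in $x'$. This function is \emph{not} in $L^{p_0,\infty}$: by Lemma~\ref{level}(a), its level sets on $(0,1)^{d'}$ carry an unavoidable $[\log]^{d'-1}$ factor. When you then try to bound $|\{V_*^{D'}\chi_E>\lambda\}|$ by Lemma~\ref{level}(c) applied to the $p_0$-th power, the resulting logarithm is $[\log(2+\lambda^{p_0}/A_E^{D'}(x'')^{p_0})]^{d'-1}$; it depends on $\lambda$, has exponent $d'-1$ rather than $(d'-1)p_0/p_1$, and grows as $\lambda\to\infty$. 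This is a genuine gap, not just a matter of ``careful bookkeeping.''

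The paper avoids this by working directly with the raw kernel bound \eqref{main} rather than the iterated Proposition~\ref{estimate}. The second term of \eqref{main} supplies $x_j^{a/2}=x_j^{-1/p_1}$ (not $x_j^{-1/p_0}$) in each primed coordinate, together with a $t^{-(a+1)}$ factor; the paper converts $\exp(-\tfrac{c}{t}\sum'x_j)$ into $(\tfrac{1}{t}\sum'x_j)^{(2/p_1-1)d'}$, and the powers of $t$ cancel, leaving the homogeneous function $\psi_{d'}(x')=(\sum'x_j)^{(2/p_1-1)d'}\prod'x_j^{-1/p_1}$. Unlike $\prod'x_j^{-1/p_0}$, this $\psi_{d'}$ lies in $L^{p_0,\infty}$ with \emph{no} logarithmic loss (Lemma~\ref{psi}), because the $(\sum'x_j)$-correction tames the blow-up along the coordinate hyperplanes. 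The $y'$ integral is kept intact, with $\prod'y_j^{-1/p_1}$, and the entire logarithmic loss is extracted there via the rearrangement Lemma~\ref{drear}, giving $|E|^{1/p_0}[\log(2+1/|E|)]^{(d'-1)/p_1}$ independently of $\lambda$. Composing the clean $L^{p_0,\infty}$-bound in $x'$ with this $y'$-estimate raises the exponent by $p_0$ and yields precisely $(d'-1)p_0/p_1$. You would also need the decomposition $M''\chi_E\le\sum_k 2^{-k}\chi_{E_k}$ with $E_k=\{M''\chi_E>2^{-k}\}$ and the weak $(1+\varepsilon)$ bound on $M''$ to control $|E_k|$ in the intermediate cases $1\le d'<d$, since $M''\chi_E$ is no longer a characteristic function and the iterated Lorentz norm $A_E^{D'}$ cannot be computed directly. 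In short: your decomposition by $D'$ and your use of Lemma~\ref{level}(c) with $\gamma=1/2$ are on the right track, but you must separate the roles of $x'$ (handled by a \emph{clean} weak-$L^{p_0}$ bound for $\psi_{d'}$) and $y'$ (where the logarithm is generated, via Lemma~\ref{drear}) rather than merging the $y'$ integral into a Lorentz norm and then hitting $x'$ with the level-set lemma.

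Your proposed duality route is an interesting alternative but is also not developed: a rigorous dualization of a sublinear maximal operator between $L^{p_1}\to L^{p_1,\infty}\log^{-N/p_1}L$ and $L^{p_0,1}\log^{N/p_1}L\to L^{p_0,\infty}$ would require choosing a measurable linearization, identifying the (pre)duals of the relevant Orlicz/Lorentz--Zygmund spaces, and verifying that nothing is lost. The paper does not pursue this.
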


We shall prove this theorem by applying the bound \eqref{main} to estimate
$\int \mathcal{H}_t^{\alpha}(x,y) |f(y)|\, dy$. As in the proof of Theorem \ref{pos}, this
will produce $2^d$ terms, indexed again by subsets $D'$ of $\{1,\ldots,d\}$.
We use all the notation from that proof, letting now the $x'$ variables correspond to
the second term in \eqref{main}. 

Thus the kernel is controlled by the sum in $D'$ of
\begin{align*}
& t^{(2\slash p_1 -1)d'} {\prod}' x_j^{-1\slash p_1} {\prod}' y_j^{-1\slash p_1}
\exp\Big( -\frac{c}{t}\Big( {\sum}' x_j + {\sum}' y_j \Big) \Big) \\
& \times \exp\Big( -{c}{t}\Big( {\sum}'' x_j + {\sum}'' y_j \Big) \Big)
{\prod}'' \frac{1}{\sqrt{t x_j}} \exp\Big( -c \frac{(y_j-x_j)^2}{t x_j} \Big).
\end{align*}
Observe that
$$
\exp\Big( -\frac{c}{t} {\sum}' x_j \Big) \lesssim 
	\Big( \frac{1}{t} {\sum}' x_j \Big)^{(2\slash p_1 -1)d'}
$$
and
$$
\int {\prod}'' \frac{1}{\sqrt{t x_j}} \exp\Big( -c \frac{(y_j-x_j)^2}{t x_j} \Big)
	|f(y',y'')|\, dy''  \lesssim M'' f(y',x'').
$$
As in the proof of Lemma \ref{two}, we use
the inequality between arithmetic and geometric means to conclude that
$$
\int \mathcal{H}_t^{\alpha}(x,y) |f(y)|\, dy \lesssim \sum_{D'} S^{D'}f(x), 
$$
where
\begin{align*}
 S^{D'}f(x) & =  {\prod}' x_j^{-1\slash p_1} \Big({\sum}' x_j\Big)^{(2\slash p_1 -1)d'} \\
&	\quad \times \int {\prod}' y_j^{-1\slash p_1} 
	\exp\bigg(-c\sqrt{ \Big(1+{\sum}'' x_j\Big)\,{\sum}' y_j}\;\bigg) M'' f(y',x'')\, dy'
\end{align*}
does not depend on $t$. So it suffices to obtain suitable estimates for each operator $S^{D'}$.

The case when $D' = \emptyset$ (i.e. $d'=0$) is simple, since then $S^{D'}$ is obviously
bounded on $L^{p_0}$. The remaining cases are treated in the lemmas below.

\begin{lem} \label{tres}
If $d'=d$ then for all $E \subset \R^d_+$ of finite measure
$$
\big|\big\{S^{D'}\chi_{E}> \lambda\big\}\big| 
\le C \frac{|E|}{\lambda^{p_0}}\bigg[ \log\Big( 2+\frac{1}{|E|}\Big)
	\bigg]^{\frac{p_0}{p_1}(d'-1)}, \qquad \lambda > 0.
$$
\end{lem}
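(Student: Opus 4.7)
The plan is to exploit the factorization that occurs when $d'=d$. Then $D''=\emptyset$, so $M''$ is the identity, the sum ${\sum}'' x_j$ is empty, and the formula for $S^{D'}f(x)$ given in the text separates into a pure multiplication in $x$ times a linear functional of $f$:
\[
S^{D'}f(x) = A(x)\, B(f),
\]
where
\[
A(x) = \prod_{j=1}^d x_j^{-1/p_1} \Big(\sum_{j=1}^d x_j\Big)^{-(1+a)d}, \qquad
B(f) = \int \prod_{j=1}^d y_j^{-1/p_1} \exp\Big(-c\sqrt{{\textstyle \sum_j} y_j}\,\Big) f(y)\, dy
\]
(we used $2/p_1 - 1 = -(1+a)$ since $a=-2/p_1$). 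As $B(\chi_E)$ is a number depending only on $E$, we have $\{S^{D'}\chi_E > \lambda\} = \{A > \lambda/B(\chi_E)\}$, and the task reduces to estimating the distribution function of $A$ and the size of $B(\chi_E)$ separately.

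To control $A$, I would pass to simplex-polar coordinates $x = r\theta$ with $r = \sum_j x_j$ and $\theta$ on the open unit simplex $\Delta \subset \R_+^d$, so that $dx = r^{d-1}\,dr\,d\sigma(\theta)$. A brief computation using $1/p_0 = 1 - 1/p_1$ and $1+a = 1 - 2/p_1$ shows $d/p_1 + (1+a)d = d/p_0$, so $A(r\theta) = r^{-d/p_0} F(\theta)$ with $F(\theta) = \prod_j \theta_j^{-1/p_1}$. The level set $\{A > \mu\}$ becomes $\{r < (F(\theta)/\mu)^{p_0/d}\}$, and integrating first in $r$ gives
\[
|\{A > \mu\}| = c_d\, \mu^{-p_0} \int_{\Delta} \prod_{j=1}^d \theta_j^{-p_0/p_1}\, d\sigma(\theta).
\]
This Dirichlet-type integral on $\Delta$ converges because $p_0/p_1 < 1$ (indeed $p_0/p_1 = -a/(a+2) \in (0,1)$), hence $|\{A > \mu\}| \lesssim \mu^{-p_0}$.

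For $B(\chi_E)$, the Hardy-Littlewood rearrangement inequality reduces matters to a pointwise bound on $g^*$, where $g(y) = \prod_j y_j^{-1/p_1} \exp(-c\sqrt{\sum_j y_j})$. Raising $\{g > \nu\}$ to the power $p_1$ rewrites it as the level set of $\prod_j y_j^{-1} \exp(-(\sigma \sum_j y_j)^{1/2})$ with $\sigma \simeq 1$, to which Lemma \ref{level}(c) applies with $\gamma = 1/2$, yielding $|\{g > \nu\}| \lesssim \nu^{-p_1}[\log(2+\nu)]^{d-1}$. Inverting gives
\[
g^*(s) \lesssim s^{-1/p_1}\,\bigl[\log(2+1/s)\bigr]^{(d-1)/p_1},
\]
and a routine integration (via the substitution $s = e^{-u}$ together with the asymptotics $\int_L^\infty u^{\alpha} e^{-u/p_0}\,du \simeq p_0 L^{\alpha} e^{-L/p_0}$) then produces
\[
B(\chi_E) \le \int_0^{|E|} g^*(s)\,ds \lesssim |E|^{1/p_0}\bigl[\log(2+1/|E|)\bigr]^{(d-1)/p_1}.
\]

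Substituting the two bounds into $|\{A > \lambda/B(\chi_E)\}| \lesssim \lambda^{-p_0} B(\chi_E)^{p_0}$ yields exactly the inequality of the lemma, with the logarithmic exponent $(p_0/p_1)(d'-1) = (p_0/p_1)(d-1)$. The only genuinely delicate step is the bound on $g^*$, where Lemma \ref{level}(c) does the essential work by producing precisely the right logarithmic exponent; the clean splitting $S^{D'} = A \cdot B(\cdot)$ — available only here because $d' = d$ — is what makes the rest of the argument straightforward.
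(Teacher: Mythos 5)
Your proof is correct and follows essentially the same route as the paper: factor $S^{D'}\chi_E(x) = \psi_d(x)\,B(\chi_E)$, bound $B(\chi_E)$ by $|E|^{1/p_0}[\log(2+1/|E|)]^{(d-1)/p_1}$ via Hardy--Littlewood and a rearrangement estimate from Lemma~\ref{level}(c), and finish using that $\psi_d \in L^{p_0,\infty}$. The only cosmetic difference is that you re-derive the paper's Lemmas~\ref{psi} and~\ref{drear} inline --- the polar-coordinate/Dirichlet-integral computation you give for $|\{\psi_d>\mu\}|\lesssim\mu^{-p_0}$ is a slightly cleaner alternative to the paper's dyadic sector decomposition, but the content is the same.
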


\begin{lem} \label{dos}
The estimate of Lemma \ref{tres} is true whenever $d'>d''\ge 1$.
\end{lem}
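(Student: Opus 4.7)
My plan is to follow the template set by Lemma~\ref{two} (which handled the analogous $D'$-case at the endpoint $p_1$), now at the dual endpoint $p_0$ for characteristic functions. The key idea is to use H\"older's inequality in the $y'$-integral so as to detach $M''\chi_E(\cdot,x'')$ from the singular kernel, which leaves a structure identical to the one dealt with in Lemma~\ref{two}. Then Lemma~\ref{level}(c) handles the $x'$-directions, and a separate H\"older argument in $x''$ finishes the job.

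First, I fix $x''$, write $\sigma=\sqrt{1+\sum''x_j}$, and apply H\"older with the conjugate exponents $p_0,p_1$ to the $y'$-integral in $S^{D'}\chi_E(x',x'')$. Rescaling $y_j\to y_j/\sigma^2$ for $j\in D'$ in the resulting integral yields
\[
S^{D'}\chi_E(x',x'') \lesssim \sigma^{-\kappa}\,{\prod}'\,x_j^{-1/p_1}\,\exp\!\bigl(-c\sqrt{\sigma^2\,{\sum}'\,x_j}\bigr)\,\|M''\chi_E(\cdot,x'')\|_{p_1,d'},
\]
with $\kappa=2d'(2-p_0)/p_0$. This has exactly the same shape as the starting estimate in the proof of Lemma~\ref{two}, with $\|f\|_{p_1}$ replaced by the slice quantity $\|M''\chi_E(\cdot,x'')\|_{p_1,d'}$. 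Next, I apply Lemma~\ref{level}(c) with $\gamma=1/2$ to the $x'$-level set, exactly as in the proof of Lemma~\ref{two}. Splitting the integration region according to whether the argument of the logarithm is of order $(1+\lambda)^{p_1}$ or not (analogous to the splitting in the proof of Lemma~\ref{lpos_s}) produces, for fixed $x''$,
\[
\bigl|\{x': S^{D'}\chi_E(x',x'')>\lambda\}\bigr| \lesssim \frac{\sigma^{-\kappa p_0}\,\|M''\chi_E(\cdot,x'')\|_{p_1,d'}^{p_0}}{\lambda^{p_0}}\,\bigl[\log(2+1/|E|)\bigr]^{p_0(d'-1)/p_1}.
\]

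Finally, I integrate over $x''\in\R_+^{d''}$ using H\"older's inequality with exponents $p_1/p_0$ and $p_1/(p_1-p_0)$:
\[
\int\sigma^{-\kappa p_0}\|M''\chi_E(\cdot,x'')\|_{p_1,d'}^{p_0}dx'' \le \|M''\chi_E\|_{L^{p_1}(\R_+^d)}^{p_0}\Bigl(\int\sigma^{-\kappa p_0 p_1/(p_1-p_0)}dx''\Bigr)^{(p_1-p_0)/p_1}.
\]
A short computation with $1/p_0+1/p_1=1$ gives $\kappa p_0 p_1/(p_1-p_0)=2d'$, so the last integral reduces to $\int_{\R_+^{d''}}(1+\sum''x_j)^{-d'}dx''$, which converges precisely under the hypothesis $d'>d''$---exactly matching the assumption of the lemma. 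Combining this with the strong-$(p_1,p_1)$ bound $\|M''\chi_E\|_{L^{p_1}(\R_+^d)}^{p_1}\lesssim|E|$, one assembles the claimed estimate. The main obstacle I anticipate is the logarithmic-factor step: one has to verify that the $\sigma$-, $\lambda$-, and slice-norm-dependent logarithm emerging from Lemma~\ref{level}(c) can, via the splitting argument, be replaced by the clean $\log(2+1/|E|)$ uniformly before the H\"older integration over $x''$---a verification which, while conceptually parallel to the end of Lemma~\ref{lpos_s}, requires care to keep the exponent $(d'-1)p_0/p_1$ unchanged.
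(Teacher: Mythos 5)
Your proposal takes a genuinely different route from the paper's, but it has a fatal gap: the initial H\"older step in $y'$ is too lossy at the $p_0$ endpoint, and the losses propagate all the way to the final assembly.

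First, a small point: after applying H\"older with exponents $p_0,p_1$ to the $y'$-integral in $S^{D'}\chi_E$, what one correctly obtains is $S^{D'}\chi_E(x)\lesssim\psi_{d'}(x')\,\sigma^{-\kappa}\,\|M''\chi_E(\cdot,x'')\|_{p_1,d'}$, where $\sigma^2=1+{\sum}''x_j$ and $\kappa=2d'(1/p_0-1/p_1)$. The exponential factor $\exp\bigl(-c\sqrt{\sigma^2\,{\sum}'x_j}\bigr)$ you display in the $x'$-variables should not be there: the exponential $\exp\bigl(-\tfrac{c}{t}{\sum}'x_j\bigr)$ was already consumed in producing the power $({\sum}'x_j)^{(2/p_1-1)d'}$ inside $\psi_{d'}$, so there is no exponential decay left in $x'$. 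Consequently Lemma~\ref{level}(c), which requires precisely such exponential decay, cannot be applied to the $x'$-level sets of $\psi_{d'}(x')$; one must use Lemma~\ref{psi} ($\psi_{d'}\in L^{p_0,\infty}$) instead. And even if Lemma~\ref{level}(c) did apply, it yields a bound of the form $\lambda^{-p_1}\|\cdot\|^{p_1}_{p_1,d'}[\log(\cdots)]^{d'-1}$; no splitting argument converts the exponent of $\lambda$ from $p_1$ to $p_0$, so your claimed intermediate display with $\lambda^{-p_0}\|\cdot\|^{p_0}_{p_1,d'}$ does not follow.

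The decisive problem, though, appears in the final assembly. Your H\"older step in $x''$ produces $\|M''\chi_E\|_{L^{p_1}(\R^d_+)}^{p_0}$, and $\|M''\chi_E\|_{p_1}^{p_0}\lesssim |E|^{p_0/p_1}$, not $|E|$. Since $p_0/p_1<1$, for small $|E|$ this is strictly weaker than the required bound $C|E|\lambda^{-p_0}[\log(2+1/|E|)]^{p_0(d'-1)/p_1}$, so the lemma is not proved. The source of the loss is the H\"older step in $y'$: it bounds the integral of the singular kernel over a set of $y'$-slice measure $s$ by $\sigma^{-\kappa}s^{1/p_1}$, whereas the sharp bound, which the paper obtains via the rearrangement estimate of Lemma~\ref{drear}, is $s^{1/p_0}\bigl[\log(2+\sigma^{-2d'}s^{-1})\bigr]^{(d'-1)/p_1}$ — strictly smaller for small $s$. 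The extra factor $s^{1/p_0-1/p_1}$ is exactly what is needed to turn $|E|^{p_0/p_1}$ into $|E|$ in the end.

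Because of this, the paper's proof is built quite differently: it keeps the $y'$-integral intact and estimates it by Lemma~\ref{drear}; since $M''\chi_E$ is not a characteristic function (so Lemma~\ref{drear} does not apply to it directly), the paper first expands $M''\chi_E\le\sum_{k\ge1}2^{-k}\chi_{E_k}$ with $E_k=\{M''\chi_E>2^{-k}\}$, controls $|E_k|$ via a weak-type $(1+\varepsilon,1+\varepsilon)$ bound for $M''$, and then combines Lemma~\ref{psi} with the resulting pointwise bound on the slice operators $U_k^{D'}$. The condition $d'>d''$ enters there through the integrability of $(1+{\sum}''x_j)^{-d'}$ over $\R^{d''}_+$, which is the one point where your plan and the paper's argument coincide.
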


\begin{lem} \label{uno}
If $d'=1$ then $S^{D'}$ is of restricted weak type $(p_0,p_0)$.
\end{lem}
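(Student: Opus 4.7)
The plan is to exploit the special structure that emerges when $d'=1$. Assuming without loss of generality that $D' = \{1\}$, the algebraic identity $-1/p_1 + (2/p_1 - 1) = -1/p_0$ collapses the prefactors in the definition of $S^{D'}$ to
$$
S^{D'}f(x_1, x'') = x_1^{-1/p_0} \int_0^\infty y_1^{-1/p_1} \exp\bigl(-c\sqrt{\sigma(x'')\, y_1}\,\bigr)\, M''f(y_1, x'')\, dy_1,
$$
where $\sigma(x'') = 1 + \sum'' x_j$. I would write $S^{D'} = U \circ M''$, with $U$ the \emph{linear} integral operator obtained by replacing $M''f$ above by a generic $g(y_1, x'')$. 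The restricted weak type of $S^{D'}$ will then follow from the corresponding property of $U$ combined with the boundedness of $M''$ on $L^{p_0, 1}(\R^d_+)$; the latter is a routine Marcinkiewicz consequence of $M''$ being bounded on $L^p(\R^d_+)$ for $1 < p < \infty$.

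The heart of the argument is proving that the linear $U$ is of restricted weak type $(p_0, p_0)$. For $E \subset \R^d_+$ of finite measure let $E^{x''} = \{y_1 > 0 : (y_1, x'') \in E\}$. Since the weight $y_1 \mapsto y_1^{-1/p_1} e^{-c\sqrt{\sigma y_1}}$ is decreasing in $y_1$, the Hardy--Littlewood rearrangement inequality gives
$$
U\chi_E(x_1, x'') \le x_1^{-1/p_0} \int_0^{|E^{x''}|} y_1^{-1/p_1} e^{-c\sqrt{\sigma y_1}}\, dy_1.
$$
A substitution $u = \sqrt{\sigma y_1}$ followed by splitting at $u=1$ yields the sharp bound
$$
\int_0^L y_1^{-1/p_1} e^{-c\sqrt{\sigma y_1}}\, dy_1 \lesssim \min\bigl(L^{1/p_0},\, \sigma^{-1/p_0}\bigr), \qquad L > 0.
$$
Because $U\chi_E$ is a pure power of $x_1$ times something depending only on $x''$, the $x_1$-distribution function is explicit, and integrating in $x''$ gives
$$
\big|\{U\chi_E > \lambda\}\big| \le \lambda^{-p_0} \int_{\R^{d-1}_+} \min\bigl(|E^{x''}|,\, \sigma(x'')^{-1}\bigr)\, dx'' \le \lambda^{-p_0}\, |E|,
$$
the last step via $\min(a, b) \le a$ and Fubini.

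To finish, I would invoke the classical principle that a linear operator of restricted weak type $(p_0, p_0)$ is bounded from $L^{p_0, 1}$ to $L^{p_0, \infty}$; applied to $U$ this gives $\|Ug\|_{L^{p_0, \infty}} \lesssim \|g\|_{L^{p_0, 1}}$ for every $g$. Taking $g = M''\chi_E$ and using $\|M''\chi_E\|_{L^{p_0, 1}} \lesssim \|\chi_E\|_{L^{p_0, 1}} \simeq |E|^{1/p_0}$ yields the required restricted weak type of $S^{D'}$. The main potential obstacle is the restricted weak type for $U$: the crude pointwise bound $U\chi_E \le Cx_1^{-1/p_0}\sigma^{-1/p_0}$ alone produces the divergent integral $\int \sigma(x'')^{-1} dx''$ in every dimension $d \ge 2$, so retaining the competing bound $|E^{x''}|^{1/p_0}$ from rearrangement (and then using Fubini on the $\min$) is essential.
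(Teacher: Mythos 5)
Your proof is correct, but it takes a genuinely different route from the paper's. The paper disposes of this lemma in one sentence by specializing the dyadic argument from the proof of Lemma \ref{dos}: there $M''\chi_E$ is decomposed as $M''\chi_E \le \sum_{k\ge 1}2^{-k}\chi_{E_k}$ with $E_k=\{M''\chi_E>2^{-k}\}$, each piece is handled via Lemma \ref{drear} and Lemma \ref{psi}, and the weak type $(1+\varepsilon,1+\varepsilon)$ bound $|E_k|\lesssim 2^{(1+\varepsilon)k}|E|$ makes the resulting geometric series converge after choosing $\varepsilon$ small. You instead factor $S^{D'} = U\circ M''$, establish the restricted weak type of the linear operator $U$ directly by a rearrangement argument in $y_1$ followed by Fubini on the $\min$ (you correctly flag that the bound $\sigma^{-1/p_0}$ alone would give a divergent $x''$-integral in every dimension, so retaining the competing $|E^{x''}|^{1/p_0}$ bound is essential), upgrade this to $L^{p_0,1}\to L^{p_0,\infty}$ boundedness via the Stein--Weiss equivalence for linear operators, and compose with the $L^{p_0,1}$-boundedness of $M''$ obtained by Marcinkiewicz-type interpolation. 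Your route is in fact the very technique the paper itself uses to prove item (a3) at the start of Section \ref{sec:p0}, so it is fully in the spirit of the paper; it is arguably cleaner here since no $\varepsilon$-juggling is needed. The reason the authors instead reuse the Lemma \ref{dos} machinery is presumably uniformity: that machinery is indispensable when $d'\ge 2$ because the logarithmic factor cannot be produced by a composition $L^{p_0,1}\to L^{p_0,\infty}$ argument, so it is natural to let the $d'=1$ case fall out of the same calculation with the logarithm trivially absent.
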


These results together cover all possible choices of $D'$ for $d \le 3$, so Theorem \ref{pos_p0}
follows once we prove Lemmas \ref{tres}-\ref{uno}. The remaining possibilities for $D'$ are
described by the inequalities $2 \le d' \le d''$. In the proof of Theorem \ref{neg_p0} 
given later, we shall see that $S^{D'}$ then cannot be controlled in a similar manner.

For the proofs of Lemmas \ref{tres}-\ref{uno}, we need two more preparatory results. We introduce 
a notation for the product of the first two factors in the expression for $S^{D'}f$, defining
$$
\psi_d(x) = \bigg(\sum_{j=1}^d x_j\bigg)^{({2}\slash{p_1}-1)d} \; \prod_{j=1}^d x_j^{-1\slash p_1},
	\qquad x \in \R^d_+.
$$

\begin{lem} \label{psi}
The function $\psi_d$ belongs to $L^{p_0,\infty}(\R^d_+)$.
\end{lem}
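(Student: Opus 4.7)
The plan is to exploit the positive homogeneity of $\psi_d$ and reduce the problem to the finiteness of a Dirichlet-type integral on the standard simplex.

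First I would observe that a direct computation, using $1/p_0 + 1/p_1 = 1$, shows that
$$
\psi_d(tx) = t^{(2/p_1-1)d - d/p_1} \psi_d(x) = t^{-d/p_0} \psi_d(x), \qquad t > 0.
$$
This homogeneity immediately gives the scaling identity $\{\psi_d > \lambda\} = \lambda^{-p_0/d}\{\psi_d > 1\}$, and hence
$$
\lambda^{p_0}\,\big|\{\psi_d > \lambda\}\big| = \big|\{\psi_d > 1\}\big|
$$
for every $\lambda > 0$. Thus the claim $\psi_d \in L^{p_0,\infty}(\R^d_+)$ reduces to showing that the single level set $\{\psi_d > 1\}$ has finite Lebesgue measure.

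Next I would introduce polar-like coordinates adapted to the $\ell^1$-norm, writing $x = s u$ with $s = \sum_{j=1}^d x_j > 0$ and $u$ in the standard simplex $\Delta = \{u \in \R^d_+ : \sum_j u_j = 1\}$, so that $dx = s^{d-1}\, ds\, d\sigma(u)$ for an appropriate surface measure $d\sigma$ on $\Delta$. Recalling that $-1/p_1 = a/2$ and that $(2/p_1 - 1)d - d/p_1 = -d/p_0$, one finds
$$
\psi_d(su) = s^{-d/p_0} \prod_{j=1}^d u_j^{a/2}.
$$
Hence $\psi_d(su) > 1$ is equivalent to $s < \bigl(\prod_j u_j^{a/2}\bigr)^{p_0/d}$, and integrating in $s$ first gives
$$
\big|\{\psi_d > 1\}\big| = \frac{1}{d}\int_{\Delta} \prod_{j=1}^d u_j^{ap_0/2}\, d\sigma(u).
$$

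The main point is then to verify the convergence of this Dirichlet-type integral. Since $ap_0/2 = a/(a+2)$ and $-1 < a < 0$, one checks that
$$
\frac{a}{a+2} > -1 \iff a > -1,
$$
which holds by assumption. Thus each singular factor $u_j^{a/(a+2)}$ has an integrable singularity along the facet $\{u_j = 0\}$, and because the singularities occur in distinct variables, the full product is integrable over $\Delta$ (by Fubini/the standard Dirichlet integral formula on the simplex). This yields $|\{\psi_d > 1\}| < \infty$ and completes the proof.

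The only mildly delicate step is keeping the exponents straight and making the change of variables carefully; once the homogeneity identity is in place, there is no real obstacle.
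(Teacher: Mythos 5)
Your proof is correct, and it takes a genuinely different route from the paper's. The paper proves the lemma by a dyadic decomposition of the angular variable: after normalizing $x_1 = \max_j x_j$, it splits $\R^d_+$ into sectors $S_k$ where $x_j/x_1 \simeq 2^{-k_j}$, shows $\psi_d \simeq 2^{\sum k_j/p_1}|x|^{-d/p_0}$ on $S_k$, bounds $\lambda^{p_0}|\{\chi_{S_k}\psi_d > \lambda\}| \lesssim 2^{-\sum k_j}\, 2^{\sum k_j p_0/p_1}$ using the aperture $\simeq 2^{-\sum k_j}$ of $S_k$, and sums the geometric series (convergent because $p_0 < p_1$). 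You instead exploit the exact homogeneity $\psi_d(tx) = t^{-d/p_0}\psi_d(x)$ to reduce everything to the finiteness of a single level set, which in simplex coordinates becomes a Dirichlet integral $\int_\Delta \prod_j u_j^{a/(a+2)}\,d\sigma$, convergent since $a/(a+2) > -1$. Your argument is shorter and conceptually cleaner, replacing the dyadic bookkeeping with a one-shot scaling identity plus a classical convergence criterion; the paper's version is more elementary in that it avoids coordinates on the simplex and keeps the whole estimate in the weak-$L^{p_0}$ language used elsewhere in the proof of Lemma 5.3, where the same sectorial picture recurs. The only point worth double-checking in your write-up is the Jacobian normalization $dx = s^{d-1}\,ds\,d\sigma(u)$; this is standard and correct, but since the exponent $s^{d-1}$ is what produces the crucial factor $p_0$ (rather than $p_0/d$) in the final Dirichlet exponent $ap_0/2 = a/(a+2)$, it is the one place where a slip would change the answer.
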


\begin{proof}
We may assume for symmetry reasons that $x_1 = \max_{1\le j \le d} x_j$. 
When $x_j \simeq x_1$ for all $j$, one easily finds that
$\psi_d(x) \simeq x_1^{-d\slash p_0} \simeq |x|^{-d\slash p_0}$,
and the function $x \mapsto |x|^{-d\slash p_0}$ is in $L^{p_0,\infty}(\R^d_+)$.
And when $x$ is in the sector $S_k$ defined by
$2^{-k_j-1} < x_j\slash x_1 \le 2^{-k_j}$, for $k_j \ge 0$, $j=2,\ldots,d$,
$$
\psi_d(x) \simeq \bigg( \prod_{j=2}^d 2^{k_j\slash p_1} \bigg) x_1^{-d\slash p_0}
	\simeq 2^{\sum k_j \slash p_1} |x|^{-d\slash p_0}.
$$
The sector $S_k$ has aperture comparable to $2^{-\sum k_j}$, and it is easy to check that 
$$
\lambda^{p_0} |\{\chi_{S_k}\psi_d > \lambda\}|
\lesssim 2^{-\sum k_j} \, 2^{\sum k_j p_0 \slash p_1},
$$
uniformly in $\lambda>0$ and in the $k_j$. Since $p_0<p_1$, these estimates can be summed over 
$(k_2,\ldots,k_d) \in \N^{d-1}$ to give the desired conclusion.
\end{proof}

The lemma below provides an inequality for decreasing rearrangements related to
Lemma \ref{level} (c).

\begin{lem} \label{drear}
Let $\gamma,\sigma >0$. The function
$$
F_{\sigma}(x) =
	\bigg( \prod_{j=1}^d x_j \bigg)^{-1\slash p_1} \exp\bigg( -
	\Big( \sigma \sum_{j=1}^d x_j \Big)^{\gamma} \bigg), \qquad x \in \R^d_+,
$$
has a decreasing rearrangement which satisfies
$$	
F_{\sigma}^*(s) \le C_{\gamma} \,
		s^{-1\slash p_1} 
		\bigg[\log\bigg( 2+  \frac{1}{\sigma^{d}s} \bigg)\bigg]^{(d-1)\slash {p_1}},
		\qquad s > 0,
$$
for some $C_{\gamma} < \infty$.
\end{lem}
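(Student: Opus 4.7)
The plan is to reduce the assertion to an application of Lemma \ref{level}(c), via the standard equivalence $F_\sigma^*(s) \le \lambda \iff |\{F_\sigma > \lambda\}| \le s$. First I would rewrite the level set condition $F_\sigma(x) > \lambda$ as
\[
\prod_{j=1}^d x_j^{-1} \exp\Big(-p_1\Big(\sigma\sum_{j=1}^d x_j\Big)^{\gamma}\Big) > \lambda^{p_1},
\]
and absorb the factor $p_1$ into the inner exponent by replacing $\sigma$ with $\sigma' = p_1^{1/\gamma}\sigma$. Lemma \ref{level}(c) then gives the distribution function bound
\[
|\{F_\sigma > \lambda\}| \le \frac{C_\gamma}{\lambda^{p_1}}\bigl[\log\bigl(2 + \sigma^{-d}\lambda^{p_1}\bigr)\bigr]^{d-1}, \qquad \lambda>0.
\]

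Next I would invert this bound. Given $s>0$, I set
\[
\lambda^{p_1} = K\, s^{-1}\bigl[\log\bigl(2 + \sigma^{-d} s^{-1}\bigr)\bigr]^{d-1}
\]
for a constant $K$ to be chosen, and I need to verify that this $\lambda$ satisfies $|\{F_\sigma > \lambda\}| \le s$, i.e.\ that
\[
s\lambda^{p_1} \ge C_\gamma \bigl[\log\bigl(2+\sigma^{-d}\lambda^{p_1}\bigr)\bigr]^{d-1}.
\]
Writing $u = \sigma^{-d}/s$, this amounts to verifying
\[
\log\bigl(2+ Ku[\log(2+u)]^{d-1}\bigr) \lesssim \log(2+u),
\]
with a constant independent of $u$. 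The ratio is clearly bounded for $u\le 1$ since both sides are $\simeq 1$. For $u>1$, the quantity $Ku[\log(2+u)]^{d-1}$ is at most $u^{1+\epsilon}$ for large $u$, so its logarithm is bounded by a constant multiple of $\log(2+u)$. Once this comparability is established, choosing $K$ sufficiently large (depending on $C_\gamma$, $d$ and the implicit constant) yields the required inequality, and hence
\[
F_\sigma^*(s) \le \lambda = K^{1/p_1}\, s^{-1/p_1}\bigl[\log\bigl(2+\sigma^{-d}s^{-1}\bigr)\bigr]^{(d-1)/p_1}.
\]

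There is no real obstacle here — the heart of the argument is already in Lemma \ref{level}(c), and what remains is a routine logarithmic inversion. The only point requiring a little care is keeping track of how the constants depend on $\gamma$ (via the rescaling $\sigma \mapsto p_1^{1/\gamma}\sigma$ and the standard bound $\log(2 + A\log^N(2+u)) \lesssim_{A,N} \log(2+u)$), but this is quantitative bookkeeping rather than a conceptual difficulty.
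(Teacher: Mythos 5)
Your proposal is correct and mirrors the paper's proof: both invert the distribution estimate from Lemma~\ref{level}(c) via the infimum characterization of $F_\sigma^*$, and both reduce the check to the elementary fact that $\log\bigl(2 + Ku[\log(2+u)]^{d-1}\bigr) \lesssim_{K,d} \log(2+u)$. The paper implements the last step via $\log(2+xy) < \log(2+x) + \log y$, which is only a cosmetic variant of your argument.
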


\begin{proof}
Since
$$
F_{\sigma}^{*}(s) = \inf\big\{ \lambda > 0 : |\{x : F_{\sigma}(x)>\lambda\}| 
	\le s \big\}, \qquad s >0,
$$
we need only verify that $|\{x : F_{\sigma}(x)>\lambda\}| \le s$ for
$$
\lambda = C_{\gamma} \,
		s^{-1\slash p_1} 
		\bigg[\log\Big( 2+  \frac{1}{\sigma^{d}s} \Big)\bigg]^{(d-1)\slash {p_1}}
$$
with some suitably large $C_{\gamma}$. But Lemma \ref{level} (c) implies that
$$
|\{x : F_{\sigma}(x)>\lambda\}| \le \widetilde{C}_{\gamma} \lambda^{-p_1}
	\bigg[ \log\Big( 2+ \frac{\lambda^{p_1}}{\sigma^d} \Big) \bigg]^{d-1}
$$
for $\lambda >0$ and some $\widetilde{C}_{\gamma}$.
For the value of $\lambda$ just indicated, we thus get
\begin{align*}
& |\{x : F_{\sigma}(x)>\lambda\}| \\
& \le \widetilde{C}_{\gamma} \, C_{\gamma}^{-p_1}
	s \bigg[\log\Big( 2+  \frac{1}{\sigma^{d}s} \Big)\bigg]^{1-d}
		\bigg[ \log\bigg( 2 + C_{\gamma}^{p_1} \frac{1}{\sigma^d s}
			\bigg[\log\Big( 2+  \frac{1}{\sigma^{d}s} \Big)\bigg]^{d-1}
		\bigg)\bigg]^{d-1}.
\end{align*}
Applying the elementary inequality
$$
\log(2+xy) < \log(2 + x) + \log y, 
	\qquad x>0, \quad y > 1,
$$
to the second logarithm above, we conclude
$$
|\{x : F_{\sigma}(x)>\lambda\}| \le 
 \widetilde{C}_{\gamma} \, C_{\gamma}^{-p_1} s
	\left( \frac{ \log\left( 2 + \frac{1}{\sigma^{d}s} \right)
		+ \log C_{\gamma}^{p_1} 
		+ (d-1)\log \log\left( 2+  \frac{1}{\sigma^{d}s} \right)}
	{\log\left( 2+  \frac{1}{\sigma^{d}s} \right)}	
	\right)^{d-1}.
$$
The right-hand side here will clearly be less than $s$ if we choose $C_{\gamma}$
large enough, uniformly in $s$, which finishes the proof.
\end{proof}

\begin{proof}[Proof of Lemma \ref{tres}]
Now $d'=d$ and $S^{D'}\chi_E$ has the form
$$
S^{D'}\chi_E(x) = \psi_d(x) \int \prod y_j^{-1\slash p_1} 
	\exp\Big( -\sqrt{c \sum y_j} \, \Big) \chi_E(y)\, dy.
$$
With the aid of Lemma \ref{drear} taken with $\sigma =c$ and $\gamma=1\slash 2$, we get
\begin{align*}
S^{D'}\chi_E(x) & \le
\psi_d(x) \int_0^{|E|} 
	F_{\sigma}^*(s) \, ds \\
& \lesssim
	\psi_d(x) \int_0^{|E|} s^{-1\slash p_1} \bigg[\log\Big(2 + 
	\frac{1}{s}\Big)\bigg]^{(d-1)\slash p_1} \, ds \\
& \simeq \psi_d(x) |E|^{1\slash p_0} 
	\bigg[\log\Big(2 + \frac{1}{|E|}\Big)\bigg]^{(d-1)\slash p_1}.
\end{align*}
Lemma \ref{psi} then implies
$$
\big|\big\{S^{D'}\chi_E > \lambda\big\}\big| \lesssim \frac{|E|}{\lambda^{p_0}}
 \bigg[\log\Big(2+\frac{1}{|E|} \Big)\bigg]^{\frac{p_0}{p_1}(d-1)}, \qquad \lambda >0,
$$
as desired.
\end{proof}

\begin{proof}[Proof of Lemma \ref{dos}]
Let for $E \subset \R^d_+$ of finite measure
\begin{align*}
U^{D'}\chi_E(x) & = \big[ \psi_{d'}(x')\big]^{-1} S^{D'}\chi_E(x) \\
& =  \int {\prod}' y_j^{-1\slash p_1} 
	\exp\bigg(-c\sqrt{\Big(1+{\sum}'' x_j\Big)\, {\sum}' y_j}\;\bigg) 
		M'' \chi_E(y',x'')\, dy'.
\end{align*}
Here
$$
M'' \chi_E \le \sum_{k\ge 1} 2^{-k} \chi_{E_k},
$$
where
$$
E_k = \{M''\chi_E > 2^{-k}\} \subset \R^d_+.
$$
Given any $\varepsilon >0$, the operator $M''$ is of strong and hence weak type
$(1+\varepsilon,1+\varepsilon)$ in $\R^d_+$, so that
\begin{equation} \label{iEk}
|E_k| \lesssim \frac{1}{2^{-k(1+\varepsilon)}} \int \chi_{E}^{1+\varepsilon} 
	= 2^{(1+\varepsilon)k} |E|.
\end{equation}
With $x''$ fixed we define the slice
$$
E_k^{x''} = \{x': (x',x'') \in E_k\} \subset \R^{d'}_+.
$$
Observe that
\begin{equation} \label{dec}
U^{D'}\chi_E \lesssim \sum_{k \ge 1} 2^{-k} U_k^{D'} \chi_E,
\end{equation}
where
$$
U_k^{D'} \chi_E(x'') = \int_{E_k^{x''}} {\prod}' y_j^{-1\slash p_1}
	\exp\bigg(-c\sqrt{\Big(1+{\sum}'' x_j\Big)\, {\sum}' y_j }\;\bigg) \, dy'.
$$
Applying Lemma \ref{drear} in dimension $d'$, with $\sigma \simeq 1+{\sum}'' x_j$, we get
\begin{align*}
U_k^{D'} \chi_E(x'') 
& \lesssim \int_0^{|E_k^{x''}|} s^{-1\slash p_1} \bigg[\log\bigg( 
	2+ \Big(1+{\sum}'' x_j\Big)^{-d'} s^{-1}\bigg)\bigg]^{(d'-1)\slash p_1} ds\\
& \simeq |E_k^{x''}|^{1\slash p_0} \bigg[\log\bigg( 
	2+ \Big(1+{\sum}'' x_j\Big)^{-d'} {|E_k^{x''}|^{-1}}\bigg)\bigg]^{(d'-1)\slash p_1}.
\end{align*}

Since by Lemma \ref{psi} applied in $\R^{d'}_+$
$$
\big|\big\{x' \in \R_+^{d'} : \psi_{d'}(x') U^{D'}_k \chi_E(x'') > \lambda\big\}\big|
	 \lesssim \frac{1}{\lambda^{p_0}} \big( U^{D'}_{k}\chi_E(x'') \big)^{p_0},
$$
this implies
\begin{align*}
&\big|\big\{x : \psi_{d'}(x') U^{D'}_k \chi_E(x'') > \lambda\big\}\big| \\
&	\lesssim \frac{1}{\lambda^{p_0}}
	\int_{\R^{d''}_+} |E_k^{x''}| \bigg[\log\bigg( 2+ \Big( 
	1+ {\sum}'' x_j\Big)^{-d'} |E_k^{x''}|^{-1}\bigg)\bigg]^{\frac{p_0}{p_1}(d'-1)}  dx''.
\end{align*}
We claim that the right-hand side above is controlled, in the sense of $\lesssim$, by
$$
\frac{|E_k|}{\lambda^{p_0}} \bigg[\log\Big( 2+\frac{1}{|E_k|}\Big)\bigg]^{\frac{p_0}{p_1}(d'-1)}.
$$
Since $\int |E_k^{x''}|\ dx'' = |E_k|$,
we see that this is true for that part of the integral taken over those $x''$ which satisfy
$$
\Big( 1+{\sum}'' x_j \Big)^{-d'} |E_k^{x''}|^{-1} \le |E_k|^{-1}.
$$
For the remaining $x''$ we use the fact that the function 
$s [\log(2+s^{-1})]^{(d'-1)p_0\slash p_1}$ is essentially increasing in $s>0$. 
Taking $s = (1+{\sum}'' x_j)^{d'} |E_k^{x''}|< |E_k|$ we get an estimate by
$$
C \frac{|E_k|}{\lambda^{p_0}} \bigg[\log\Big( 2+\frac{1}{|E_k|}\Big)\bigg]^{\frac{p_0}{p_1}(d'-1)} 
	\int\Big(1+{\sum}'' x_j \Big)^{-d'}\, dx'',
$$
and the integral here is finite since $d'>d''$. The claim is justified, and it follows that
\begin{equation*}
\big|\big\{x : \psi_{d'}(x') U^{D'}_k \chi_E(x'') > \lambda\big\}\big| \lesssim
\frac{|E_k|}{\lambda^{p_0}} \bigg[\log\Big( 2+\frac{1}{|E_k|}\Big)\bigg]^{\frac{p_0}{p_1}(d'-1)}.
\end{equation*}

Now, in view of \eqref{dec}, with $\varepsilon > 0$ fixed, one gets
\begin{align*}
\big|\big\{x: \psi_{d'}(x') U^{D'}\chi_E(x) > \lambda\big\}\big| 
	& \le \sum_{k \ge 1} \big|\big\{x: 2^{-k}\psi_{d'}(x')U^{D'}_k \chi_E(x'')
		> c 2^{-\varepsilon k} \lambda\big\}\big| \\
& \lesssim \sum_{k \ge 1} 2^{-(1-\varepsilon)p_0 k} \frac{|E_k|}{\lambda^{p_0}} 
	\bigg[\log\Big( 2+\frac{1}{|E_k|}\Big)\bigg]^{\frac{p_0}{p_1}(d'-1)}.
\end{align*}
Using \eqref{iEk} and again the monotonicity of 
$s[\log(2+s^{-1})]^{(d'-1)p_0\slash p_1}$, 
we see that the last expression is controlled, in the sense of $\lesssim$, by
$$
\sum_{k \ge 1} 2^{-k[(1-\varepsilon)p_0-(1+\varepsilon)]} \frac{|E|}{\lambda^{p_0}} 
	\bigg[\log\bigg( 2+\frac{2^{-k(1+\varepsilon)}}{|E|}\bigg)\bigg]^{\frac{p_0}{p_1}(d'-1)}.
$$
Here we estimate the numerator in the argument of the logarithm by $1$ and sum the
geometric series. Choosing $\varepsilon$ small enough, we conclude that
$$
\big|\big\{x: S^{D'}\chi_E(x)> \lambda\big\}\big| \lesssim
\frac{|E|}{\lambda^{p_0}} 
	\bigg[\log\Big( 2+\frac{1}{|E|}\Big)\bigg]^{\frac{p_0}{p_1}(d'-1)}, \qquad \lambda > 0,
$$
which is precisely the desired estimate. The proof of Lemma \ref{dos} is complete.
\end{proof}

\begin{proof}[Proof of Lemma \ref{uno}]
The case $d'=1$ is essentially contained in the reasoning proving Lemma \ref{dos}.
Now, however, the situation is much simpler since no logarithmic factors appear.
\end{proof}

\subsection{The case of two minimal $\alpha_i$ in dimension $3$} \qquad \\
Without any loss of generality, we may assume that $\alpha=(a,a,b)$
with $-1<a<0$ and $a<b$.
\begin{thm} \label{pos_p0_s}
For $d=3$ and $\alpha$ as above, the operator $\M$ maps $L^{p_0,1}\log^{1\slash p_1}L$ into
$L^{p_0,\infty}$, in the sense that for all $E \subset \R^3_+$ of finite measure 
$$
\big|\{\M \chi_E > \lambda \}\big| \le C \frac{|E|}{\lambda^{p_0}}
								\bigg[ \log\bigg(2+ \frac{1}{|E|}\bigg) 
									\bigg]^{\frac{p_0}{p_1}}, \qquad \lambda > 0.
$$
\end{thm}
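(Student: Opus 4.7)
The plan is to adapt the dyadic scheme used in the proof of Lemma~\ref{lpos_s} from the $p_1$ to the $p_0$ endpoint. First I would apply the kernel estimate~\eqref{main} in each of the three variables to decompose $\int \mathcal{H}_t^{(a,a,b)}(x,y)\,\chi_E(y)\,dy$ into $2^3 = 8$ contributions indexed by subsets $D' \subset \{1,2,3\}$, where $j \in D'$ signals that the second (singular) term in~\eqref{main} is used for the $j$th variable. For each $D' \neq \{1,2,3\}$, the inequality~\eqref{pqw} allows us to bound the third-variable factor by its $a$-analogue, reducing it to the all-minimal case of Section~\ref{ssec:allmin} with $d' \le 2$; Lemmas~\ref{dos} and~\ref{uno} then deliver the desired bound with logarithmic exponent at most $\frac{p_0}{p_1}(d'-1) \le \frac{p_0}{p_1}$.

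Thus only the case $D' = \{1,2,3\}$ requires a direct argument. Denoting its associated maximal operator by $H_*$, with kernel
$$
H_t(x,y) = \frac{(x_1 x_2 y_1 y_2)^{a\slash 2}}{t^{2(a+1)}}\,\frac{(x_3 y_3)^{b\slash 2}}{t^{b+1}}\,\exp\Big({-c\sum_{j=1}^{3}\frac{x_j+y_j}{t}}\Big),
$$
I would follow the dyadic decomposition of Lemma~\ref{lpos_s}: restrict the supremum to $t = 2^{-k}$, $k \ge 0$, and split the third variable via $x_3 \sim 2^{-k-\nu}$, $y_3 \sim 2^{-k-\beta}$ for $\nu, \beta \in \Z$. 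This yields $H_*\chi_E \lesssim \sum_{\nu,\beta \in \Z}\sum_{k\ge 0} H_{k,\nu,\beta}\chi_E$ with the same coefficient structure as in~\eqref{bs}, carrying the factor $e^{-c(2^{-\nu}+2^{-\beta})}\,2^{-(\nu+\beta)b\slash 2}$.

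The crucial deviation from the $p_1$ analysis is that instead of H\"older's inequality, which was used there to pull out $\|f\|_{p_1}$, I would estimate the inner integral $\int_{y_3 \sim 2^{-k-\beta}}\int_{\R^2_+} (y_1 y_2)^{a\slash 2}e^{-c 2^k(y_1+y_2)}\chi_E(y)\,dy_1\, dy_2\, dy_3$ by invoking Lemma~\ref{drear} in $\R^2_+$ (with $\sigma \simeq 2^k$, $\gamma = 1$) slice-by-slice in $y_3$. Combined with the Hardy--Littlewood rearrangement inequality, this bounds the 2D inner integral against $\chi_{E^{y_3}}$ by $|E^{y_3}|^{1\slash p_0}[\log(2+2^{-2k}\slash |E^{y_3}|)]^{1\slash p_1}$, where $E^{y_3}$ is the $2$-dimensional slice, and integration in $y_3$ will yield an analogous quantity involving $|A_{k,\beta}| := |E \cap \{y_3 \sim 2^{-k-\beta}\}|$. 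The measure of the resulting level set in $(x_1,x_2)$ at fixed $x_3 \sim 2^{-k-\nu}$ is then controlled via Lemma~\ref{level}(c) in $\R^2_+$, producing a single further logarithmic factor.

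Finally, summation over $k$ (whose $x_3$-supports are disjoint for fixed $\nu$) and then over $\nu, \beta \in \Z$ (exploiting the exponential decay $2^{-\varepsilon(|\nu|+|\beta|)p_0\slash 2}$ with $\varepsilon = (b-a)\slash 2 > 0$) should complete the proof. The main obstacle will be the delicate bookkeeping of logarithmic factors: as in the dichotomy~\eqref{crst} of Lemma~\ref{lpos_s}, the sum in $k$ must be split according to whether $|A_{k,\beta}|$ lies above or below a natural threshold of the form $2^{-3k}|E|$, and the essential monotonicity of $s\,[\log(2+s^{-1})]^{p_0\slash p_1}$ (already exploited in the proofs of Lemmas~\ref{tres} and~\ref{dos}) must combine with the $y_3$-integration to yield exactly the log-exponent $\frac{p_0}{p_1}$ claimed in the statement, rather than the $2\,\frac{p_0}{p_1}$ that the crude reduction via~\eqref{pqw} to the all-$a$ case would produce.
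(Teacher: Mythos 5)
Your skeleton is right — reducing to $D'=\{1,2,3\}$ via \eqref{pqw} and Lemmas \ref{dos}--\ref{uno}, the dyadic $k,\nu,\beta$ split, and applying Lemma \ref{drear} to the $(y_1,y_2)$ slices of $E$ — but there is a genuine gap in the $x$-side of the argument. You propose to estimate the level set in $(x_1,x_2)$ at fixed $x_3\sim 2^{-k-\nu}$ with Lemma \ref{level}(c). That lemma gives a level-set bound of size $\nu^{-1}\log(\cdots)$, which, after unravelling the exponent $-1/p_1$, produces a $\lambda^{-p_1}$ decay. Since the $y$-side produces roughly a factor $|E|^{1/p_0}$ (via Lemma \ref{drear} and H\"older), you would end up with $|E|^{p_1/p_0}/\lambda^{p_1}$, which is neither comparable to nor dominated by the desired $|E|/\lambda^{p_0}$. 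Lemma \ref{level}(c) is exactly the right tool at the $p_1$ endpoint (as in Lemma \ref{lpos_s}), but not here.

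The paper's proof of Lemma \ref{l_pos_p0_s} handles the $x$-variables differently: one first converts the exponential $\exp(-c2^k\sum x_j)$ into the algebraic factor $\big[2^k(x_1+x_2+x_3)\big]^{-3(a+1)}$, and writes $x_3^{b/2}=x_3^{a/2}\cdot 2^{-(k+\nu)(b-a)/2}$, so that the entire $x$-dependence collapses to $\psi_3(x)=\big(\sum x_j\big)^{(2/p_1-1)\cdot 3}\prod x_j^{-1/p_1}$ times $k,\nu$-dependent constants. Lemma \ref{psi} then says $\psi_3\in L^{p_0,\infty}(\R^3_+)$, which is precisely the weak-$L^{p_0}$ decay needed to convert the $|E|^{1/p_0}$ coming from the $y$-integration into $|E|/\lambda^{p_0}$. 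The extra power of $\sum x_j$ is the whole point — it is what boosts $\prod x_j^{-1/p_1}$ from weak-$L^{p_1}$ to weak-$L^{p_0}$ — and this is the same mechanism already at work in the proofs of Lemmas \ref{tres} and \ref{dos} that you cite. You should replace your appeal to Lemma \ref{level}(c) with this $\psi_3$ device; the rest of your outline (the slice-wise Lemma \ref{drear}, the split in $k$ by comparing to $|E|$ and using monotonicity of $s[\log(2+s^{-1})]^{p_0/p_1}$, and the geometric summation in $\nu,\beta$ using $\varepsilon=(b-a)/2>0$) then goes through as in the paper.
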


The proof of Theorem \ref{pos_p0_s} goes along similar lines to that of Theorem \ref{pos_s}.
When one estimates $\mathcal{H}_t^{(a,a,b)}(x,y)$, a sum of $8$ terms emerges;
as before, we index these terms by subsets $D' \subset \{1,2,3\}$ and let primed
variables correspond to the second term in \eqref{main}. Then only the term corresponding 
to $D'=\{1,2,3\}$ requires further analysis since, in view of \eqref{pqw},
the cases when $d'<3$ are covered by Lemmas \ref{dos} and \ref{uno}. 
Thus the task reduces to proving the following (we use $H_*$ and some other notations 
introduced in Section \ref{ssec:2m3}).
\begin{lem} \label{l_pos_p0_s}
For all $E \subset \R^3_+$ of finite measure, the distribution
function of $H_{*} \chi_E$ satisfies
$$
\big|\{H_* \chi_E > \lambda \}\big| \le C \frac{|E|}{\lambda^{p_0}}
								\bigg[ \log\bigg(2+ \frac{1}{|E|}\bigg) 
									\bigg]^{\frac{p_0}{p_1}}, \qquad \lambda > 0.
$$
\end{lem}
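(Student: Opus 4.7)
The plan is to adapt the dyadic decomposition strategy from the proof of Lemma \ref{lpos_s} in Section \ref{ssec:2m3}, but at the endpoint $p_0$ using the restricted weak-type tools developed in Section \ref{ssec:allmin}. Discretizing $t \simeq 2^{-k}$ for $k \ge 0$ and splitting both $x_3 \sim 2^{-k-\nu}$ and $y_3 \sim 2^{-k-\beta}$ for $\nu, \beta \in \Z$ gives
$$
H_{*}\chi_E(x) \lesssim \sum_{\nu,\beta \in \Z} \sum_{k \ge 0} H_{k,\nu,\beta}\chi_E(x),
$$
with each $H_{k,\nu,\beta}\chi_E$ controlled by \eqref{bs}. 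This expresses the integrand as a product of an $x$-side factor $\chi_{\{x_3 \sim 2^{-k-\nu}\}}(x_1 x_2)^{-1/p_1} e^{-c 2^k(x_1+x_2)}$, a $y$-integral $I_{k,\beta}(E) := \int_{E \cap \{y_3 \sim 2^{-k-\beta}\}}(y_1 y_2)^{-1/p_1} e^{-c 2^k(y_1+y_2)}\, dy$, and a constant prefactor $C_{k,\nu,\beta}$.

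To bound $I_{k,\beta}(E)$, I would perform the $y_3$ integration first and reduce matters to a Hardy--Littlewood estimate in the $(y_1,y_2)$ plane against the function $B(y_1,y_2) := |\{y_3 \sim 2^{-k-\beta} : (y_1,y_2,y_3) \in E\}|$, which satisfies $\|B\|_\infty \le 2^{-k-\beta}$ and $\|B\|_1 = m_{k,\beta} := |E \cap \{y_3 \sim 2^{-k-\beta}\}|$. Lemma \ref{drear} in dimension $2$ with $\sigma \simeq 2^k$ and $\gamma=1$ gives the decreasing rearrangement of the kernel $(y_1 y_2)^{-1/p_1} e^{-c 2^k(y_1+y_2)}$, and splitting the rearrangement integral at $s_0 = m_{k,\beta}\,2^{k+\beta}$ yields
$$
I_{k,\beta}(E) \lesssim 2^{-(k+\beta)/p_1}\, m_{k,\beta}^{1/p_0}\, \bigl[\log\bigl(2 + 2^{-3k-\beta}/m_{k,\beta}\bigr)\bigr]^{1/p_1}.
$$
For the $x$-factor, the elementary bound $e^{-c 2^k(x_1+x_2)} \lesssim (2^k(x_1+x_2))^{-2/p_0}$ dominates it by $2^{-2k/p_0} \psi_2(x_1,x_2) \chi_{\{x_3 \sim 2^{-k-\nu}\}}$, so Lemma \ref{psi} in dimension $2$ together with Fubini gives the three-dimensional estimate
$$
\bigl|\bigl\{x : H_{k,\nu,\beta}\chi_E(x) > \lambda \bigr\}\bigr| \lesssim \frac{2^{-3k-\nu}}{\lambda^{p_0}}\, \bigl(C_{k,\nu,\beta}\, I_{k,\beta}(E)\bigr)^{p_0}.
$$

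Substituting and collecting powers of $2^k$, a direct check using the identity $p_0/p_1 = p_0 - 1$ shows that the $k$-dependent prefactor decays geometrically. Summation in $k$ is then handled by the same case split as in the proof of Lemma \ref{lpos_s}: when $m_{k,\beta} \gtrsim 2^{-3k-\beta}|E|$ the logarithm collapses to $\log(2+1/|E|)$ directly; otherwise the essential monotonicity of $s \mapsto s\log^{p_0/p_1}(2+s^{-1})$ absorbs the deficit and the geometric series in $k$ still converges. The outer sums in $\nu,\beta$ are made convergent by the factor $\exp(-c(2^{-\nu}+2^{-\beta}))$ combined with an additional $2^{-\varepsilon(|\nu|+|\beta|)}$, $\varepsilon = (b-a)/2 > 0$, arising from the fact that $b>a$, exactly as in Section \ref{ssec:2m3}.

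The main obstacle I anticipate is the bookkeeping of the logarithmic factor through the nested dyadic summations so that it appears in the final estimate with the sharp exponent $p_0/p_1$ and no larger. The rearrangement bound from Lemma \ref{drear} contributes a log-power $1/p_1$ at the intermediate step, which turns into $p_0/p_1$ precisely when passing to the level-set estimate via Lemma \ref{psi}. Ensuring that neither the $k$-summation nor the $\nu,\beta$-summations inflate this exponent requires the case split to be performed \emph{before} summing in $k$, and requires enough geometric decay in the prefactor to dominate the slowly-varying logarithmic factor in each regime.
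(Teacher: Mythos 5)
Your overall strategy --- the dyadic decomposition $H_*\lesssim\sum_{\nu,\beta}\sum_k H_{k,\nu,\beta}$ with the extra exponential decay in $(\nu,\beta)$ coming from $b>a$, the rearrangement bound for the $y$-integral via Lemma \ref{drear}, the level-set estimate via Lemma \ref{psi}, and the case split in $k$ --- is the paper's strategy, and the genuinely different details (estimating the $x$-side via $\psi_2$ on $(x_1,x_2)$ times the indicator in $x_3$ rather than via $\psi_3$ in all three variables; integrating out $y_3$ first into $B(y_1,y_2)$ and doing one Hardy--Littlewood rearrangement, rather than slicing in $y_3$, applying Lemma \ref{drear} in two dimensions per slice, and using H\"older) are valid alternatives. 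However, there is a concrete error in your $x$-side bound that breaks the argument as written.

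You claim that
$e^{-c\,2^k(x_1+x_2)}\lesssim\big(2^k(x_1+x_2)\big)^{-2/p_0}$
dominates the factor $(x_1x_2)^{-1/p_1}e^{-c2^k(x_1+x_2)}$ by $2^{-2k/p_0}\psi_2(x_1,x_2)$. That identification is wrong. By definition,
$\psi_2(x_1,x_2)=(x_1+x_2)^{2(2/p_1-1)}(x_1x_2)^{-1/p_1}$,
and $2(2/p_1-1)=4/p_1-2$, whereas $-2/p_0=2/p_1-2$. These exponents differ by $2/p_1\neq0$. With your choice $\gamma=2/p_0$, the function you actually produce is
$(x_1+x_2)^{-2/p_0}(x_1x_2)^{-1/p_1}$,
which near the diagonal $x_1=x_2=r$ behaves like $r^{-2/p_0-2/p_1}=r^{-2}$; this is not in $L^{p_0,\infty}(\R_+^2)$ (the borderline in two dimensions is $r^{-2/p_0}$, and $2>2/p_0$ since $p_0>1$). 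So Lemma \ref{psi} does not apply to the majorant you wrote, and your level-set estimate does not follow from it. The correct bound $e^{-c\xi}\lesssim\xi^{-(2-4/p_1)}$ gives $2^{-k(2-4/p_1)}\psi_2(x_1,x_2)$, which is indeed in $L^{p_0,\infty}$.

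A downstream consequence is that your claim of geometric decay in $k$ is spurious: it is an artifact of the oversized exponent $2/p_0$. With the corrected exponent $2-4/p_1$, if you collect the powers of $2^k$ coming from $|\{x_3\sim 2^{-k-\nu}\}|$, the $\psi_2$-normalization, the prefactor of \eqref{bs}, and the $2^{-(k+\beta)/p_1}$ in your $I_{k,\beta}$ estimate, they cancel exactly (using $p_0/p_1=p_0-1$); the $k$-sum is not controlled by a geometric series but entirely by the case split plus the essential monotonicity of $s\mapsto s[\log(2+s^{-1})]^{p_0/p_1}$, exactly as in the paper's version via $\psi_3$. Your sketch of the case split is correct and the $\nu,\beta$-summation via the factor $2^{-\varepsilon(|\nu|+|\beta|)}e^{-c2^{-\beta}}$ works, so the argument is repairable, but as written the $\psi_2$ identification and the geometric-decay step are both wrong.
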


\begin{proof}
Recall from the proof of Lemma \ref{lpos_s} that
$$
H_* \chi_E(x) \lesssim \sum_{\nu,\beta \in \Z} \sum_{k \ge 0} H_{k,\nu,\beta} \chi_E(x),
	\qquad x \in \R^3_+.
$$
As a slight modification of \eqref{bs}, we now have
\begin{align*}
H_{k,\nu,\beta} \chi_E(x) \lesssim & \, 2^{(2a+3)k} 2^{-(\nu+\beta)b\slash 2}
	\exp\big(-c(2^{-\nu}+2^{-\beta})\big) (x_1 x_2 x_3)^{a\slash 2} 
	2^{(k+\nu)a\slash 2} \\ & \cdot \big[ 2^k (x_1+x_2+x_3)\big]^{-3(a+1)}
		\int_{y_3 \sim 2^{-k-\beta}} (y_1 y_2)^{a\slash 2} \exp\big(-c 2^k (y_1+y_2)\big)
			\chi_{E}(y)\, dy,
\end{align*}
where we used an exponential to get the factor preceding the integral. Therefore, denoting 
the last integral by $I(k,\beta,E)$ and invoking the function 
$\psi_d$ from Lemma \ref{psi}, we have
\begin{equation} \label{Hee}
H_{k,\nu,\beta} \chi_E(x) \lesssim 2^{-ka\slash 2} 
	2^{-(\nu+\beta)b \slash 2 +\nu a\slash 2} \exp\big(-c(2^{-\nu}+2^{-\beta})\big)
	\psi_3(x) I(k,\beta,E).
\end{equation}

Now we estimate $I(k,\beta,E)$. Given $y_3$, we introduce the slice 
$E^{y_3} = \{ (y_1,y_2) \in \R^2_+ : (y_1,y_2,y_3) \in E\}$. 
An application of Lemma \ref{drear} in dimension $d=2$ leads to
\begin{align*}
I(k,\beta,E) & = \int_{y_3 \sim 2^{-k-\beta}} dy_3 \int_{E^{y_3}}
	(y_1 y_2)^{a\slash 2} \exp\big(-c 2^k (y_1+y_2)\big) \, dy_1 dy_2 \\
& \lesssim \int_{y_3 \sim 2^{-k-\beta}} dy_3 \int_0^{|E^{y_3}|}
	\frac{1}{s^{1\slash p_1}} \bigg[\log\Big( 2+2^{-2k}\frac{1}{s}\Big)\bigg]^{1\slash p_1} \, ds \\
& \simeq \int_{y_3 \sim 2^{-k-\beta}} |E^{y_3}|^{1\slash p_0} 
	\bigg[\log\Big( 2+2^{-2k}\frac{1}{|E^{y_3}|}\Big)\bigg]^{1\slash p_1} \, dy_3.
\end{align*}
H\"older's inequality then implies
$$
I(k,\beta,E) \lesssim 2^{-(k+\beta)\slash p_1}
	\bigg( \int_{y_3 \sim 2^{-k-\beta}}
	|E^{y_3}| \bigg[\log\Big( 2+2^{-2k}\frac{1}{|E^{y_3}|}\Big)\bigg]^{p_0\slash p_1} dy_3 
		\bigg)^{1\slash p_0}.
$$

Combining the last estimate with \eqref{Hee} and simplifying the constant factors, we see that 
$$
H_{k,\nu,\beta} \chi_E(x) \lesssim 2^{-\varepsilon(|\nu|+|\beta|)}
	e^{-\delta 2^{-\beta}} \psi_3(x)
\bigg( \int_{y_3 \sim 2^{-k-\beta}}
	|E^{y_3}| \bigg[\log\Big( 2+2^{-2k}\frac{1}{|E^{y_3}|}\Big)\bigg]^{p_0\slash p_1} dy_3 
		\bigg)^{1\slash p_0},
$$
with $\varepsilon = (b-a)\slash 2 > 0$ and some $\delta>0$.
The support of $H_{k,\nu,\beta} \chi_E$ is contained in $\{x_3 \sim 2^{-k-\nu}\}$,
and this now allows us to estimate the level sets of $\sum_k H_{k,\nu,\beta} \chi_E$. Thus
\begin{align*}
& \Big|\Big\{x \in \R^3_+ : \sum_{k \ge 0} H_{k,\nu,\beta}\chi_E(x) > c 
	\lambda 2^{-\varepsilon (|\nu|+|\beta|)\slash 2} \Big\}\Big|  = 
	\sum_{k \ge 0} \big|\big\{H_{k,\nu,\beta}\chi_E(x) > c 
	\lambda 2^{-\varepsilon (|\nu|+|\beta|)\slash 2} \big\}\big| \le \\
& \sum_{k \ge 0} \bigg|\bigg\{ \psi_3(x) > c \lambda 
	2^{\varepsilon (|\nu|+|\beta|)\slash 2 + \delta 2^{-\beta}} 
	\bigg( \int_{y_3 \sim 2^{-k-\beta}} 
	|E^{y_3}| \bigg[\log\Big( 2+2^{-2k}\frac{1}{|E^{y_3}|}\Big)\bigg]^{p_0\slash p_1} dy_3 
		\bigg)^{-1\slash p_0} \bigg\}\bigg|.
\end{align*}
Since $\psi_3 \in L^{p_0,\infty}$ (Lemma \ref{psi}), this expression is controlled by
$$
2^{-\varepsilon (|\nu|+|\beta|)p_0\slash 2-\delta p_0 2^{-\beta}}
 \frac{1}{\lambda^{p_0}} \sum_{k\ge 0} \int_{y_3 \sim 2^{-k-\beta}}
	|E^{y_3}| \bigg[\log\Big( 2+2^{-2k}\frac{1}{|E^{y_3}|}\Big)\bigg]^{p_0\slash p_1} dy_3.	
$$

In order to estimate the above sum, we split the integral according to the condition
$2^{2k}|E^{y_3}|> |E|$ and then use the essential monotonicity of the function
$s [\log(2+s^{-1})]^{p_0\slash p_1}$, as before. This gives
\begin{align*}
& \int_{y_3 \sim 2^{-k-\beta}}
	|E^{y_3}| \bigg[\log\Big( 2+2^{-2k}\frac{1}{|E^{y_3}|}\Big)\bigg]^{p_0\slash p_1} dy_3 \\ 
& \lesssim\int_{y_3 \sim 2^{-k-\beta}} |E^{y_3}| \, dy_3 \;
	\bigg[\log\Big( 2+\frac{1}{|E|}\Big)\bigg]^{p_0\slash p_1} +
	\int_{y_3 \sim 2^{-k-\beta}} \, dy_3 \; 2^{-2k} |E|
	\bigg[\log\Big( 2+\frac{1}{|E|}\Big)\bigg]^{p_0\slash p_1}. 
\end{align*}
Summing the right-hand side here in $k \ge 0$ gives at most
$$
C \big(1 + 2^{-\beta}\big) |E| \bigg[\log\Big( 2+\frac{1}{|E|}\Big)\bigg]^{p_0\slash p_1}.
$$

Combining this with the previous considerations, we arrive at
$$
\Big|\Big\{x \in \R^3_+ : \sum_{k \ge 0} H_{k,\nu,\beta}\chi_E(x) > c 
	\lambda 2^{-\varepsilon (|\nu|+|\beta|)\slash 2} \Big\}\Big| \lesssim
	2^{-\varepsilon(|\nu|+|\beta|)p_0 \slash 2} 
	\frac{|E|}{\lambda^{p_0}} \bigg[\log\Big( 2+\frac{1}{|E|}\Big)\bigg]^{p_0\slash p_1}.
$$
Finally, the exponentially decreasing factor allows us to sum in $\nu$ and $\beta$:
\begin{align*}
|\{x \in \R^3_+ : H_{*}\chi_E(x) > \lambda\}| & \le \sum_{\nu,\beta \in \Z}
\Big|\Big\{x \in \R^3_+ : \sum_{k \ge 0} H_{k,\nu,\beta}\chi_E(x) > c 
	\lambda 2^{-\varepsilon (|\nu|+|\beta|)\slash 2} \Big\}\Big| \\
& \lesssim \frac{|E|}{\lambda^{p_0}} \bigg[\log\Big( 2+\frac{1}{|E|}\Big)\bigg]^{p_0\slash p_1}.
\end{align*}
The desired estimate follows, and the proof of Lemma \ref{l_pos_p0_s} is complete.
\end{proof}

\subsection{Counterexamples} \qquad \\
Assume now that $d\ge 4$ and that there are at least two minimal values $\alpha_i$. 
We shall construct sets $E$ to prove the negative part of Theorem \ref{main_thm} (b3). 
Here we may replace $T_*^{\alpha}$ by $\M$, since only $t \le 1$ will be taken into account.
For the sake of clarity, we state the result separately.

\begin{thm} \label{neg_p0}
Let $d \ge 4$ and $\alpha$ as above. There are no $C>0$ and $\gamma \in \R$ such that
$$
\big|\{\M \chi_E > \lambda \}\big| \le C \frac{|E|}{\lambda^{p_0}}
								\bigg[ \log\bigg(2+ \frac{1}{|E|}\bigg) 
									\bigg]^{\gamma}, \qquad \lambda > 0,
$$
holds for all $E \subset \R^d_+$ of finite measure.
\end{thm}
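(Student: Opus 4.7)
Our plan is to adapt the counterexample construction from the proof of Theorem \ref{neg} to the restricted-weak-type setting at $p_0$. Given arbitrary $C>0$ and $\gamma\in\R$, the goal is to produce a set $E\subset\R_+^d$ of finite measure and a level $\lambda>0$ violating the claimed inequality. As in Theorem \ref{neg}, we may assume that all $\alpha_i$ equal the minimum value $a\in(-1,0)$, since any non-minimal coordinates can be absorbed into the ``large'' block. Split $\{1,\ldots,d\}=D'\cup D''$ with $d':=|D'|\ge 2$; for $d\ge 5$ we further arrange $d'\le d''$, while for $d=4$ we take $d'=d''=2$ together with a free scaling coordinate, analogous to the $d=4$ case of Theorem \ref{neg}.

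For the single-scale set
\[
\widehat E_t=\bigl\{y\in\R_+^d : y_j\in(t,2t),\ j\in D';\ y_j\in(t^{-1},2t^{-1}),\ j\in D''\bigr\},
\]
Lemma \ref{lower}(b) applied in the $D'$-coordinates and Lemma \ref{lower}(a) applied in the $D''$-coordinates produce the pointwise lower bound
\[
\M\chi_{\widehat E_t}(x)\gtrsim t^{d''/p_1}\prod_{j\in D'}x_j^{-1/p_1}
\]
on a region of $x$ chosen exactly as in the proof of Theorem \ref{neg}. Lemma \ref{level}(b) in the $D'$-variables, combined with integration in the $D''$-variables, then yields the level-set bound
\[
|\{\M\chi_{\widehat E_t}>\lambda\}|\gtrsim \lambda^{-p_1}\bigl[\log(2+t^{d'-d''}\lambda^{p_1})\bigr]^{d'-1},
\]
valid for $\lambda\ge t^{(d''-d')/p_1}$. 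To defeat a prescribed $\gamma$, we superpose these constructions over many scales: choose a rapidly decreasing sequence $t_k\to 0$ for which the sets $\widehat E_{t_k}$ and the associated regions supporting the pointwise bound are pairwise disjoint, which is automatic because the small and large coordinate values live at entirely different dyadic annuli for different $k$. Setting $E=\bigsqcup_k \widehat E_{t_k}$, the disjointness yields $|E|=\sum_k|\widehat E_{t_k}|$ together with
\[
|\{\M\chi_E>\lambda\}|\ge\sum_k|\{\M\chi_{\widehat E_{t_k}}>\lambda\}|.
\]
Tuning the sequence $(t_k)$ and the level $\lambda$ against $(C,\gamma)$, the accumulated logarithmic gain eventually dominates the factor $[\log(2+1/|E|)]^\gamma$, which remains bounded under the construction; this produces the required violation. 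For $d=4$ the single-scale set $\widehat E_t$ is insufficient, and one instead works with an $E_R$-type union-over-scales construction exactly as in the $d=4$ case of Theorem \ref{neg}, so that a free scaling coordinate supplies the missing logarithmic factor.

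The main obstacle is the parameter tuning in the superposition. Unlike Theorem \ref{neg}, where the conclusion $|\{\M f>\lambda\}|=\infty$ is uniform in $\lambda$, here the level sets of the individual pieces $\widehat E_{t_k}$ give only a bounded ratio $|\{\M\chi_{\widehat E_{t_k}}>\lambda\}|\lambda^{p_0}/|\widehat E_{t_k}|$ at any single scale. Beating an arbitrary $\gamma$ thus requires the union-over-scales mechanism to compound enough logarithmic excess, and this compounding must be balanced carefully against the growth of $|E|$. As in Theorem \ref{neg}, the dimension $d=4$ is the delicate borderline case in which the multi-scale $E_R$-type refinement is indispensable.
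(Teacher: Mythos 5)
The building block you take from Theorem \ref{neg} is the wrong one, and this is a genuine gap, not a matter of tuning. For the box set $\widehat E_t=\{y: y_j\in(t,2t)\ (j\in D'),\ y_j\in(t^{-1},2t^{-1})\ (j\in D'')\}$, the correct pointwise lower bound coming from Lemma \ref{lower} is $\M\chi_{\widehat E_t}(x)\gtrsim t^{d'/p_1}\prod_{j\in D'}x_j^{-1/p_1}$ (you wrote $t^{d''/p_1}$, which is the bound for the \emph{normalized} function $f_t=t^{(d''-d')/p_1}\chi_{\widehat E_t}$, not for $\chi_{\widehat E_t}$), and Lemma \ref{level}(b) in the $x'$-box $(0,t)^{d'}$ together with integration in $x''$ then gives $|\{\M\chi_{\widehat E_t}>\lambda\}|\gtrsim\lambda^{-p_1}\,t^{d'-d''}\,[\log(2+\lambda^{p_1})]^{d'-1}$ for $\lambda\gtrsim 1$ (again differing from your formula, which has $t^{d'-d''}$ inside the logarithm). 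Since $|\widehat E_t|\simeq t^{d'-d''}$, the resulting ratio
$$
\frac{\lambda^{p_0}\,|\{\M\chi_{\widehat E_t}>\lambda\}|}{|\widehat E_t|\,[\log(2+1/|\widehat E_t|)]^{\gamma}}
\gtrsim \lambda^{p_0-p_1}\,[\log(2+\lambda^{p_1})]^{d'-1}
$$
is \emph{bounded} in $\lambda\ge 1$ (and the $|\widehat E_t|$-dependence has cancelled). In other words, the box set satisfies the restricted weak-type $(p_0,p_0)$ inequality at every scale — it is exactly at threshold, with no excess to accumulate. Your superposition $E=\bigsqcup_k\widehat E_{t_k}$ therefore fails: with $|E|=\sum_k|\widehat E_{t_k}|$ and $|\{\M\chi_E>\lambda\}|\ge\sum_k|\{\M\chi_{\widehat E_{t_k}}>\lambda\}|$, the factor $t_k^{d'-d''}$ again cancels between numerator and denominator, and the quotient is still $\lesssim\lambda^{p_0-p_1}[\log(2+\lambda^{p_1})]^{d'-1}$. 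There is no ``accumulated logarithmic gain'' to harvest.

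The idea that is missing is that the counterexample at $p_0$ must be built from a different single-scale set, one that is cut by a \emph{level condition} in the $D'$-variables. The paper takes
$$
E_t=\Big\{y\in\R_+^d:\ \prod_{j\in D'}y_j^{-1/p_1}>\beta,\ y_j<t\ (j\in D'),\ t^{-1}<y_j<2t^{-1}\ (j\in D'')\Big\},
$$
so that the kernel weight $\prod_{j\in D'}y_j^{-1/p_1}$ is uniformly $\ge\beta$ on $E_t$ while $|E_t|\simeq\beta^{-p_1}[\log(2+t^{d'}\beta^{p_1})]^{d'-1}t^{-d''}$ by Lemma \ref{level}(a),(b). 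This concentration of mass where the kernel is large makes $\M\chi_{E_t}$ comparable in size to what $\M$ applied to the whole box would give, while $|E_t|$ is much smaller than $t^{d'-d''}$. Concretely, one gets $\lambda^{p_0}|\{\M\chi_{E_t}>\lambda\}|\gtrsim|E_t|^{p_0}$ for a suitable $\lambda$, hence the ratio against $|E_t|[\log(2+1/|E_t|)]^{\gamma}$ is $\gtrsim|E_t|^{p_0-1}[\log(2+1/|E_t|)]^{-\gamma}$; choosing $\beta=t^{-d''/p_1}$ makes $|E_t|\to\infty$ as $t\to0$, so this blows up regardless of $\gamma$. This handles $d\ge5$. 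For $d=4$ (where $d'=d''$, so $|E_t|$ no longer grows), the paper unions the \emph{level-set-cut} pieces $E_{2^{-j}}$ over $j=2,\dots,N$ with the $N$-dependent choice $\beta=N^{1/p_1}t^{-d'/p_1}$; it is not the $E_R$-type continuous construction from Theorem \ref{neg}, and the box sets would again be at threshold. Your proposal, built only on the box pieces, cannot produce the required violation.
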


We will prove the theorem in the case when all $\alpha_i$ are minimal, $\alpha_i = a \in (-1,0)$.
The same reasoning works in the general case, if we include the variables corresponding
to non-minimal $\alpha_i$ among the double-primed variables below. 

\begin{proof}[Proof of Theorem \ref{neg_p0}]
Assume first that $d \ge 5$ and choose $d'$ so that $2\le d' < d''$.
We shall find a family of sets $E$ disproving the estimate stated in Theorem \ref{neg_p0}. 

Given a parameter $\beta >0$ (specified in a moment), consider the set
\begin{equation} \label{fEt}
E_t = \Big\{ y \in \R^d_+ : {\prod}' y_j^{-1\slash p_1} > \beta, \;\; y_j < t \;\; 
\textrm{for} \;\;
	j \le d', \;\; t^{-1} < y_j < 2t^{-1} \;\; \textrm{for} \;\; j>d' \Big\}.
\end{equation}
By Lemma \ref{level} (a) and (b), applied in $\R^{d'}_+$, we obtain
\begin{equation} \label{Emes}
|E_t| \simeq \beta^{-p_1} \big[\log(2+t^{d'}\beta^{p_1})\big]^{d'-1} t^{-d''}, \qquad t^{d'} \beta^{p_1}>1.
\end{equation}
Next we estimate $\M \chi_{E_t}(x)$. We consider $x$ such that $x_j < t$ for $j \le d'$
and $t^{-1} < x_j < 2t^{-1}$ for $j>d'$. Further, below we restrict the integration to the 
set of points $y \in E_t$ with $|y_j-x_j|<1$ for $j>d'$, so that in view of items
(a) and (b) of Lemma \ref{lower}, for $t \le 1\slash 4$, 
$$
{\prod}' \mathcal{H}_t^a(x_i,y_i) \gtrsim \Big({\prod}' x_i^{-1\slash p_1}\Big)
	\Big({\prod}' y_i^{-1\slash p_1}\Big) t^{(2\slash p_1 -1)d'} \quad \textrm{and} \quad 
{\prod}'' \mathcal{H}_t^a(x_i,y_i) \gtrsim 1.
$$
Thus
\begin{align} \nonumber
\M \chi_{E_t}(x) & \gtrsim \Big({\prod}' x_i^{-1\slash p_1}\Big) t^{(2\slash p_1 -1)d'}
	\int_{E_t \cap \{|y_j-x_j|<1 \; \textrm{for}\; j>d'\}} 
	{\prod}' y_i^{-1\slash p_1} \, dy \\
& \gtrsim t^{-d'\slash p_1} t^{(2\slash p_1 -1)d'} \beta
	\big| E_t \cap \{ |y_j-x_j| < 1 \; \textrm{for} \; j > d'\}\big|
	\nonumber \\
& \simeq t^{d''-d'\slash p_0} \beta |E_t|. \label{crss}
\end{align}
Consequently, taking $\lambda \simeq t^{d''-d'\slash p_0} \beta |E_t|$ below, we get
$$
\lambda |\{\M \chi_{E_t}> \lambda \}|^{1\slash p_0} \gtrsim
	t^{d''-d'\slash p_0} \beta |E_t| t^{d'\slash p_0} t^{-d''\slash p_0} 
	= t^{d''\slash p_1} \beta |E_t|.
$$
Then, with the choice $\beta = t^{-d''\slash p_1}$, we have 
$$
\lambda^{p_0} |\{\M \chi_{E_t}> \lambda \}|
\bigg(|E_t| \bigg[\log\Big(2+\frac{1}{|E_t|}\Big)\bigg]^{\gamma} \,\bigg)^{-1}
\gtrsim |E_t|^{p_0\slash p_1} \bigg[\log\Big(2+\frac{1}{|E_t|}\Big)\bigg]^{-\gamma}.
$$
But the last expression tends to $\infty$ as $t\to 0^+$ since, by \eqref{Emes}, $|E_t| \to \infty$
as $t \to 0^+$. This finishes the case $d''>d'\ge 2$.

When $d'=d''\ge 2$ and in particular when $d=4$, the construction is a little more
complicated, as in Section \ref{cex_p1}. Let $E_{2^{-j}}$ be the set given by \eqref{fEt} 
with $t=2^{-j}$ and $\beta = N^{1\slash p_1} t^{-d'\slash p_1}$. Then define
$$
F_N = \bigcup_{j=2}^N E_{2^{-j}},
$$
which is a disjoint sum. From \eqref{Emes} it follows that
$$
|F_N| = \sum_{j=2}^{N} |E_{2^{-j}}| \simeq \sum_{j=2}^N N^{-1} \big[\log(2+N)\big]^{d'-1} 
	\simeq \big[\log(2+N)\big]^{d'-1}.
$$
Next, we use \eqref{crss} to estimate $\M \chi_{F_N}(x)$ when
$x\in (2^{-j-1},2^{-j})^{d'}\times (2^j,2^{j+1})^{d'}$ and $j=2,\ldots,N$ 
(notice that each of these sets has measure essentially $1$), obtaining
$$
\M \chi_{F_N}(x) \gtrsim \M \chi_{E_{2^{-j}}}(x)  \gtrsim (2^{-j})^{d'-d'\slash p_0} 
	N^{1\slash p_1} (2^{-j})^{-d'\slash p_1} |E_{2^{-j}}| \simeq N^{-1\slash p_0} [\log(2+N)]^{d'-1}.
$$
Thus, choosing $\lambda \simeq N^{-1\slash p_0}[\log(2+N)]^{d'-1}$ below,
$$
\lambda^{p_0} |\{\M \chi_{F_N}>\lambda\}| \gtrsim \big[\log(2+N)\big]^{(d'-1)p_0}.
$$
Since we also have
$$
|F_N| \bigg[\log\Big(2+\frac{1}{|F_N|}\Big)\bigg]^{\gamma} \lesssim \big[\log(2+N)\big]^{d'-1},
$$
we conclude that
\begin{align*}
\lambda^{p_0} |\{\M \chi_{F_N}>\lambda\}|\bigg(|F_N|
 \bigg[\log\Big(2+\frac{1}{|F_N|}\Big)\bigg]^{\gamma}\, \bigg)^{-1} 
	& \gtrsim \frac{[\log(2+N)]^{(d'-1)p_0}}{[\log(2+N)]^{d'-1}} \\
	& = \big[\log(2+N)\big]^{(d'-1)p_0\slash p_1},
\end{align*}
and the last expression tends to $\infty$ as $N\to \infty$. The proof is finished.
\end{proof}

\subsection{Comment on sharpness} \label{sharp_P0}
Recall that the space $L^{p_0,1}\log^{(\widetilde{d}(\alpha)-1)\slash p_1} L$ 
is defined by the inequality
$$
\int_0^{\infty} f^*(s) s^{1\slash p_0} \bigg[\log \Big(2+\frac{1}{s}\Big)
 \bigg]^{{(\widetilde{d}(\alpha)-1)}\slash {p_1}}\, \frac{ds}{s} < \infty.
$$
In Theorem \ref{main_thm} (a3) and (b3), this space is best possible in the sense
of convergence at $0$ of the above integral. Indeed, if a space $X$ of measurable functions 
in $\R^d_+$ is invariant under rearrangement and contains a function $g$ with 
\begin{equation} \label{nsn}
\int_0^{1} g^*(s) s^{1\slash p_0} \bigg[\log\Big(2+\frac{1}{s}\Big)
	\bigg]^{(\widetilde{d}(\alpha)-1) \slash p_1}\, \frac{ds}{s} = \infty,
\end{equation}
then $X$ also contains a function $f$ with
$$
T^{\alpha}_* f = T^{\alpha}_1 f = 
	\int \mathcal{H}^{\alpha}_{1\slash 2}(\cdot,y) f(y)\, dy = \infty 
$$
on a set of positive measure. We shall verify this fact in the case when all $\alpha_i$ 
are minimal, so that $\widetilde{d}(\alpha)=d$. 
The general case requires only minor modifications; see Section \ref{sharp_p1}.

First, from Lemma \ref{lower} (b) we see that it is enough to find an $f \in X$ such that 
$$
\int_{(0,1)^d} f(y) \prod_{j=1}^d y_j^{-1\slash p_1} \, dy = \infty.
$$
Let $\Psi(y) = \prod_1^d y_j^{-1\slash p_1}$ for $y \in (0,1)^d$.
Lemma \ref{level} (b) says that the level sets 
$\{y \in (0,1)^d : \Psi(y)> \lambda\}$ of $\Psi$ have measures at least
$c \lambda^{-p_1} [\log(2+\lambda^{p_1})]^{d-1}$ for large $\lambda$.
From this it is elementary to see that the decreasing rearrangement $\Psi^*$ satisfies
$$
\Psi^{*}(s) \ge c s^{-1\slash p_1} \big[\log(2+s^{-1})\big]^{(d-1)\slash p_1}
$$
for small $s>0$. Our assumption \eqref{nsn} thus means that $\int g^*(s) \Psi^*(s)\, ds = \infty$,
and $g \in X$. But for a suitable rearrangement $f$ of $g$, in the sense that $f^{*} = g^{*}$, 
one can achieve equality in the classical inequality
$$
\int_{(0,1)^d} f(y) \Psi(y) \, dy \le \int_0^1 f^{*}(s)\Psi^{*}(s) \,ds,
$$
already mentioned in the proof of Proposition \ref{estimate}.
This implies that the left-hand integral here diverges, as desired.

The above sharpness remark shows, in particular, that $T^{\alpha}_*$
is not of weak type $(p_0,p_0)$, even if there is only one minimal $\alpha_i$.

%*******************************************************************************************

\end{document}